\documentclass[11pt]{article}

\usepackage{amsmath,amsfonts,amsthm,amssymb,verbatim,graphicx,subfigure,color,fullpage,multirow}
\usepackage{pgf,tikz,pgfplots} 

\newtheorem{lemma}{Lemma}
\newtheorem{theorem}{Theorem}
\newtheorem*{theorem*}{Theorem}
\newtheorem{proposition}{Proposition}
\newtheorem{corollary}{Corollary}

\newtheorem{conjecture}{Conjecture}

\newtheorem{observation}{Observation}
\newtheorem{example}{Example}


\title{On the diameter of Schrijver graphs\footnote{Partially supported by LIA
INFINIS/SINFIN (CNRS-CONICET-UBA, France--Argentine) and by the MathAmSud projet MATHAMSUD 20-MATH-09 (France - Argentina - Chile).} \footnote{A short version of this paper appears in Proc. of Lagos'21 symposium, Procedia Comp. Sci., 195:266-274,2021.}}

\author{Agustina Victoria Ledezma \footnote{Instituto de Matem\'atica Aplicada San Luis, Universidad Nacional de San Luis and CONICET, San Luis, Argentina. e-mail: agustinaledezma@gmail.com}
\and
Adri\'an Pastine \footnote{Instituto de Matem\'atica Aplicada San Luis, Universidad Nacional de San Luis and CONICET, San Luis, Argentina. e-mail: agpastine@gmail.com}
\and Pablo Torres \footnote{Depto. de Matem\'atica, Universidad Nacional de Rosario and CONICET, Rosario, Argentina. e-mail: ptorres@fceia.unr.edu.ar}
  \and Mario Valencia-Pabon\footnote{Universit\'e Sorbonne Paris Nord, LIPN, UMR7030, Villetaneuse, France. e-mail: valencia@lipn.univ-paris13.fr}}
\date{}

\begin{document}
\maketitle
\sloppy

\begin{abstract}
For $k \geq 1$ and $n \geq 2k$, the well known Kneser graph $\operatorname{KG}(n,k)$ has all $k$-element subsets of an $n$-element set as vertices; two such subsets are adjacent if they are disjoint.
Schrijver constructed a vertex-critical subgraph $\operatorname{SG}(n,k)$ of $\operatorname{KG}(n,k)$ with the same chromatic number. In this paper, we compute the diameter of the graph $\operatorname{SG}(2k+r,k)$ with $r \geq 1$. We obtain an exact value of the diameter of $\operatorname{SG}(2k+r,k)$ when $r \in \{1,2\}$ or when $r \geq k-3$. For the remained cases, when $3 \leq r \leq k-4$, we obtain that the diameter of $\operatorname{SG}(2k+r,k)$ belongs to the integer interval $[4..k-r-1]$.\\

\noindent {\bf Keywords}: Schrijver graphs, Diameter of graphs.
\end{abstract}

\section{Introduction}\label{sec:intro}
Let $G$ be a connected graph. Given two vertices $a,b \in G$, $\text{dist}(a,b)$, the {\it distance} between $a$ and $b$, is defined as the length of the shortest path in $G$ joining $a$ to $b$. The {\it diameter} of $G$, that we denote by $D(G)$, is defined as the maximum distance between any pair of vertices in $G$.

Let $[n]$ denote the set $\{1,\ldots,n\}$. For positive integers $n \geq 2k$, the Kneser graph $\operatorname{KG}(n,k)$ has as vertices the $k$-subsets of $[n]$ and two vertices are connected by an edge if they have empty intersection.  In a famous paper, Lov\'asz \cite{Lov78} showed that its chromatic number $\chi(\text{KG}(n,k))$ is equal to $n-2k+2$. After this result, Schrijver \cite{Sch78} proved that the chromatic number remains the same when we consider the subgraph $\operatorname{KG}(n,k)_{2-\operatorname{stab}}$ of $\operatorname{KG}(n,k)$ obtained by restricting the vertex set to the $k$-subsets that are {\em $2$-stable}, that is, that do not contain two consecutive elements of $[n]$ (where $1$ and $n$ are considered also to be consecutive). Schrijver \cite{Sch78} also proved that the $2$-stable Kneser graphs are {\em vertex critical} (or {\em $\chi$-critical}), i.e. the chromatic number of any proper subgraph of $\operatorname{KG}(n,k)_{2-\operatorname{stab}}$ is strictly less than $n-2k+2$; for this reason, the $2$-stable Kneser graphs are also 
 known as Schrijver graphs. From now on we will use throughout this paper the notation $\operatorname{SG}(n,k)$ to refer to the graph $\operatorname{KG}(n,k)_{2-\operatorname{stab}}$.  

After these general advances, a lot of work has been done concerning properties of Kneser graphs and stable Kneser graphs (see \cite{Braun10,Braun11,GodRoy01,KaSte20,KaSte22,Meun11,SiTa20,Torres15,T-V17,V-V05} and references therein). Concerning Kneser graphs, its diameter was computed in \cite{V-V05}. Moreover, it is known that the distance between two vertices in Kneser graphs $\operatorname{KG}(n,k)$ only depends on the cardinality of their intersection \cite{V-V05}. However, in the case of Schrijver graphs $\operatorname{SG}(n,k)$ this does not work in the same way. For example, note that in $\operatorname{SG}(10,4)$ the vertices $\{1,3,5,7\}$ and $\{1,3,6,8\}$ are at distance $3$, while $\{1,3,6,8\}$ and $\{1,4,6,9\}$ are at distance $2$. In this paper, we are interested in computing the diameter of Schrijver graphs. As far as we know this parameter has not been studied for such graphs. The main result of this paper is the following theorem:

\begin{theorem}
\label{teo-ppal}
Let $n,k,r$ be positive integers such that $n = 2k+r$. Then, the diameter of the Schrijver graph $\text{SG}(2k+r,k)$ verifies
$$D(\text{SG}(2k+r,k)) \left\{
        \begin{array}{lcl}
             = 2  &;&  \text{if }r \geq 2k-2\text{, with }k \geq 2 \text{ (Theorem \ref{cor:s2diam2})}\\[5pt]
             = 3  &;&  \text{if }k-2 \leq r \leq 2k-3\text{, with }k \geq 3 \text{ (Theorem \ref{th:diam3})}\\[5pt]
             = 4  &;& \text{if }r = k-3\text{, with }k \geq 5 \text{ (Corollary \ref{cor:3k-3})}\\[5pt]         
              \in [4..k-r+1] &;&  \text{if }3 \leq r \leq k-4\text{, with }k \geq 5 \text{ (Corollary \ref{cor:3k-3} and Theorem \ref{th:boundm+3})}\\[5pt]
             = \left\lfloor\frac{3k}{4}\right\rfloor + (k\hspace*{-0.3cm}\mod 2) &;& \text{if }r = 2\text{, with }k \geq 6 \text{ (Theorem \ref{th:2k+2})}\\[5pt]
             = k &;& \text{if }r = 1 \text{ (Observation \ref{rem:diamd2})}
         \end{array}
       \right.$$
 
\end{theorem}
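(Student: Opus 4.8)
The plan is to prove Theorem~\ref{teo-ppal} by establishing, separately for each range of $r$, matching lower and upper bounds on $D(\operatorname{SG}(2k+r,k))$; this is exactly the content of the six auxiliary results cited in the statement, so the real work splits into (i) a common distance-estimation toolkit and (ii) range-specific extremal constructions, and the main theorem is then just their assembly.

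First I would set up the basic distance tool. For distinct vertices $A,B$, $\operatorname{dist}(A,B)=1$ iff $A\cap B=\emptyset$, and $\operatorname{dist}(A,B)\le 2$ iff $A,B$ have a common neighbour, i.e. iff $[n]\setminus(A\cup B)$ contains a $2$-stable $k$-subset. Whether a subset $S\subsetneq[n]$ contains a $2$-stable $k$-subset is governed by its maximal cyclic runs: a run of $\ell$ consecutive elements supplies $\lceil \ell/2\rceil$ pairwise non-consecutive positions, so $S$ works iff $\sum_i\lceil \ell_i/2\rceil\ge k$ (with a wrap-around correction only when $S=[n]$). This criterion immediately drives the large-$r$ upper bounds: once $r$ is big enough that the complement of \emph{every} $A\cup B$ satisfies it we get $D\le 2$ (Theorem~\ref{cor:s2diam2}), and a one-step weaker version---always being able to interpolate a length-$3$ walk---gives $D\le 3$ (Theorem~\ref{th:diam3}). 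For the upper bound $D\le k-r+1$ on the tight range (Theorem~\ref{th:boundm+3}) I would build an explicit walk generalising the odd-cycle traversal that governs $r=1$: the $r-1$ surplus free positions beyond the bare minimum act as a shortcut, shaving about $r-1$ edges off the $\sim k$-edge path one is forced to use when $r=1$.

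Next, the lower bounds. Diameter $\ge 2$ is immediate for $k\ge 2$, since two $2$-stable $k$-sets may share an element. For the ``$=3$'' range I would exhibit a pair $A,B$ for which $[n]\setminus(A\cup B)$ is broken into runs of total $\lceil\cdot\rceil$-capacity below $k$, ruling out distance $2$, and then display an explicit $3$-path. The ``$\ge 4$'' bound (Corollary~\ref{cor:3k-3}) is the delicate step: one must produce $A,B$ admitting neither a common neighbour nor a length-$3$ connection $A\!-\!C\!-\!D\!-\!B$, which is a finite but intricate case analysis on how $C$ and $D$ can simultaneously avoid the two ``almost-full'' sets $A\cup C$ and $D\cup B$. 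For $r=1$ I would instead read off the global structure: $\operatorname{SG}(2k+1,k)$ has exactly $2k+1$ vertices, the double-gap pattern forces each vertex to have precisely two disjoint $2$-stable partners, so it is $2$-regular, and it is connected, whence $\operatorname{SG}(2k+1,k)\cong C_{2k+1}$ and the diameter equals $k$ (Observation~\ref{rem:diamd2}). For $r=2$ the graph is a $2$-stable necklace on $C_{2k+2}$ with a single surplus pair, and the exact value $\lfloor 3k/4\rfloor+(k\bmod 2)$ (Theorem~\ref{th:2k+2}) should come from a tailored count of how many disjointness-steps are forced to shift the two ``deficient'' positions around the cycle; I would track a position/parity potential and match it against an explicit extremal pair.

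The main obstacle is the band $3\le r\le k-4$, where only the interval $[4\,..\,k-r+1]$ is obtained. Here the easy lower bound stalls at $4$ while the constructive upper bound is $k-r+1$, and closing this gap would require either a substantially stronger lower-bound invariant---some monovariant on $2$-stable sets that decreases by at most a bounded amount along each edge yet separates a well-chosen extremal pair by more than $4$---or a cleverer path construction beating $k-r+1$. I do not expect the naive complement-capacity argument to settle this, since near $r\approx k/2$ the competing effects (ample room per step versus strong $2$-stability obstructions) are finely balanced; pinning down the true value in this regime is the crux and, consistent with the statement, is left open.
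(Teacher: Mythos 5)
Your high-level skeleton---assembling the theorem from six range-specific results, with a distance-$2$ criterion based on cyclic runs (your $\sum_i \lceil \ell_i/2\rceil \ge k$ is exactly the paper's Lemma~\ref{th:s2dist2})---matches the paper, and your treatment of $r\ge 2k-2$ and $r=1$ is sound. But the proposal has genuine gaps at precisely the steps that carry the paper's content. First, for $k-2\le r\le 2k-3$ you assert that $D\le 3$ follows from ``a one-step weaker version'' of the distance-$2$ criterion; this is the heart of the matter and is not routine. One must place two \emph{disjoint} $2$-stable $k$-sets $A'\subseteq [n]\setminus A$ and $B'\subseteq [n]\setminus B$ inside two overlapping complements, and the paper needs an entire block-decomposition machinery (ends, $A$-/$B$-/$H$-components, block types $I$--$IV$, and assignment rules $R1$--$R8$) to split the free elements between $A'$ and $B'$ without collision, which works exactly when $n\ge 3k-2$. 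Nothing in your sketch substitutes for this. Second, for $3\le r\le k-3$ your upper bound $k-r+1$ rests on an undeveloped ``shortcut walk''; the paper instead lifts $A,B$ into $\operatorname{SG}(n+p,k)$ by two insertion operations ($+$ and $\uparrow$) until they are at distance at most $3$ there, then pulls the connecting vertices back step by step, tracking how much intersections can grow at each pullback (each unit of growth costs two extra steps via $\operatorname{dist}(A,B)\le 1+2h$). Your walk idea is numerically consistent with $k-r+1$, but no argument is given that such a walk exists, and it is not clear it can be made to work directly.

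Third, the lower bound $D\ge 4$ for $2\le r\le k-3$ (Corollary~\ref{cor:3k-3}), which you defer to ``a finite but intricate case analysis,'' is left unproved, and with it the exact value $4$ at $r=k-3$ and the left endpoint of the interval $[4..k-r+1]$. The paper's argument is short once the right pair is chosen: for $A^n_k=\{1,3,5\}\cup\cdots$ and $B^n_k=\{1,3,6\}\cup\cdots$, $2$-stability forces every neighbour $A'$ of $A^n_k$ to contain at least three elements of $\{2,4,n-1,n\}$ and every neighbour $B'$ of $B^n_k$ at least three elements of $\{2,4,5,n\}$, whence $A'\cap B'\ne\emptyset$; no case analysis over intermediate vertices is needed. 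Fourth, for $r=2$ the exact value $\lfloor 3k/4\rfloor+(k\bmod 2)$ is obtained in the paper from a complete structural description of $\operatorname{SG}(2k+2,k)$---it is the Cartesian product $C_{2k+2}\,\square\,P_{\lfloor k/2\rfloor+1}$ modified by extra edges at one end and edges or identifications at the other---followed by explicit shortest-path estimates and an extremal pair; your ``position/parity potential'' is a heuristic, not a proof. In short: the scaffolding and the distance-$2$ tool are right, but the four substantive components (the distance-$3$ construction, the $k-r+1$ bound, the distance-$\ge 4$ pair, and the $r=2$ analysis) are missing or unsubstantiated.
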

The proof of Theorem \ref{teo-ppal} will follow from Observation \ref{rem:diamd2}, Corollary \ref{cor:3k-3} and Theorems \ref{cor:s2diam2}, \ref{th:diam3}, \ref{th:boundm+3} and \ref {th:2k+2} given in the next sections.

\section{Main results}
A subset $S \subseteq [n]$ is {\em $s$-stable} if any two of its elements are at least "at distance $s$ apart'' on the $n$-cycle, that is, if $s \leq |i-j| \leq n-s$ for distinct $i,j \in S$. For $s,k \geq 2$ and $n \geq ks$, the $s$-stable Kneser graph $\operatorname{KG}(n,k)_{s-\operatorname{stab}}$ is the subgraph of $\operatorname{KG}(n,k)$ obtained by restricting the vertex set of $\operatorname{KG}(n,k)$ to the $s$-stable $k$-subsets of $[n]$. 

In \cite{T-V17} it was shown (see Proposition 4.3 in \cite{T-V17}) that $\operatorname{KG}(ks+1,k)_{s-\operatorname{stab}}$ is isomorphic to the complement graph of the $(k-1)$th power of a cycle $C_{ks+1}$. Therefore, $\text{SG}(2k+1,k)$ is isomorphic to a cycle graph $C_{2k+1}$ and so, we have the following straightforward observation.

\begin{observation}\label{rem:diamd2}
$\operatorname{D}(\operatorname{SG}(2k+1,k)) =k$.
\end{observation}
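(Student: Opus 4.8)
The plan is to reduce the statement to the diameter of an odd cycle. The paragraph immediately preceding the statement already records, via Proposition 4.3 of \cite{T-V17} applied with $s=2$, that $\operatorname{SG}(2k+1,k)$ is isomorphic to the cycle $C_{2k+1}$. Taking this isomorphism as given, the entire content of the observation is the classical fact that the diameter of $C_{2k+1}$ equals $\lfloor (2k+1)/2 \rfloor = k$: in any $n$-cycle the distance between two vertices is exactly their cyclic distance, and the largest cyclic distance is $\lfloor n/2 \rfloor$, attained by a diametrically (almost) opposite pair. Substituting $n = 2k+1$ gives $\operatorname{D}(\operatorname{SG}(2k+1,k)) = k$.

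For completeness I would also indicate why the cited power-of-cycle description really collapses to a single cycle. By Proposition 4.3 of \cite{T-V17}, $\operatorname{SG}(2k+1,k)$ is the complement of the $(k-1)$th power of $C_{2k+1}$; since the cyclic distances occurring in $C_{2k+1}$ are exactly $1,2,\ldots,k$, this complement joins precisely those vertex pairs whose cyclic distance equals $k$, i.e.\ it sends each vertex $i$ to $i\pm k \pmod{2k+1}$. The resulting graph is $2$-regular, and it is connected because the map $i \mapsto i+k$ is a single $(2k+1)$-cycle on $\mathbb{Z}_{2k+1}$, using $\gcd(k,2k+1)=\gcd(k,1)=1$. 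A connected $2$-regular graph on $2k+1$ vertices is $C_{2k+1}$, confirming the isomorphism.

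There is essentially no genuine obstacle here, which is why the authors phrase the result as an observation: the only two points meriting a sentence are the coprimality argument ensuring the complement is one cycle rather than a disjoint union of shorter cycles, and the textbook value of the diameter of an odd cycle. Should one prefer a fully self-contained derivation that bypasses \cite{T-V17}, I would instead work directly inside $\operatorname{SG}(2k+1,k)$: its vertices are exactly the $2k+1$ maximum independent sets $S_j = \{\, j, j+2, \ldots, j+2(k-1)\,\} \pmod{2k+1}$ of the $(2k+1)$-cycle on $[2k+1]$, and a short check shows that $S_i$ and $S_j$ are disjoint, hence adjacent, precisely when $j \equiv i \pm 1 \pmod{2k+1}$. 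This again exhibits the structure $C_{2k+1}$ and the same diameter $k$.
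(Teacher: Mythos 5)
Your proposal is correct and follows essentially the same route as the paper: the paper also derives $\operatorname{SG}(2k+1,k)\cong C_{2k+1}$ from Proposition 4.3 of \cite{T-V17} and then reads off the diameter $k$ of the odd cycle, treating this as immediate. The extra details you supply (the coprimality argument showing the complement of the $(k-1)$th power is a single cycle, and the direct description via the sets $S_j$) are sound but are left implicit in the paper.
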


From now on, we assume that $n \geq 2k + 2$. We denote by $[n]_2$ the family of $2$-stable subsets of $[n]$ and by $[n]_2^k$ the family of $2$-stable $k$-subsets of $[n]$, i.e. $[n]_2^k=V\left(\operatorname{SG}(n,k)\right)$. 
We will always assume w.l.o.g. that any vertex $v=\{v_1,v_2,\ldots,v_k\}$ in $\operatorname{SG}(n,k)$ is such that $v_1 < v_2 < \ldots < v_k$. Arithmetic operations will be supposed modulo $n$ (being $0 \equiv n$).

\subsection{Distances between vertices}
Let $A=\{a_1,a_2,\dots,a_k\}$ and $B=\{b_1,b_2,\dots,b_k\}$ be two vertices in $\operatorname{SG}(n,k)$ such that $|A\cap B|=1$. W.l.o.g. we assume that $A\cap B=\{1\}$ and $a_2<b_2$. Note that $b_2\geq 4$. Let $X=\{2,b_2,b_3,\dots,b_k\}$ and $Y=X+1 = \{3,b_2+1,b_3+1,\dots,b_k+1\}$. It is not hard to see that the set of vertices $\{A,X,Y,B\}$ induce a $P_4$ or a paw (Figure \ref{P4andPaw}).

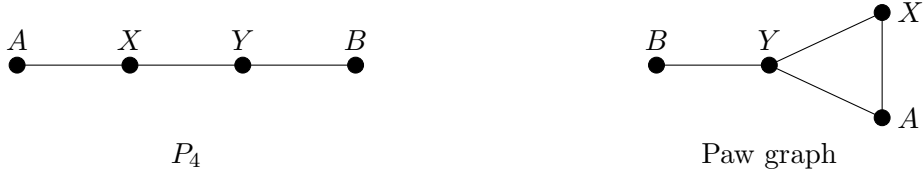
\begin{figure}[ht]
    \centering
    \begin{tikzpicture}
       
        \begin{scope}[shift={(-4,0)}]
        \draw (-3,0) -- (1.5,0);
        \filldraw (-3,0) circle (3pt) node[above, yshift=2] {$A$}; 
				\filldraw (-1.5,0) circle (3pt) node[above, yshift=2] {$X$};
				\filldraw (0,0) circle (3pt) node[above, yshift=2] {$Y$};
				\filldraw (1.5,0) circle (3pt) node[above, yshift=2] {$B$};
        \end{scope}
        \node at (-4.75,-1.2) {$P_4$};
        
        \begin{scope}[shift={(3,0)}]
        \draw (-1.5,0) -- (0,0);
        \draw (0,0) -- (1.5,0.7);
				\draw (0,0) -- (1.5,-0.7);
				\draw (1.5,0.7) -- (1.5,-0.7);
				\filldraw (-1.5,0) circle (3pt) node[above, yshift=2] {$B$};
				\filldraw (0,0) circle (3pt) node[above, yshift=2] {$Y$};
				\filldraw (1.5,0.7) circle (3pt) node[right, xshift=2] {$X$};
        \filldraw (1.5,-0.7) circle (3pt) node[right, xshift=2] {$A$};
        \end{scope}
        \node at (3,-1.2) {Paw graph};
    \end{tikzpicture}
    
    \caption{Graphs $P_4$ and paw}
    \label{P4andPaw}
\end{figure}

Assume now that $|A\cap B|=k-1$. W.l.o.g. let $a_i=b_i$ for $i\in [k-1]$ and $a_k<b_k$. Then $B+1$ is adjacent to $A$ and $B$. So, we have:

\begin{observation}\label{rem:AB}
Let $A,B\in [n]_2^k$. If $|A\cap B|=k-1$ then $\text{dist}(A,B)=2$ and if $|A\cap B|=1$ then $\text{dist}(A,B)\in\{2,3\}$.
\end{observation}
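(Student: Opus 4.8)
The plan is to prove, in each case, the lower bound $\text{dist}(A,B)\geq 2$ together with a matching short path realising the claimed upper bound. The lower bound is uniform: in both cases $A\cap B\neq\emptyset$ (here I use $k\geq 2$), so $A$ and $B$ are distinct and non-adjacent, giving $\text{dist}(A,B)\geq 2$. Throughout I will use one elementary fact: for any $S\in[n]_2$ the translate $S+1$ is again $2$-stable and satisfies $S\cap(S+1)=\emptyset$, since $2$-stability forbids cyclically consecutive elements.

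Consider first the case $|A\cap B|=k-1$, where the target is a common neighbour, so that $\text{dist}(A,B)=2$. Following the setup I would check that $B+1$ is such a neighbour: it is a vertex and is disjoint from $B$ by the elementary fact, so only $A\cap(B+1)=\emptyset$ needs justification. Writing $A=(A\cap B)\cup\{a\}$, the part $A\cap B\subseteq B$ is disjoint from $B+1$, and it remains to see that the private element $a$ satisfies $a\notin B+1$, i.e. $a-1\notin B$. In the normal form $a_i=b_i$ for $i<k$ and $a_k<b_k$ this is immediate from $2$-stability of $A$, since then $b_{k-1}<a_k<b_k$ forces $a_k-1=b_{k-1}=a_{k-1}$ as the only possibility, contradicting that $a_{k-1},a_k$ are non-consecutive.

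I expect the genuine obstacle to lie precisely in this normal form: the assumption that $A$ and $B$ agree on their first $k-1$ entries is not reachable by rotation for every pair with $|A\cap B|=k-1$, because the private elements $a\in A$ and $b\in B$ may be separated by shared elements on both arcs of the $n$-cycle. To make the argument uniform I would instead argue by a symmetric dichotomy. The computation above shows that $B+1$ fails to be a common neighbour only if $a-1\in B$, and symmetrically $A+1$ fails only if $b-1\in A$. If both failed, then $a-1\in B=(A\cap B)\cup\{b\}$ with $a-1\notin A\cap B$ (otherwise $a-1,a$ would be a consecutive pair in $A$) forces $a-1=b$; symmetrically $b-1=a$; together these give $a=a+2$ on the cycle, impossible for $n\geq 3$. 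Hence at least one of $A+1,B+1$ is a common neighbour and $\text{dist}(A,B)=2$ in every instance.

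Now consider $|A\cap B|=1$, where the target is $\text{dist}(A,B)\in\{2,3\}$, obtained from the explicit four-vertex gadget. First I would confirm $X=\{2,b_2,\dots,b_k\}$ is a vertex: since $1\in A$ and $A$ is $2$-stable we get $a_2\geq 3$, whence $b_2>a_2$ gives $b_2\geq 4$, which keeps $2,b_2$ non-consecutive and makes $X$ $2$-stable; then $Y=X+1$ is a vertex by the elementary fact. Next I would verify the three edges of the path $A$--$X$--$Y$--$B$: one has $X\cap Y=\emptyset$ by the elementary fact; $A\cap X=\emptyset$ because $A\cap B=\{1\}$ rules out any collision with $\{b_2,\dots,b_k\}$ while $1\notin X$ and $2\notin A$; and $Y\cap B=\emptyset$ because $3\notin B$ and no $b_i+1$ lies in the $2$-stable set $B$. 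Finally $A$ and $B$ share $\{1\}$ and $X$ and $B$ share $\{b_2,\dots,b_k\}$, so neither pair is adjacent, and the only undetermined pair is $A,Y$: its adjacency yields the paw (triangle $A,X,Y$ with pendant $B$, so $\text{dist}(A,B)=2$ via $A$--$Y$--$B$), and its non-adjacency yields the induced $P_4$ (so $\text{dist}(A,B)=3$). In both cases the path $A$--$X$--$Y$--$B$ gives $\text{dist}(A,B)\leq 3$, and together with the lower bound this yields $\text{dist}(A,B)\in\{2,3\}$, completing the proof.
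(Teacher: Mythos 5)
Your proposal is correct, and for the case $|A\cap B|=1$ it is essentially identical to the paper's argument: the same gadget $X=\{2,b_2,\dots,b_k\}$, $Y=X+1$, the same verification of the three edges and two non-edges, and the same dichotomy ($P_4$ versus paw) according to whether $A$ and $Y$ are adjacent. For the case $|A\cap B|=k-1$, however, you go genuinely further than the paper. The paper simply declares ``w.l.o.g.\ $a_i=b_i$ for $i\in[k-1]$ and $a_k<b_k$'' and observes that $B+1$ is then a common neighbour; but, as you correctly suspect, this normal form is \emph{not} reachable for every pair: it requires the two private elements to lie on a common arc of the cycle free of shared elements, which can fail --- e.g.\ $A=\{1,3,5\}$, $B=\{1,5,7\}$ in $\operatorname{SG}(8,3)$, where the privates $3$ and $7$ are separated by the common elements $1$ and $5$ on both arcs, so no rotation (or reflection) puts the pair in the paper's normal form. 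Your symmetric dichotomy closes this gap cleanly: $B+1$ fails only when the private element of $A$ satisfies $a-1=b$, $A+1$ fails only when $b-1=a$, and both failing would force $a\equiv a-2 \pmod n$, which is absurd; hence at least one of $A+1,B+1$ is always a common neighbour. (Note that in the problematic examples above, neither translate fails, consistent with your analysis: a failure $b=a-1$ makes the privates cyclically adjacent, which is exactly the situation where the paper's normal form \emph{is} reachable.) What the two approaches buy: the paper's version is a one-line remark adequate for an ``observation,'' while yours is airtight for all pairs with $|A\cap B|=k-1$ and makes explicit that the conclusion does not depend on the normal-form reduction. Your argument would be a strict improvement if substituted into the text.
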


Let $A,B \in [n]_2^k$. In order to compute the distance between vertices $A$ and $B$, we consider the subsets of vertices in the cycle $C_n$ with vertex set $[n]$ induced by the elements in $A \cup B$. \\

\noindent Let $X=A\cup B\subseteq [n]$. We denote $\mathcal X$ to the family of connected components of the graph induced by $X$ in the $n$-cycle $C_n$. In the same way, $\overline{\mathcal X}$ is the family of connected components of the graph induced by $[n]\setminus X$ in $C_n$. Let $\mathcal P=\{C\in\overline{\mathcal X}:\ |C|\text{ is even}\}$ and $\mathcal I=\{C\in\overline{\mathcal X}:\ |C|\text{ is odd}\}$. From these definitions, we have the following simple observations and Lemma \ref{th:s2dist2}.
\begin{observation}
\label{observation3}
Let $A,B \in [n]_2^k$ with $A \cap B \neq \emptyset$. Let $X = A \cup B$. Then,
\begin{enumerate}
\item 
$|\mathcal{X}|=\left|\overline{\mathcal X}\right|$.
\item If $\left|\overline{\mathcal X}\right|\geq k$ then,  $dist(A,B)=2$.
\item If $|C|\leq 2$ for every $C\in\mathcal{X}$ then,  $dist(A,B)=2$.
\end{enumerate}
\end{observation}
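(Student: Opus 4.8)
The plan is to prove the three parts in order, and to reduce part~(3) to part~(2).

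For part~(1), I would invoke the elementary fact that on the cycle $C_n$ the maximal arcs of $X$ and the maximal arcs of $[n]\setminus X$ alternate cyclically, so there are equally many of each, provided both sets are non-empty. Non-emptiness is immediate here: $X\neq\emptyset$ since $A\subseteq X$, and $[n]\setminus X\neq\emptyset$ because $|X|=|A\cup B|=2k-|A\cap B|\le 2k-1<n$, using $n\ge 2k+2$. This gives $|\mathcal X|=\left|\overline{\mathcal X}\right|$.

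For part~(2), I would construct a common neighbour of $A$ and $B$ explicitly. Any common neighbour must be disjoint from both $A$ and $B$, hence contained in $[n]\setminus X$, of size $k$, and $2$-stable. Since $\left|\overline{\mathcal X}\right|\ge k$, I choose one element from each of $k$ distinct components of $[n]\setminus X$, forming a set $C$ with $|C|=k$ and $C\subseteq[n]\setminus X$. The only point to verify is $2$-stability: two chosen elements lie in different components of $[n]\setminus X$, and two cyclically consecutive elements of $[n]\setminus X$ would necessarily belong to the same component; hence no two chosen elements are consecutive, so $C\in[n]_2^k$. Thus $C$ is adjacent to both $A$ and $B$, giving $\text{dist}(A,B)\le 2$. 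Because $A\cap B\neq\emptyset$, the vertices $A,B$ are not disjoint, hence non-adjacent, so $\text{dist}(A,B)\ge 2$ and equality follows.

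For part~(3), I would reduce to part~(2) by showing $\left|\overline{\mathcal X}\right|\ge k$ (equivalently, by part~(1), $|\mathcal X|\ge k$). The key structural fact is that every element of $A\cap B$ is isolated in the subgraph induced by $X$: if $i\in A\cap B$ then $i-1,i+1\notin X$, for otherwise one of $A,B$ would contain two consecutive elements, contradicting $2$-stability. Hence the $|A\cap B|$ common elements form $|A\cap B|$ singleton components, while the remaining $|X|-|A\cap B|=2(k-|A\cap B|)$ elements (those of $A\setminus B$ and $B\setminus A$) lie in the other $|\mathcal X|-|A\cap B|$ components. Since each such component has size at most $2$ by hypothesis, there are at least $k-|A\cap B|$ of them, whence $|\mathcal X|\ge |A\cap B|+(k-|A\cap B|)=k$, and part~(2) applies.

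The main obstacle is the counting in part~(3): one must first isolate the right structural fact—that a shared element of $A$ and $B$ cannot be cyclically adjacent to any element of $X$—before the size-$\le 2$ hypothesis can be used to force at least $k$ gaps. Parts~(1) and~(2) are routine once the alternation of arcs and the transversal construction are in place.
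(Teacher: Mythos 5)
Your proof is correct. The paper states this observation without proof (it is presented as an immediate consequence of the definitions), and your argument supplies exactly the intended details: the alternation of maximal arcs of $X$ and $[n]\setminus X$ for part~(1), the one-element-per-component transversal (which is automatically $2$-stable) for part~(2), and for part~(3) the key fact that every element of $A\cap B$ is isolated in $X$ --- a fact the paper itself records just afterwards when it notes that a component of $\mathcal{X}$ is either a single element of $A\cap B$ or alternates between elements of $A$ and $B$.
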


\begin{lemma}\label{th:s2dist2}
Let $A,B\in [n]_2^k$ with $A\cap B\neq\emptyset$ and $X=A\cup B$. Then, $dist(A,B)=2$ if and only if $\frac{1}{2}|\mathcal I|+\frac{1}{2}\sum\limits_{C\in\overline{\mathcal X}}|V(C)|\geq k$.
\end{lemma}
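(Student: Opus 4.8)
The plan is to characterize $\text{dist}(A,B)=2$ through the existence of a common neighbor, reduce that to finding a sufficiently large $2$-stable set inside the complement $[n]\setminus X$, and then compute the maximum possible size of such a set arc by arc. Throughout I treat $A$ and $B$ as distinct; since $A\cap B\neq\emptyset$ they are non-adjacent, so (as $\operatorname{SG}(n,k)$ is connected) $\text{dist}(A,B)=2$ holds precisely when $A$ and $B$ admit a common neighbor $C$.

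First I would unpack what a common neighbor is. A common neighbor of $A$ and $B$ is a vertex $C\in[n]_2^k$ disjoint from both, i.e.\ a $2$-stable $k$-subset of $[n]$ with $C\subseteq[n]\setminus X$, where $X=A\cup B$. Hence
$$\text{dist}(A,B)=2\iff [n]\setminus X\text{ contains a }2\text{-stable }k\text{-subset of }[n].$$
This turns the distance question into an extremal problem about $2$-stable sets living in the free region $[n]\setminus X$.

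The key structural step is to show that the maximum size of a $2$-stable subset of $[n]$ contained in $[n]\setminus X$ equals $\sum_{C\in\overline{\mathcal X}}\ceil{|V(C)|/2}$. The components $C\in\overline{\mathcal X}$ are exactly the maximal arcs (paths) of $C_n$ whose vertices all lie outside $X$. Because $A\cap B\neq\emptyset$ forces $X\neq\emptyset$, and because these arcs are maximal, any two consecutive arcs of $\overline{\mathcal X}$ are separated by at least one vertex of $X$; consequently no element of one arc is adjacent on the cycle $C_n$ to an element of another arc. Thus the $2$-stability constraint (no two consecutive on $C_n$) decouples across arcs, and on a single path with $\ell$ vertices the largest subset with no two consecutive elements has size $\ceil{\ell/2}$. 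Summing over the arcs and using that every subset of a $2$-stable set is again $2$-stable, I conclude that a $2$-stable $k$-subset avoiding $X$ exists if and only if $\sum_{C\in\overline{\mathcal X}}\ceil{|V(C)|/2}\geq k$. Finally I would rewrite this threshold: since $\ceil{\ell/2}=\ell/2$ for even $\ell$ and $\ceil{\ell/2}=(\ell+1)/2$ for odd $\ell$, splitting $\overline{\mathcal X}$ into its even part $\mathcal P$ and odd part $\mathcal I$ yields
$$\sum_{C\in\overline{\mathcal X}}\ceil{\frac{|V(C)|}{2}}=\frac12\sum_{C\in\overline{\mathcal X}}|V(C)|+\frac12|\mathcal I|,$$
which is exactly the quantity appearing in the statement, giving the claimed equivalence.

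The main obstacle is precisely the structural step: arguing cleanly that the global no-two-consecutive constraint factorizes over the arcs of $[n]\setminus X$, so that the global maximum is the sum of the easy per-path maxima $\ceil{\ell/2}$. The care needed there is to verify that distinct components of $\overline{\mathcal X}$ are genuinely separated by a vertex of $X$ — which relies on $X\neq\emptyset$ and on the maximality of the components — since otherwise two arcs could abut and a cross-arc consecutive pair might invalidate the decoupling.
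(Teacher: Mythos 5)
Your proposal is correct and follows essentially the same route as the paper: both reduce $\text{dist}(A,B)=2$ to the existence of a $2$-stable $k$-subset of $[n]\setminus X$, compute the maximum $2$-stable subset of each component of $\overline{\mathcal X}$ as $\left\lceil |V(C)|/2\right\rceil$, and sum over components to obtain $\frac{1}{2}|\mathcal I|+\frac{1}{2}\sum_{C\in\overline{\mathcal X}}|V(C)|$. Your explicit justification that the stability constraint decouples across components is a point the paper leaves as ``not hard to see,'' but the argument is the same.
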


\begin{proof}
Let $C\in\mathcal P$. Note that the maximum cardinality of a $2$-stable subset of $V(C)$ is $\frac{1}{2}|V(C)|$. If $C\in\mathcal I$, the maximum cardinality of a $2$-stable subset of $V(C)$ is $\frac{1}{2}(|V(C)|+1)$. For each $C\in\overline{\mathcal X}$, let $C_2$ be a maximum cardinality $2$-stable subset of $C$. It is not hard to see that $Y=\bigcup\limits_{C\in\overline{\mathcal X}}C_2$ is a $2$-stable subset of $[n]\setminus (A\cup B)$ with cardinality $\sum\limits_{C\in\mathcal P}\frac{1}{2}|V(C)|+\sum\limits_{C\in\mathcal I}\frac{1}{2}(|V(C)|+1)=\frac{1}{2}|\mathcal I|+\frac{1}{2}\sum\limits_{C\in\overline{\mathcal X}}|V(C)|$.

Hence, if $\frac{1}{2}|\mathcal I|+\frac{1}{2}\sum\limits_{C\in\overline{\mathcal X}}|V(C)|\geq k$, there exists a $2$-stable $k$-subset $Y'$ of $[n]\setminus X$. Therefore, $Y'$ is adjacent to $A$ and $B$ in $\operatorname{SG}(n,k)$ and $dist(A,B)=2$.

In order to prove the converse, it is enough to note that the maximum cardinality of a $2$-stable subset in $[n]\setminus X$ is $\frac{1}{2}|\mathcal I|+\frac{1}{2}\sum\limits_{C\in\overline{\mathcal X}}|V(C)|$. Then, if $\frac{1}{2}|\mathcal I|+\frac{1}{2}\sum\limits_{C\in\overline{\mathcal X}}|V(C)|<k$, the distance between $A$ and $B$ in $\operatorname{SG}(n,k)$ is at least $3$.
\end{proof}

Notice that if $2k+ 2\leq n\leq 4k-3$ (i.e. if $2 \leq r \leq 2k-3$ and $n=2k+r$) then $k \geq 3$ and $\operatorname{D}(\operatorname{SG}(n,k)) \geq3 $ since vertices $A=\{1,4,6,\dots,2k\}$ and $B=\{1,5,7,\dots,2k+1\}$ are at distance $3$ in $\operatorname{SG}(n,k)$. In fact, observe that $[n]\setminus (A\cup B)=\{2,3\}\cup\{2k+2,\dots,n\}$ and then there is no $2$-stable $k$-subset in $[n]\setminus (A\cup B)$, i.e. there is no vertex of $\operatorname{SG}(n,k)$, adjacent to $A$ and $B$. Finally, notice that the vertices $A$, $\{3,5,7\dots,2k+1\}$, $\{2,4,6,\dots,2k\}$ and $B$ induce a $P_4$ in $\operatorname{SG}(n,k)$.\\
On the other hand, if $n\geq 4k-2$, then Lemma \ref{th:s2dist2} is enough to assure that $\operatorname{D}(\operatorname{SG}(n,k))=2$.

\begin{theorem}\label{cor:s2diam2}
Let $n,k$ and $r$ be positive integers, with $k \geq 2$ and $n = 2k+r$. $\operatorname{D}(\operatorname{SG}(n,k))=2$ if and only if $r \geq 2k-2$.
\end{theorem}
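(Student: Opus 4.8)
The plan is to prove both directions of the biconditional by leveraging the characterization of distance-$2$ pairs given in Lemma~\ref{th:s2dist2}, together with the observation immediately preceding the theorem. Recall that the diameter of a connected graph is $2$ if and only if the graph is not complete and every pair of non-adjacent vertices is at distance $2$; equivalently, $D(\operatorname{SG}(n,k))=2$ iff no pair of vertices is at distance $\geq 3$ (and the graph is not complete, which holds here since $\operatorname{SG}(n,k)$ has non-adjacent vertices for $n\geq 2k+2$). So the task reduces to deciding, as a function of $r$, whether \emph{every} pair $A,B\in[n]_2^k$ satisfies $\operatorname{dist}(A,B)\leq 2$.

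For the forward direction (contrapositive), the excerpt already does the work: the remark just before the statement exhibits, for every $n$ with $2k+2\leq n\leq 4k-3$ (i.e. $2\leq r\leq 2k-3$), an explicit pair $A=\{1,4,6,\dots,2k\}$, $B=\{1,5,7,\dots,2k+1\}$ at distance $3$, so $D(\operatorname{SG}(n,k))\geq 3$ in that whole range. Hence if $r\leq 2k-3$ the diameter exceeds $2$. Contrapositively, $D=2$ forces $r\geq 2k-2$. I would simply cite this construction and verify once more that $[n]\setminus(A\cup B)$ contains no $2$-stable $k$-set, which is exactly what Lemma~\ref{th:s2dist2} tests.

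For the reverse direction, assume $r\geq 2k-2$, i.e. $n\geq 4k-2$. I must show every pair is at distance at most $2$. If $A\cap B=\emptyset$ then $A,B$ are adjacent, so distance $1$. If $A\cap B\neq\emptyset$, I apply Lemma~\ref{th:s2dist2}: it suffices to check $\frac12|\mathcal I|+\frac12\sum_{C\in\overline{\mathcal X}}|V(C)|\geq k$. The key observation is that $X=A\cup B$ has at most $2k$ elements, so $|[n]\setminus X|=n-|X|\geq (4k-2)-2k = 2k-2$, whence $\sum_{C\in\overline{\mathcal X}}|V(C)|\geq 2k-2$ and already $\frac12\sum_{C\in\overline{\mathcal X}}|V(C)|\geq k-1$. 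The remaining gap of $1$ is covered by the $|\mathcal I|$ term: since $A\cap B\neq\emptyset$, $|X|\leq 2k-1$, giving $\sum_{C\in\overline{\mathcal X}}|V(C)|\geq 2k-1$, an \emph{odd} total that forces $\overline{\mathcal X}$ to contain at least one odd component, so $|\mathcal I|\geq 1$ and the bound $\frac12|\mathcal I|+\frac12\cdot(2k-1)\geq \frac12 + (k-\tfrac12)=k$ holds. This is the part requiring a little care.

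The main obstacle is precisely this parity bookkeeping at the boundary $n=4k-2$: the complementary total $\sum|V(C)|$ sits exactly at the threshold, so the inequality of Lemma~\ref{th:s2dist2} is satisfied only with equality-plus-a-nudge, and the nudge comes entirely from guaranteeing an odd component. I would make this rigorous by noting that whenever $A\cap B\neq\emptyset$ we have $|A\cup B|\leq 2k-1$, that $n-|A\cup B|$ and $|A\cup B|$ have opposite parities relative to $n$'s parity, and that an odd-cardinality complement necessarily contains an odd component; combining these gives $\frac12|\mathcal I|+\frac12\sum_{C\in\overline{\mathcal X}}|V(C)|\geq k$, completing the proof that $D(\operatorname{SG}(n,k))=2$ when $r\geq 2k-2$.
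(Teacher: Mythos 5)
Your proposal is correct and follows essentially the same route as the paper: the forward direction cites the explicit distance-$3$ pair $A=\{1,4,6,\dots,2k\}$, $B=\{1,5,7,\dots,2k+1\}$ constructed just before the theorem, and the reverse direction applies Lemma~\ref{th:s2dist2} with the count $|[n]\setminus X|\geq (4k-2)-(2k-1)=2k-1$. The only difference is that you spell out the parity step (either the complement has at least $2k$ elements, or it has exactly $2k-1$, which is odd and hence forces an odd component, giving $|\mathcal I|\geq 1$), which the paper's proof leaves implicit behind its ``Hence''.
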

\begin{proof}
By the preceding discussion, it only remains to show that if $r\geq k-2$, then $\operatorname{D}(\operatorname{SG}(n,k))=2$.
Let $A,B \in [n]_2^k$ such that $A \cup B \neq \emptyset$. If $r \geq 2k-2$ then $n \geq 4k-2$ and thus, $|[n]\setminus X|\geq2k+2k-2-(2k-1)=2k-1$. Hence, 
\[
\frac{1}{2}|\mathcal I|+\frac{1}{2}\sum\limits_{C\in\overline{\mathcal X}}|V(C)|\geq k.
\]
Therefore, by Lemma \ref{th:s2dist2} the result holds.
\end{proof}

Let $A,B \in [n]_2^k$, with $A \neq B$ and $A \cap B \neq \emptyset$, and let $X = A \cup B$. In what follows, we will study the structure of $\mathcal{X}$ and $\overline{\mathcal{X}}$ in order to construct a path between vertices $A$ and $B$ of length as short as possible. \\

Notice that a connected component $C$ in $\mathcal{X}$ is either a single element in $A\cap B$, or it alternates between vertices of $A$ and $B$. Furthermore, if $A\neq B$ and $A\cap B\neq \emptyset$, then there is at least one component in $\mathcal{X}$ made from a single element in $A\cap B$, and at least one component not containing elements in $A\cap B$. Consider the following example:
\begin{example}
\label{ex1}
Let $A,B \in [20]_2^7$, where $A = \{2,8,10,12,15,18,20\}$ and $B = \{1,6,8,10,12,14,17\}$. Thus, we have that $A \cap B = \{8,10,12\}$; $X = A \cup B = \{1,2,6,8,10,12,14,15,17,18,20\}$; $A \setminus B = \{2,15,18,20\}$; $B \setminus A = \{1,6,14,17\}$; $\mathcal{X} = \{\{20,1,2\},\{6\},\{8\},\{10\},\{12\},\{14,15\},\{17,18\}\}$, and $\overline{\mathcal{X}} = \{\{3,4,5\},\{7\},\{9\},\{11\},\{13\},\{16\},\{19\}\}$.
\end{example}

Now, we want to construct two sets $A^*, B^* \in [n]_2$ such that $A^* \subset \overline{A}$, $|A^*| \geq k$, $B \setminus A \subset A^*$,  $B^* \subset \overline{B}$, $|B^*| \geq k$, and $A \setminus B \subset B^*$. Once sets $A^*$ and $B^*$ are constructed, we want to find two subsets $A' \subseteq A^*$ and $B' \subseteq B^*$ such that $A \cap A' = B \cap B' = \emptyset$ and $|A'| = |B'| = k$. Furthermore, we want $A' \cap B' = \emptyset$ or, if this cannot be achieved, 
we want the intersection to be as small as possible.\\

Looking at Example \ref{ex1}, we start with $\{1,6,14,17\}\subset A^*$ and $\{2,15,18,20\}\subset B^*$. Notice that $5,7,13$ and $16$ cannot be in $A^*$, as we want it to be in $[n]_2$. This happens because $\ell \in \{6,12,17\}$ is a vertex in components of $\mathcal{X}$ and $\ell -1$ or $\ell +1$ is a vertex in components of $\bar{\mathcal{X}}$. Similarly $3,16,$ and $19$ cannot be in $B^*$. With that in mind, we can have $3\in A^*$ and $4\in B^*$, but then $5$ cannot be in either; $7$ and $13$ can only be in $B^*$; $16$ cannot be in either; $19$ can be in $A^*$. Thus far we have $\{1,3,6,14,17,19\}\subset A^*$, and $\{2,4,7,13,15,18,20\}\subset B^*$. As the elements in $A\cap B$ are in neither $A^*$ nor $B^*$, we have no restrictions for $9$ and $11$. This means that we can have them in $A^*$ or in $B^*$ (even in both, if it was necessary 
for both of them to have at least $k$ elements). As right now $B^*$ already has $k=7$ elements, we can have $9$ and $11$ in $A^*$. Now we have $\{1,3,6,9,11,14,17,19\}\subset A^*$, and $\{2,4,7,13,15,18,20\}\subset B^*$. As both $A^*$ and $B^*$ are in $[n]_2$ and each of them has at least $7$ elements, we stop adding elements to them. Finally, we can obtain $A'$ by eliminating any element from $A^*$, say $A'=\{1,3,6,9,14,17,19\}$, and we can have $B'=B^*$. Because of how we build them, the vertices $A,A',B',$ and $B$ induce a $P_4$ in  $\operatorname{SG}(20,7)$, which means that $\text{dist}(A,B) \leq 3$.

\subsubsection*{Construction of sets $A^*$ and $B^*$ and an upper bound for $\text{dist}(A,B)$}
\label{sec-sets}
In order to construct sets $A^*$ and $B^*$ corresponding to Example \ref{ex1}, we care particularly about the length of the connected components in $\overline{\mathcal{X}}$ and their relation with the end-vertices of components of $\mathcal{X}$.
In particular, being able to use all the elements in a connected component $C\in \overline{\mathcal{X}}$ depends on the parity of $|C|$ and on the end-vertices of $C$. From now on, by Theorem \ref{cor:s2diam2}, we assume that $n = 2k + r$ with $2 \leq r \leq 2k-3$, that is, $2k+2 \leq n \leq 4k-3$ and we assume that $\text{dist}(A,B) \geq 3$.\\

We say that $\ell\in[n]$ is an \emph{end} if $\ell\in X$ and $|\{\ell-1,\ell+1\}\cap \overline{X}|\geq 1$. This is, 
$\ell$ is an endpoint if it is in $X$ and at least one of its neighbors in the cycle is in $\overline{X}$.
Furthermore, we say that $\ell$ is an \emph{$A$-end} if $\ell\in A\setminus B$, a \emph{$B$-end} if $\ell\in B\setminus A$
and an \emph{$H$-end} if $\ell\in A\cap B$. Finally, let $e(A),e(B)$ and $e(H)$ be the sets of $A$-ends, $B$-ends and $H$-ends
respectively. Notice that $e(H)=A\cap B$, as every vertex in $A\cap B$ must be an end. Let $h=|e(H)|$. Finally, notice that if $A\neq B$ and $A\cap B\neq \emptyset$ then
$e(H)\neq \emptyset$ and $e(A)\cup e(B)\neq \emptyset$ (actually, neither $e(A)$ nor $e(B)$ are empty). In Example \ref{ex1}, $e(A)=\{2,15,18,20\}$, $e(B)=\{6,14,17\}$, and $e(H)=\{8,10,12\}$. 
Notice that $|e(A)|=4$ and $|e(B)|=3$, which means that, in general, $e(A)$ and $e(B)$ 
are not necessarily equal. To obtain the necessary relation between $e(A)$ and $e(B)$, it is helpful to study the structure of the components in $\mathcal{X}$.\\

We say that a connected component $C\in\mathcal{X}$ is 
an $A$-component if $|C\cap e(A)|\geq 1$ and $|C\cap e(B)|=0$. Notice that this 
can happen in two different ways, either $|C|\geq 3$ and $|e(A)\cap C|=2$ or $|C|=1$ and
$e(A)\cap C=C$.
Similarly, we say that a connected component $C\in \mathcal{X}$ is a $B$-component if $|C\cap e(B)|\geq 1$ and $|C\cap e(A)|=0$.
If $C$ is neither an $A$-component nor a $B$-component, we say that $C$ is an $H$-component.
Notice that if $C$ is an $H$-component, then $|C|=1$ if and only if $C\subset A\cap B$. In this case, we say that $C$ is an $H'$-component, otherwise, we say that $C$ is an $H''$-component.
In Example \ref{ex1}, $\{1,2,20\}$ is an $A$-component, $\{6\}$ is a $B$-component,
and $\{8\}$, $\{10\}$, $\{12\}$, $\{14,15\}$, and $\{17,18\}$ are $H$-components,
where $\{8\}$, $\{10\}$ and $\{12\}$ are $H'$-components, and $\{14,15\}$ and $\{17,18\}$ are $H''$-components.
By $n(A)$, $n(B)$, $n(H)$, $n(H')$ and $n(H'')$ we denote the number of $A$-components, $B$-components $H$-components, $H'$-components and $H''$-components
respectively.
Notice that in Example \ref{ex1}, $n(A)=n(B)$, which is actually true in general as it is shown in Lemma \ref{leman(A)=n(B)}.

\begin{lemma}\label{leman(A)=n(B)}
The number of $A$-components equals the number of $B$-components.
\end{lemma}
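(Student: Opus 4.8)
The plan is to prove the equality $n(A)=n(B)$ by a signed counting argument: I would assign weight $+1$ to each element of $A\setminus B$ and weight $-1$ to each element of $B\setminus A$, and then evaluate the total weight in two different ways. Globally the total is $|A\setminus B|-|B\setminus A|=(k-h)-(k-h)=0$, since $|A|=|B|=k$ and $|A\cap B|=h$. The strategy is to show that the same total, regrouped component by component over $\mathcal{X}$, equals exactly $n(A)-n(B)$; comparing the two evaluations then gives the lemma.

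First I would record the internal structure of the components, which is essentially already observed above: a component $C\in\mathcal{X}$ is either a single element of $A\cap B$ (an $H'$-component), or an arc of consecutive integers of the cycle on which membership alternates strictly between $A\setminus B$ and $B\setminus A$. This alternation is forced by the $2$-stability of both $A$ and $B$, since two consecutive integers can never both lie in $A$ nor both in $B$, and an element of $A\cap B$ is necessarily isolated. I would then translate the type of a non-$H'$ component into the parity of its length together with the types of its two end-vertices: an arc of odd length has both ends of the same type and is therefore an $A$-component or a $B$-component, while an arc of even length has ends of opposite type and is therefore an $H''$-component.

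The core computation is the per-component weight. On an odd arc of length $2m+1$ that starts and ends in $A\setminus B$, the alternating pattern contributes $m+1$ elements of weight $+1$ and $m$ of weight $-1$, for a net $+1$; symmetrically a $B$-component contributes net $-1$; an even ($H''$) arc contributes $m$ elements of each sign, hence $0$; and an $H'$-component contributes $0$ since it carries no element of $A\setminus B$ or of $B\setminus A$. Summing over all of $\mathcal{X}$ therefore yields total weight $n(A)-n(B)$, and equating this with the global value $0$ completes the proof.

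The main obstacle, though essentially bookkeeping, is the clean handling of the degenerate cases so that the count is airtight: a single-element component contained in $A\setminus B$ must be recognised as an $A$-component (the $m=0$ instance of an odd arc, net weight $+1$), one must take care that $H'$-components are excluded from the weighted sum rather than miscounted, and the end-type/parity correspondence must be phrased so that it simultaneously covers length-one and longer arcs. Once these boundary cases are reconciled with the generic alternating pattern, the two evaluations of the total weight coincide and the equality $n(A)=n(B)$ follows.
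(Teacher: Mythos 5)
Your proposal is correct and is essentially the paper's own argument: both proofs compare, component by component of $\mathcal{X}$, the number of elements in $A$ versus in $B$ (your $\pm 1$ weighting of $A\setminus B$ and $B\setminus A$ computes exactly $|C\cap A|-|C\cap B|$, which is $+1$, $-1$, or $0$ according to the component type), and then use the global identity $|A|=|B|=k$ to conclude $n(A)=n(B)$. The only difference is presentational: you justify the per-component contributions explicitly via the alternation and parity of the arcs, whereas the paper asserts them directly, relying on the structural observation made just before the lemma.
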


\begin{proof}
Consider the components in $\mathcal{X}$. If a component $C\in \mathcal{X}$ is an $H$-component, then $C$ has the same number of elements in $A$ and in $B$, i.e. $|C\cap A|=|C\cap B|$.
If $C$ is an $A$-component, then $C$ has one extra element in $A$, i.e. $|C\cap A|=|C\cap B|+1$.
Similarly if $C$ is a $B$-component we get $|C\cap A|+1=|C\cap B|$.  As $|A|=|B| = k$, we get that  the number of $A$-components coincides with the number of $B$ components.
\end{proof}


Next, we obtain a formula relating $e(A)$ and $e(B)$.
Notice that $|e(A)|$ is equal to twice the number of $A$-components of size at least $3$, plus the number
of $A$-components of size $1$, plus the number of $H''$-components. 
We partition $e(A)$ into $e'(A)$ and $e''(A)$, where $e''(A)$ are the 
elements in $e(A)$ that are in connected components with exactly one element, and $e'(A)$ are the rest. 
In the same way, partition $e(B)$ into $e'(B)$ and $e''(B)$. 

\begin{lemma}\label{lemae(A)=e(B)}
If $A,B\in [n]_2^k$ then, $|e'(A)|+2|e''(A)|=|e'(B)|+2|e''(B)|.$
\end{lemma}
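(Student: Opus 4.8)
The plan is to express both sides of the identity purely in terms of the component counts $n(A)$, $n(B)$, and $n(H'')$, and then to invoke Lemma~\ref{leman(A)=n(B)}. The first step is to pin down the local structure of a component $C\in\mathcal{X}$ using the $2$-stability of $A$ and $B$. If $\ell\in A\cap B$, then neither $\ell-1$ nor $\ell+1$ lies in $A$ or in $B$ (both sets are $2$-stable), so $\ell$ is isolated in $X$; hence every element of $A\cap B$ is a singleton $H'$-component. Consequently any component of size at least $2$ consists solely of elements of $A\triangle B$, and since consecutive elements of $X$ can lie neither both in $A$ nor both in $B$, such a component strictly alternates between $A\setminus B$ and $B\setminus A$ along the cycle.

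Next I would read off the endpoints by parity. A component of odd size at least $3$ has its two endpoints of the same type: if both are in $A\setminus B$ it is an $A$-component contributing two elements to $e'(A)$, and symmetrically for $B$. A component of even size has one endpoint in $A\setminus B$ and one in $B\setminus A$; it therefore meets both $e(A)$ and $e(B)$, so it is neither an $A$- nor a $B$-component, i.e. it is exactly an $H''$-component, contributing one element to $e'(A)$ and one to $e'(B)$. Finally, a singleton $A$-component contributes one element to $e''(A)$ (symmetrically for $B$), while singleton $H'$-components contribute nothing to $e(A)\cup e(B)$. This establishes the dictionary that the $H''$-components coincide with the even-size components, each carrying exactly one $A$-end and one $B$-end.

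With this correspondence in place the counting is immediate. Splitting $n(A)$ into singleton and size-$\geq 3$ $A$-components, the size-$\geq 3$ ones contribute $2$ each to $|e'(A)|$ and the $H''$-components contribute $1$ each, while $|e''(A)|$ equals the number of singleton $A$-components. Hence
\[
|e'(A)|+2|e''(A)| = 2\,n(A)+n(H''),
\]
and by the identical argument applied to $B$,
\[
|e'(B)|+2|e''(B)| = 2\,n(B)+n(H'').
\]
Since $n(A)=n(B)$ by Lemma~\ref{leman(A)=n(B)}, the two right-hand sides coincide and the claim follows.

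The only delicate point is the parity bookkeeping in the second step: one must verify that the even-size components are precisely the $H''$-components and that each contributes symmetrically to $e'(A)$ and to $e'(B)$, so that the $n(H'')$ terms cancel. Once that correspondence is confirmed, the rest reduces to the straightforward count above and the direct appeal to Lemma~\ref{leman(A)=n(B)}.
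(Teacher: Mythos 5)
Your proof is correct and takes essentially the same route as the paper's: both establish the identity $|e'(A)|+2|e''(A)|=2n(A)+n(H'')$ together with its analogue $|e'(B)|+2|e''(B)|=2n(B)+n(H'')$, and then conclude by invoking Lemma~\ref{leman(A)=n(B)}. The additional structural bookkeeping you carry out (elements of $A\cap B$ are isolated in $X$, components of size at least $2$ alternate, and the $H''$-components are exactly the even-size ones) only makes explicit facts the paper records in the discussion preceding the lemma.
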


\begin{proof}
Consider the number
$2n(A)+n(H'')$.
Notice that in $n(H'')$ every element in $e'(A)$ which is in an $H''$-component is counted once.
Furthermore, in $2n(A)$, every vertex in $e'(A)$ which is in an $A$-component is counted once,
and every vertex in $e''(A)$ is counted twice. As every element in $e(A)$ is either in an $A$-component or in an $H''$-component, this yields: $2n(A)+n(H'')=|e'(A)|+2|e''(A)|$. In a similar fashion, we get $2n(B)+n(H'')=|e'(B)|+2|e''(B)|$.
Therefore, by applying Lemma \ref{leman(A)=n(B)} we obtain $|e'(A)|+2|e''(A)|=|e'(B)|+2|e''(B)|$.
\end{proof}


Next we turn our focus on the connected components in $\overline{\mathcal{X}}$.
Now, we call \emph{block} to each element of $\overline{\mathcal X}$. Consider the following classifications of a block $[i,j]$:

\begin{itemize}
	\item \emph{Type I}: if $\{i-1,j+1\}\subseteq e(H)$.
	\item \emph{Type II(A)}: if $i-1\in e(H)$ and $j+1\in e(A)$.
	\item \emph{Type II(B)}: if $i-1\in e(H)$ and $j+1\in e(B)$.
	\item \emph{Type III(A)}: if $i-1\in e(A)$  and $j+1\in e(H)$.
	\item \emph{Type III(B)}: if $i-1\in e(B)$ and $j+1\in e(H)$.
	\item \emph{Type IV(A)}: if $\{i-1,j+1\}\subset e(A)$.
	\item \emph{Type IV(B)}: if $\{i-1,j+1\}\subset e(B)$.
	\item \emph{Type IV(H)}: if $[i,j]$ is not of the types above, i.e. if $i-1\in e(A)$ and $j+1\in e(B)$ or vice versa.
\end{itemize}

Note that every block is of exactly one of the types above.
Let $\mathcal T=\{I,$ $II(A),$ $II(B),$ $III(A),$ $III(B),$ $IV(A),$ $IV(B),$ $IV(H)\}$. We define $n(T)$ as the amount of blocks of type $T$, for $T\in\mathcal T$. 
In Example \ref{ex1}, $\{9\}$ and $\{11\}$ are blocks of type $I$, there are no blocks of type $II(A)$, $\{13\}$ is a block of type $II(B)$, there are no blocks of type $III(A)$, $\{7\}$ is a block of type $III(B)$, $\{19\}$ is a block of type $IV(A)$, there are no blocks of type $IV(B)$, and $\{3,4,5\}$ and $\{16\}$ are blocks of type $IV(H)$. 

Notice that if $[i,j]$ is a block, then $i-1$ and $j+1$ are ends in some components of $\mathcal{X}$. In such a case, we say 
that $i-1$ and $j+1$ are \emph{connected} to $[i,j]$.

There is an important difference between type $IV$ and the rest of the types.
If we are trying to form the sets $A^*$ and $B^*$ as was done with
Example \ref{ex1}, and $[i,j]$ is a block of type 
$T\in\{I,II(A),II(B),III(A),III(B)\}$,
then we can use every element in $[i,j]$ because 
we have restrictions in at most one of $\{i,j\}$ for the sets $A^*$ and $B^*$. 
If $[i,j]$ is a block of type $IV(A)$ (or $IV(B)$) of even length, then 
all but one of the elements of $[i,j]$ can be used, because neither $i+1$ nor
$j-1$ can be in $B^*$ ($A^*$ resp.). If $[i,j]$ is a block of type $IV(H)$ of odd length, 
then all but one of the elements of $[i,j]$ can be used, as $i+1$ and $j-1$ cannot both
be in $A^*$ nor both be in $B^*$.
To reflect the fact that for a block $[i,j]$ of type $T\in \{IV(A),IV(B),IV(H)\}$ 
we can only assure the use of $|[i,j]|-1$ elements, we define
\[
m([i,j])=\left\{
\begin{array}[h]{ll}
|[i,j]| & \text{if }[i,j]\text{ is a block of type }T\in\{I,II(A),II(B),III(A),III(B)\};\\
 & \\
|[i,j]|-1 & \text{if }[i,j]\text{ is a block of type }T\in\{IV(A),IV(B),IV(H)\}.	
\end{array}
\right.\]

\begin{lemma}
\label{ecuacion1}
$\sum\limits_{[i,j]\in\overline{\mathcal X}}m([i,j]) \geq n-3k+2h+2$.
\end{lemma}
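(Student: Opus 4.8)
The plan is to reduce the claimed inequality to a clean upper bound on the number of type~$IV$ blocks, and then to extract that bound from the standing hypothesis $\text{dist}(A,B)\geq 3$. Write $N_{IV}=n(IV(A))+n(IV(B))+n(IV(H))$ for the number of type~$IV$ blocks and $t=|\overline{\mathcal X}|$ for the total number of blocks. By the definition of $m$, every block contributes its full length except the type~$IV$ ones, which lose exactly one element each; hence $\sum_{[i,j]\in\overline{\mathcal X}}m([i,j])=|[n]\setminus X|-N_{IV}=n-(2k-h)-N_{IV}$, using $|X|=|A\cup B|=2k-h$. Substituting this into the target inequality, a short computation shows that the statement is equivalent to $N_{IV}\leq k-h-2$, which is what I would prove.

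First I would establish the identity $N_{IV}=t-2h+n(I)$. Each of the $h$ elements of $A\cap B$ is, by $2$-stability of both $A$ and $B$, an isolated vertex of $\mathcal X$ (an $H'$-component), so it is connected to exactly two blocks; summing, the number of block--$H$-end incidences equals $2h$. Counting the same incidences from the block side, a block contributes $2$ if it is of type~$I$, contributes $1$ if it is of type $II$ or $III$, and contributes $0$ if it is of type~$IV$. This yields $2n(I)+\big(n(II(A))+n(II(B))+n(III(A))+n(III(B))\big)=2h$. Since the types partition all blocks, $t=n(I)+\big(n(II(A))+n(II(B))+n(III(A))+n(III(B))\big)+N_{IV}$, and eliminating the middle sum gives $N_{IV}=t-2h+n(I)$.

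It then remains to bound the two terms $t$ and $n(I)$. For $t$, I would use $\text{dist}(A,B)\geq 3$: by Lemma~\ref{th:s2dist2} the maximum size $\frac12|\mathcal I|+\frac12\sum_{C\in\overline{\mathcal X}}|V(C)|$ of a $2$-stable subset of $[n]\setminus X$ is at most $k-1$, so $|\mathcal I|+\sum_{C}|V(C)|\leq 2k-2$. Combining this with $\sum_{C}|V(C)|\geq 2|\mathcal P|+|\mathcal I|$ (every even block has at least $2$ and every odd block at least $1$ element) gives $2t=2|\mathcal P|+2|\mathcal I|\leq \sum_{C}|V(C)|+|\mathcal I|\leq 2k-2$, that is $t\leq k-1$. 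For $n(I)$, observe that a type~$I$ block $[i,j]$ is flanked by the two $H'$-components $\{i-1\}$ and $\{j+1\}$, so type~$I$ blocks correspond bijectively to pairs of cyclically consecutive $H'$-components; among the $h$ such components this count is $h$ minus the number of their maximal cyclic runs, and since $A\neq B$ forces at least one non-$H'$ component there is at least one run, whence $n(I)\leq h-1$. Putting the pieces together, $N_{IV}=t-2h+n(I)\leq (k-1)-2h+(h-1)=k-h-2$, as required.

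The main obstacle is the passage from the distance hypothesis to the bound $t\leq k-1$: the inequality is genuinely false without $\text{dist}(A,B)\geq 3$ (for instance Example~\ref{ex1} has distance $2$ and gives $\sum m=6<7$), so the proof must route the hypothesis through Lemma~\ref{th:s2dist2}, and the delicate point is the parity bookkeeping that converts the maximum-$2$-stable-set formula into a bound on the number of blocks. By comparison, the identity $N_{IV}=t-2h+n(I)$ and the estimate $n(I)\leq h-1$ are routine once the block/end incidence structure recorded before the lemma is in hand.
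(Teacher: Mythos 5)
Your proof is correct and follows essentially the same route as the paper's: the same reduction to $n(IV(A))+n(IV(B))+n(IV(H))\leq k-h-2$, the same incidence count $2h=2n(I)+n(II(A))+n(II(B))+n(III(A))+n(III(B))$ around the $H$-ends, and the same use of the distance hypothesis to force $\left|\overline{\mathcal X}\right|\leq k-1$ (the paper cites Observation \ref{observation3}.2, where you rederive the bound from Lemma \ref{th:s2dist2}). Your closing estimate $n(I)\leq h-1$ via runs of cyclically consecutive $H'$-components is, given that incidence identity, algebraically equivalent to the paper's observation that there are at least two blocks of type $II$ or $III$.
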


\begin{proof}
Observe that $\sum_{[i,j]\in\overline{\mathcal X}}m([i,j]) = n - \left|A\cup B\right|-(n(IV(A))+n(IV(B)+n(IV(H))) = \\ n-(2k-h)-(n(IV(A))+n(IV(B)+n(IV(H)))$. Therefore, in order to prove this result, it is enough to show that $(n(IV(A))+n(IV(B)+n(IV(H))) \leq k-h-2$.\\

Let $\ell\in e(H)$.  Notice that $\{\ell-1,\ell+1\}\subset \overline{X}$.
Furthermore, $\ell$ is connected to two blocks of type $T\in\{I,II(A),II(B),III(A),III(B)\}$.
On the other hand, every block of type $I$ is connected to two elements of $e(H)$, while every block of type 
$T\in\{II(A),II(B),III(A),III(B)\}$ is connected to one element of $e(H)$. 
Thus, if we count twice the number of elements in $e(H)$, we count every block of type $I$ twice, and every block of
type $T\in\{II(A),II(B),III(A),III(B)\}$ once. In other words, we get
\begin{align*}
2|e(H)|=&2n(I)+n(II(A))+n(II(B))+n(III(A))+n(III(B))\\
|e(H)|=&n(I)+\frac{n(II(A))+n(II(B))+n(III(A))+n(III(B))}{2}.
\end{align*}
Using that $|e(H)|=h$ we obtain,
\[
h=n(I)+\frac{n(II(A))+n(II(B))+n(III(A))+n(III(B))}{2}.
\]
Furthermore, if $A\neq B$ and $A\cap B\neq \emptyset$,  $n(II(A))+n(II(B))+n(III(A))+n(III(B))\geq 2$,
as there has to be at least one block $[i,j]$ with $i-1\in e(H)$ and $j+1\in e(A)\cup e(B)$
and at least one block $[i,j]$ with $i-1\in e(A)\cup e(B)$ and $j+1\in e(H)$.
Therefore, we get 
\[
n(I)+n(II(A))+n(II(B))+n(III(A))+n(III(B))\geq h+1.
\]
Let us assume that $\left|\overline{\mathcal X}\right|\leq k-1$. Otherwise, by Observation \ref{observation3}.2, we would have that 
$\text{dist}(A,B)=2$ which contradicts the initial assumption $\text{dist}(A,B) \geq 3$.

Therefore, $n(IV(A))+n(IV(B))+n(IV(H))=\left|\overline{\mathcal X}\right|-(n(I)+n(II(A))+n(II(B))+n(III(A))+n(III(B)))\leq k-1-(h+1)=k-h-2$.
\end{proof}

\begin{lemma}\label{lemma:endpoints}
$n(II(A))+n(III(A))+2\ n(IV(A))=n(II(B))+n(III(B))+2\ n(IV(B))$.
\end{lemma}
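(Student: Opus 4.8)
The plan is to prove the identity by a double-counting argument on the incidences between blocks and the $A$-ends (respectively $B$-ends) that bound them, and then to invoke Lemma \ref{lemae(A)=e(B)} to conclude. An \emph{incidence} here is a pair $(\ell,[i,j])$ with $[i,j]\in\overline{\mathcal X}$ a block, $\ell\in e(A)$, and $\ell\in\{i-1,j+1\}$ (and symmetrically for $e(B)$). I would count these incidences in two ways.

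First I would count from the side of the blocks. For a block $[i,j]$, its two connected ends are $i-1$ and $j+1$, and I would simply read off from the eight type definitions how many of them lie in $e(A)$: a block contributes an $A$-end exactly when it is of type $II(A)$ or $III(A)$ (one $A$-end each), of type $IV(A)$ (two $A$-ends), or of type $IV(H)$ (one $A$-end and one $B$-end). Blocks of the remaining types contribute none. Summing over $\overline{\mathcal X}$, the total number of (block, bounding $A$-end) incidences is $n(II(A))+n(III(A))+2\,n(IV(A))+n(IV(H))$, and symmetrically the number of (block, bounding $B$-end) incidences is $n(II(B))+n(III(B))+2\,n(IV(B))+n(IV(H))$.

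Next I would count the same incidences from the side of the ends. An $A$-end $\ell$ bounds a block on its right exactly when $\ell+1\in\overline X$ and on its left exactly when $\ell-1\in\overline X$, so the number of blocks it bounds is $|\{\ell-1,\ell+1\}\cap\overline X|$. Since $\ell$ is an end this is at least $1$; it equals $2$ precisely when $\ell$ is an isolated singleton of $X$, that is, when $\ell\in e''(A)$, and equals $1$ when $\ell$ lies in a component of size at least $2$, i.e. $\ell\in e'(A)$. I would also note that a single end never bounds the same block on both sides, as that would force $X=\{\ell\}$, impossible since $|A\cup B|\geq k\geq 2$. Hence the $A$-incidence total is $|e'(A)|+2|e''(A)|$ and, likewise, the $B$-incidence total is $|e'(B)|+2|e''(B)|$.

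Equating the two counts gives $n(II(A))+n(III(A))+2\,n(IV(A))+n(IV(H))=|e'(A)|+2|e''(A)|$ together with its $B$-analogue. By Lemma \ref{lemae(A)=e(B)} the two right-hand sides coincide, so the left-hand sides are equal; cancelling the common summand $n(IV(H))$ yields the claimed equality. The main obstacle is getting the end-side count exactly right: one must carefully separate singleton $A$-ends (which bound two blocks, matching $e''(A)$) from $A$-ends sitting inside larger components (which bound one, matching $e'(A)$), and verify the no-double-bounding claim. Once this bookkeeping is aligned with the definitions of $e'(A)$ and $e''(A)$, the result follows immediately from the previous lemma.
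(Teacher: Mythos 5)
Your proposal is correct and follows essentially the same route as the paper: both count block--end incidences to get $n(II(A))+n(III(A))+n(IV(H))+2\,n(IV(A))=|e'(A)|+2|e''(A)|$ (and its $B$-analogue), then apply Lemma \ref{lemae(A)=e(B)} and cancel $n(IV(H))$. Your version merely spells out the bookkeeping (singleton ends bounding two distinct blocks versus non-singleton ends bounding one) that the paper states more tersely.
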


\begin{proof}
Notice that
every block of type  $T\in\{II(A), III(A), IV(H)\}$ uses one endpoint in $e(A)$, and every block of type
$T\in IV(A)$ uses two endpoints in $e(A)$. Furthermore, every endpoint in $e'(A)$ gets used once, while
every endpoint in $e''(A)$ is used twice. 
This yields
\[
n(II(A))+n(III(A))+n(IV(H))+2\ n(IV(A))=|e'(A)|+2|e''(A)|.
\]
Similarly,
\[
n(II(B))+n(III(B))+n(IV(H))+2\ n(IV(B))=|e'(B)|+2|e''(B)|.
\]
Therefore, Lemma \ref{lemae(A)=e(B)} yields:
\begin{align*}
n(II(A))+n(III(A))+n(IV(H))+2\ n(IV(A))=&    n(II(B))+n(III(B))+n(IV(H))+2\ n(IV(B)).
\end{align*}
Subtracting $n(IV(H))$ from both sides of the equation we obtain the result.
\end{proof}

We are ready now to construct the sets $A^*$ and $B^*$. We want these sets to satisfy:
\begin{itemize}
  \item[\textbf{(C1)}] $A^*,B^*\in [n]_2$;
	\item[\textbf{(C2)}] $A\setminus B\subseteq B^*$, $B\setminus A\subseteq A^*$;
	\item[\textbf{(C3)}] $A\cap A^*=\emptyset$, $B\cap B^*=\emptyset$;
	\item[\textbf{(C4)}] if  $[i,j]$ is a block of type $I,II(A),II(B),III(A)$ or $III(B)$, then $[i,j]\subseteq A^*\cup B^*$;
	\item[\textbf{(C5)}] if  $[i,j]$ is a block is of type $IV(A),IV(B),IV(H)$, every element of $[i,j]$ except at most one belongs to $A^*\cup B^*$.
\end{itemize}
Let us note the relation between the last two items and the definition of $m([i,j])$. For each block $[i,j]$ at least $m([i,j])$ elements belong to $A^*\cup B^*$.

We denote by $Z(i,j)$ the set of vertices in $[i,j]$ at odd distance of $i-1$ in the $n$-cycle in clockwise direction and by $Y(i,j)$ the set of vertices in $[i,j]$ at even distance of $i-1$ in the $n$-cycle in clockwise direction. Besides, let $Z'(i,j)$ be the set of vertices in $[i,j]$ at odd distance of $j+1$ in the $n$-cycle in counterclockwise direction and $Y'(i,j)$ the set of vertices in $[i,j]$ at even distance of $j+1$ in the $n$-cycle in counterclockwise direction.
For instance, if $[4,10]$ is a block, then $Z([4,10])=\{4,6,8,10\}=Z'([4,10])$
and $Y([4,10])=\{5,7,9\}=Y'([4,10])$. On the other hand, if $[4,9]$ is a block, then
$Z([4,9])=\{4,6,8\}=Y'([4,9])$, $Y([4,9])=\{5,7,9\}=Z'([4,9])$.

Note that $Z(i,j)$ and $Z'(i,j)$ are not empty. Besides, these sets are $2$-stable, i.e. they belong to $[n]_2$.
Let us assign elements from the blocks to the sets $A^*$ and $B^*$ by the following rules.

\begin{itemize}
    \item[$R1$] If $[i,j]$ is of type I with at least two elements, include $Z(i,j)$ in $A^*$ and $Y(i,j)$ in $B^*$.
    \item[$R2$] If $[i,j]$ is of type II(A), include $Z'(i,j)$ in $A^*$ and $Y'(i,j)$ in $B^*$.
	\item[$R3$] If $[i,j]$ is of type II(B), include $Z'(i,j)$ in $B^*$ and $Y'(i,j)$ in $A^*$.
	\item[$R4$] If $[i,j]$ is of type III(A), include $Z(i,j)$ in $A^*$ and $Y(i,j)$ in $B^*$.
	\item[$R5$] If $[i,j]$ is of type III(B), include $Z(i,j)$ in $B^*$ and $Y(i,j)$ in $A^*$.
	\item[$R6$] If $[i,j]$ is of type IV(A), include $Z(i,j)$ in $A^*$ and $Y(i,j)\setminus\{j\}$ in $B^*$.
	\item[$R7$] If $[i,j]$ is of type IV(B), include $Z(i,j)$ in $B^*$ and $Y(i,j)\setminus\{j\}$ in $A^*$.
	\item[$R8$] If $[i,j]$ is of type IV(H), with $i-1\in A\setminus B$ and $j+1\in B\setminus A$ include $Z(i,j)\setminus\{j\}$ in $A^*$ and $Y(i,j)$ in $B^*$. If $i-1\in B\setminus A$ and $j+1\in A\setminus B$ include $Z(i,j)\setminus\{j\}$ in $B^*$ and $Y(i,j)$ in $A^*$.
\end{itemize}

Notice that in rules $R1$-$R8$ the elements in blocks of the form $[i,i]$ of type $I$ are not assigned. These elements play a key role that we will mention further. Hence, we define the set $I'$ as the set of such elements, i.e. $I'=\{i\,|\, [i,i]\text{ is a block of type I}\}$.

Consider the sets $A^*$ and $B^*$ constructed following the rules above, and also including to $A^*$ the elements in $B\setminus A$ and assigning to $B^*$ the elements in $A\setminus B$. It is not hard to see that $A^*$ and $B^*$ satisfy:
\begin{itemize}
\item $A^*,B^*\in [n]_2$;
\item  $A\cap A^*=A^*\cap B^*=B\cap B^*=\emptyset$;
\item $|A^* \cap B|=|B^*\cap A|=k-h$;
\item for every block of type $T\in \{II(A),III(A),IV(A)\}$, $|A^*\cap T|\geq 1$;
\item for every block of type $T\in \{II(B),III(B),IV(B)\}$, $|B^*\cap T|\geq 1$; and
\item for every block $[i,j]$ of type $I$ with at least two elements, $|A^*\cap I|\geq 1$ 
and $|B^* \cap I| \geq 1$.
\end{itemize}

From sets $A^*$ and $B^*$ we can construct two vertices $A'\subset A^*$ and $B' \subset B^*$ in $[n]_2^k$ as follows.
\begin{lemma}\label{lemma:htoh-1}
Let $A,B\in [n]_2^k$ with $|A\cap B|=h$. Then, there exist $A',B'\in [n]_2^k$ such that $|A'\cap B'|\leq h-1$ and $A\cap A'=B\cap B'=\emptyset$.
\end{lemma}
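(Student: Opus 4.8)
The plan is to take the sets $A^*$ and $B^*$ already built, together with their listed properties ($A^*,B^*\in[n]_2$, $A\cap A^*=A^*\cap B^*=B\cap B^*=\emptyset$, $|A^*\cap B|=|B^*\cap A|=k-h$, and the per-block lower bounds), and to obtain $A'$ and $B'$ as $k$-element subsets of $A^*\cup I'$ and $B^*\cup I'$ respectively, distributing the free elements of $I'$ so as to keep $A'\cap B'$ small. First I would record why this is the only thing to control: each element of $I'$ is the lone element of a singleton block of type $I$, so both of its cycle-neighbours lie in $e(H)=A\cap B$ and hence in neither $A^*$ nor $B^*$; therefore any subset of $A^*\cup I'$ is $2$-stable and disjoint from $A$, and symmetrically for $B^*\cup I'$. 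Since $A^*\cap B^*=\emptyset$ and $I'$ is disjoint from $A^*$ and $B^*$, the intersection $A'\cap B'$ consists exactly of the elements of $I'$ placed in both sets. Writing $|A^*|=(k-h)+p$ and $|B^*|=(k-h)+q$, where $p,q$ count the elements assigned to $A^*,B^*$ by the rules $R1$–$R8$, the statement reduces to two numerical claims: \emph{feasibility}, that $p+|I'|\ge h$ and $q+|I'|\ge h$ so each of $A^*,B^*$ can be completed to size $k$ from $I'$; and \emph{economy}, that this completion forces at most $h-1$ elements of $I'$ into both sets.

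For feasibility I would use the per-block bounds. Every block of type $II(A),III(A)$ or $IV(A)$ puts at least one element into $A^*$, and every type-$I$ block with at least two elements puts at least one into $A^*$; hence $p\ge n(II(A))+n(III(A))+n(IV(A))+\big(n(I)-|I'|\big)$, so $p+|I'|\ge n(II(A))+n(III(A))+n(IV(A))+n(I)$. Using the identity $h=n(I)+\tfrac12\big(n(II(A))+n(II(B))+n(III(A))+n(III(B))\big)$ obtained in the proof of Lemma \ref{ecuacion1}, the inequality $p+|I'|\ge h$ is equivalent to $n(II(A))+n(III(A))+2\,n(IV(A))\ge n(II(B))+n(III(B))$, which holds by Lemma \ref{lemma:endpoints} because its right-hand side equals $n(II(B))+n(III(B))+2\,n(IV(B))$. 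The symmetric computation gives $q+|I'|\ge h$.

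For economy I would observe that, minimising the number of elements of $I'$ used twice, feasibility forces one to place in both sets exactly $\max\{0,\,2h-(p+q+|I'|)\}$ of them, and this number equals $|A'\cap B'|$. So it suffices to prove $p+q+|I'|\ge h+1$. Each block contributes at least $m([i,j])$ elements to $A^*\cup B^*\cup I'$ (the element of a type-$I$ singleton landing in $I'$), whence $p+q+|I'|\ge\sum_{[i,j]\in\overline{\mathcal X}}m([i,j])$. Now $\sum m=\big|[n]\setminus X\big|-\big(n(IV(A))+n(IV(B))+n(IV(H))\big)$ with $\big|[n]\setminus X\big|=n-(2k-h)=r+h$; from the proof of Lemma \ref{ecuacion1} the number of type-$IV$ blocks is at most $\left|\overline{\mathcal X}\right|-(h+1)$, and $\left|\overline{\mathcal X}\right|\le r+h$ since every block is nonempty. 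Combining, $\sum m\ge(r+h)-\big((r+h)-(h+1)\big)=h+1$, so $p+q+|I'|\ge h+1$ and $|A'\cap B'|\le h-1$.

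Finally I would assemble everything: enlarge $A^*$ and $B^*$ to exactly $k$ elements each by adding elements of $I'$, reusing an element in both only when unavoidable, which by the two claims happens at most $h-1$ times; the resulting $A',B'\in[n]_2^k$ satisfy $A\cap A'=B\cap B'=\emptyset$ and $|A'\cap B'|\le h-1$. I expect the feasibility step to be the main obstacle: a priori the more deficient of $A^*,B^*$ might not be fillable at all from $I'$, and ruling this out is exactly where the balance between $A$-type and $B$-type blocks provided by Lemma \ref{lemma:endpoints} is indispensable. The economy bound is comparatively routine once one recognises the minimal reuse as $\max\{0,\,2h-\sum m\}$ and feeds in the block count from Lemma \ref{ecuacion1}.
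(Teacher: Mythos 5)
Your proof is correct, and it shares the paper's skeleton: the rule-built sets $A^*,B^*$, the pool $I'$ of singleton type-$I$ blocks as the only possible source of overlap, and Lemma \ref{lemma:endpoints} as the balancing tool for feasibility (your inequality $n(II(A))+n(III(A))+2\,n(IV(A))\geq n(II(B))+n(III(B))$ is exactly what the paper extracts from that lemma via its $s,t$ bookkeeping, so the feasibility halves of the two arguments coincide). The genuine divergence is in how the $-1$ is obtained. The paper assigns \emph{all} of $I'$ to both sets, so that $|A^*\cap B^*|\leq |I'|=h-s-r'$ (where $r'$ is the number of multi-element type-$I$ blocks, unfortunately also called $r$ in the paper's proof), and then gets $h-s-r'\leq h-1$ from $s\geq 1$, i.e.\ from the existence of at least one type-$II$ or type-$III$ block. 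You instead distribute $I'$ optimally, bound the forced reuse by $\max\{0,\,2h-(p+q+|I'|)\}$, and prove $p+q+|I'|\geq \sum_{[i,j]\in\overline{\mathcal X}} m([i,j])\geq h+1$ by combining the count of non-type-$IV$ blocks ($\geq h+1$) from the proof of Lemma \ref{ecuacion1} with the trivial bound $\left|\overline{\mathcal X}\right|\leq r+h$. Note that $\sum m\geq h+1$ is \emph{not} the statement of Lemma \ref{ecuacion1} (which gives $n-3k+2h+2$, a weaker bound whenever $h<k-r-1$), so you are right to cite its proof rather than its statement; this intermediate inequality is valid and never appears explicitly in the paper. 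Both routes bottom out at the same combinatorial fact — at least two "mixed" blocks exist because $A\neq B$ and $A\cap B\neq\emptyset$ — so neither is more general, but yours is marginally sharper (it can certify overlap $0$ even when $I'\neq\emptyset$), at the price of an explicit minimum-overlap optimization that the paper's cruder "put all of $I'$ in both sets" shortcut avoids. One small caveat, which applies equally to the paper: the argument uses the section's standing assumptions $A\neq B$ and $h\geq 1$, which the lemma as literally stated does not include.
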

\begin{proof}
In order to obtain vertices $A'\subset A^*$ and $B'\subset B^*$ such that $|A'| = |B'| = k$, $A\cap A'=B\cap B'=\emptyset$, and $|A' \cap B'| \leq h-1$, we will use the elements of $I'$.

First, notice that for every
element $i\in A\cap B$ the block of the form $[i+1,j]$ is a block of type $T\in\{I,II(A),II(B)\}$,
thus $h=n(I)+n(II(A))+n(II(B))$. Similarly, for every
element $j\in A\cap B$ the block of the form $[i,j-1]$ is a block of type $T\in\{I,III(A),III(B)\}$,
thus $h=n(I)+n(III(A))+n(III(B))$. Then
\[
2h=2n(I)+n(II(A))+n(II(B))+n(III(A))+n(III(B)).
\]
It follows that $n(II(A))+n(III(A))+n(II(B))+n(III(B))$ is even, so let $s\in\mathbb N$ such that $2s=n(II(A))+n(III(A))+n(II(B))+n(III(B))$.
Then, $2h=2n(I)+2s$, which means $n(I)=h-s$.
Furthermore, assume w.l.o.g. that $n(II(A))+n(III(A))=s+t\geq n(II(B))+n(III(B))=s-t$, for some $t \in \mathbb{N}$. By Lemma
\ref{lemma:endpoints}, 
\[2n(IV(B)) \geq n(II(A))+n(III(A))-n(II(B))-n(III(B))= s+t-(s-t)= 2t.\]
Thus, $IV(B) \geq t$. Hence, $n(II(A))+n(III(A))+n(IV(A))\geq s$ and $n(II(B))+n(III(B))+n(IV(B))\geq s$.
Therefore, from Rules $R2-R7$, we have 
$A^*$ has at least $s$ elements from blocks of types $II(A), III(A)$ and $IV(A)$, and $B^*$
has at least $s$ elements from blocks of types $II(B),III(B)$ and $IV(B)$.

Let $r$ be the amount of blocks $[i,j]$ of type $I$ with at least two elements. Then
from previous remarks and rule $R1$, both $A^*$ and $B^*$ have at least $r$ elements from these blocks. Furthermore, $|I'|=h-s-r$.

Counting again the size of $A^*$, we have
\begin{itemize}
\item $A^*$ has $k-h$ elements from $B$;
\item $A^*$ has at least $s$ elements from blocks of types $II(A),III(A)$ and $IV(A)$;
\item $A^*$ has at least $r$ elements from blocks of types $I$ with at least two elements.
\end{itemize}
Thus, $|A^*|\geq k-h+s+r=k-(h-s-r)$.
This means that if we assign every element in $I'$ to $A^*$, then $|A^*|\geq k$. Similarly, if we assign every element in $I'$ to $B^*$,
then $|B^*|\geq k$.
This also yields $|A^*\cap B^*|\leq h-s-r$.

Notice that there must exist at least one block of type $T$ for some $T\in \{II(A), II(B), III(A), III(B)\}$, as otherwise only blocks of type $I$ would exists, which implies $A=B$.
Hence $s\geq 1$, and $h-s-r\leq h-1$. Therefore, taking $A'\subset A^*$ and $B'\subset B^*$,
with $|A'|=|B'|=k$, we have that $A \cap A' = B \cap B' = \emptyset$ and $|A' \cap B'| \leq |A^* \cap B^*| \leq h-s-r \leq h-1$ which proves the result.
\end{proof}
The following result derives directly from the proof of Lemma \ref{lemma:htoh-1}.

\begin{corollary}
\label{rem:no[ii]}
If there are no blocks $[i,i]$ of type $I$, then $dist(A,B)\leq3$.
\end{corollary}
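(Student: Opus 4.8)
The plan is to extract the conclusion directly from the construction and counting carried out in the proof of Lemma \ref{lemma:htoh-1}, specialising to the degenerate case where $I'$ is empty. First I would translate the hypothesis: since $I'=\{i\mid [i,i]\text{ is a block of type }I\}$, the assumption that there are no blocks $[i,i]$ of type $I$ is exactly $I'=\emptyset$. Recalling from that proof the quantities $2s=n(II(A))+n(III(A))+n(II(B))+n(III(B))$ and $r$ (the number of type-$I$ blocks with at least two elements), together with the identity $|I'|=h-s-r$, the hypothesis becomes $h-s-r=0$.

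Next I would reread the two quantitative facts established there and substitute $h-s-r=0$. The size count gave $|A^*|\geq k-h+s+r=k-(h-s-r)$ and, symmetrically, $|B^*|\geq k-(h-s-r)$, so both sets already contain at least $k$ elements \emph{before} any element of $I'$ is distributed; this is exactly why no distribution is needed when $I'=\emptyset$. Moreover, the construction of $A^*$ and $B^*$ via Rules $R1$--$R8$ (together with placing $B\setminus A$ in $A^*$ and $A\setminus B$ in $B^*$) already guarantees $A\cap A^*=A^*\cap B^*=B\cap B^*=\emptyset$ and $A^*,B^*\in[n]_2$; it is precisely the distribution of $I'$ that could introduce an overlap between $A^*$ and $B^*$, so with $I'=\emptyset$ we keep $A^*\cap B^*=\emptyset$.

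Finally I would assemble the path. Choosing any $A'\subseteq A^*$ and $B'\subseteq B^*$ with $|A'|=|B'|=k$, $2$-stability is inherited by subsets, so $A',B'\in[n]_2^k$, and $A\cap A'=B\cap B'=A'\cap B'=\emptyset$. Since adjacency in $\operatorname{SG}(n,k)$ is disjointness, $A\sim A'\sim B'\sim B$ is a walk of length three, whence $\text{dist}(A,B)\leq 3$.

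The main obstacle is really just a bookkeeping check rather than a new idea: I must confirm that the bounds $|A^*|\geq k$ and $|B^*|\geq k$ genuinely survive once $I'=\emptyset$. In the general argument the elements of $I'$ served solely to top up $A^*$ or $B^*$ towards size $k$ (at the cost of a possible overlap bounded by $|I'|$), so I would verify that with zero deficiency $h-s-r=0$ no top-up is required---exactly what $|A^*|\geq k-(h-s-r)$ delivers---and that the disjointness $A^*\cap B^*=\emptyset$, being untouched by any $I'$-distribution, persists.
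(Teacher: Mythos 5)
Your proposal is correct and follows essentially the same route as the paper's proof: both specialise the construction and counting in the proof of Lemma \ref{lemma:htoh-1} to the case $I'=\emptyset$, observe that then $|A^*|\geq k$ and $|B^*|\geq k$ with $A\cap A^*=A^*\cap B^*=B\cap B^*=\emptyset$ already in hand, and extract $k$-subsets to obtain a path of length three. Your write-up merely makes explicit a couple of steps the paper leaves implicit (passing to $A'\subseteq A^*$, $B'\subseteq B^*$ of size exactly $k$ and noting that $2$-stability is inherited by subsets).
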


\begin{proof}
Notice that if there are no blocks $[i,i]$ of type $I$ in the proof of Lemma \ref{lemma:htoh-1} then, $|I'| = h-s-r = 0$ and thus we have that $|A^*| \geq k$, $|B^*| \geq k$ and $A \cap A^* = A^* \cap B^* = B \cap B^* = \emptyset$, and thus, $\text{dist}(A,B) \leq 3$, which implies actually that $\text{dist}(A,B) = 3$.
\end{proof}


\begin{lemma}\label{lemma:dist1+2h}
Let $A,B\in [n]_2^k$ with $|A\cap B|=h$. Then, $dist(A,B)\leq 1+2h$.
\end{lemma}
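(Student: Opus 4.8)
The plan is to prove the inequality by induction on $h=|A\cap B|$, treating Lemma \ref{lemma:htoh-1} as the engine that trades a reduction of the intersection size for two extra edges of a path. In other words, each application of that lemma replaces the pair $(A,B)$ by a nearby pair whose intersection is strictly smaller, at a cost of one edge at each end.

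For the base case $h=0$ we have $A\cap B=\emptyset$, so $A$ and $B$ are two distinct disjoint $k$-subsets, hence adjacent in $\operatorname{SG}(n,k)$; thus $\text{dist}(A,B)=1=1+2\cdot 0$. The degenerate case $A=B$, which forces $h=k$ and $\text{dist}(A,B)=0\leq 1+2h$, is handled separately and trivially.

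For the inductive step, assume the bound holds for every pair of vertices whose intersection has size strictly less than $h$, and take $A,B\in[n]_2^k$ with $|A\cap B|=h\geq 1$ and $A\neq B$. Apply Lemma \ref{lemma:htoh-1} to obtain $A',B'\in[n]_2^k$ with $A\cap A'=B\cap B'=\emptyset$ and $|A'\cap B'|\leq h-1$. Since $A\cap A'=\emptyset$ and $A,A'$ are distinct $k$-sets, $A$ and $A'$ are adjacent, so $\text{dist}(A,A')=1$; likewise $\text{dist}(B,B')=1$. Writing $h'=|A'\cap B'|\leq h-1$, the induction hypothesis gives $\text{dist}(A',B')\leq 1+2h'\leq 1+2(h-1)=2h-1$. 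The triangle inequality along a shortest walk from $A$ to $A'$, then to $B'$, then to $B$ yields
\[
\text{dist}(A,B)\leq \text{dist}(A,A')+\text{dist}(A',B')+\text{dist}(B',B)\leq 1+(2h-1)+1=1+2h,
\]
completing the induction.

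Since essentially all of the combinatorial content is already packaged inside Lemma \ref{lemma:htoh-1}, this result is a short corollary and I expect no genuinely hard step. The only points that require care are the boundary cases ($h=0$ and $A=B$); verifying that $A\cap A'=\emptyset$ really makes $A$ and $A'$ adjacent rather than equal, which holds because $|A|=|A'|=k\geq 1$; and confirming that the induction hypothesis may be applied to the pair $(A',B')$ even though the surrounding discussion carries the standing assumption $\text{dist}(A,B)\geq 3$. This last point is legitimate because both Lemma \ref{lemma:htoh-1} and the base case are valid for all pairs with intersection size $h'<h$ irrespective of that assumption.
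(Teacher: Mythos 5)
Your proof is correct and is essentially the paper's own argument: the paper applies Lemma \ref{lemma:htoh-1} iteratively $h$ times to reach a disjoint pair, which is exactly your induction unrolled, with each application contributing the two edges $A\!-\!A'$ and $B'\!-\!B$ of the final walk. Your version is simply more explicit about the base case, the degenerate case $A=B$, and the legitimacy of applying Lemma \ref{lemma:htoh-1} to intermediate pairs, points the paper's two-line proof leaves implicit.
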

\begin{proof}
By applying Lemma \ref{lemma:htoh-1} $h$ times, we obtain two vertices $A^{(h)},B^{(h)}\in [n]_2^k$, with $A^{(h)}\cap B^{(h)}=\emptyset$. Hence, $dist(A,B)\leq 1+2h$.
\end{proof}

\begin{corollary}\label{cor:2+2h}
Let $A,B,Y\in [n]_2^k$ with $|A\cap Y| = h'$, $|Y\cap B| = h''$, and let $h^* = h'+h''$ with $h^* \geq 2$. Then, $dist(A,B)\leq 2+2h^*$.
\end{corollary}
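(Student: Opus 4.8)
The plan is to reduce the corollary to Lemma \ref{lemma:dist1+2h} by treating $Y$ as an intermediate vertex and invoking the triangle inequality. Since $\operatorname{SG}(n,k)$ is connected and $\text{dist}$ is its shortest-path metric, for the three given vertices we have $\text{dist}(A,B)\le \text{dist}(A,Y)+\text{dist}(Y,B)$, so the entire argument reduces to bounding the two summands on the right separately, each by a single application of the previous lemma.

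First I would apply Lemma \ref{lemma:dist1+2h} to the pair $A,Y$. Because $|A\cap Y|=h'$, this yields $\text{dist}(A,Y)\le 1+2h'$. Applying the same lemma to the pair $Y,B$, for which $|Y\cap B|=h''$, gives $\text{dist}(Y,B)\le 1+2h''$. Substituting both estimates into the triangle inequality produces $\text{dist}(A,B)\le (1+2h')+(1+2h'')=2+2(h'+h'')=2+2h^*$, which is exactly the asserted bound.

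The only facts used are that $A,Y,B$ are all vertices of $\operatorname{SG}(n,k)$ (so Lemma \ref{lemma:dist1+2h} applies to each pair and each distance is finite) together with the metric property of graph distance, so there is no genuine obstacle to overcome. In particular, the hypothesis $h^*\ge 2$ plays no role in establishing the inequality itself; it merely delimits the regime in which this corollary will later be invoked, and one could observe separately that the bound continues to hold in the remaining cases $h^*\in\{0,1\}$.
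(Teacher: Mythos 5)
Your proof is correct and is essentially identical to the paper's own (one-line) argument: apply Lemma \ref{lemma:dist1+2h} to the pairs $(A,Y)$ and $(Y,B)$ and combine via the triangle inequality. Your side remark that the hypothesis $h^*\geq 2$ is not needed for the inequality itself is also accurate; it merely reflects the regime in which the corollary is later applied.
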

\begin{proof}
The result follows by applying Lemma \ref{lemma:dist1+2h} to bound $dist(A,Y)$ and $dist(Y,B)$.
\end{proof}

Lemmas \ref{ecuacion1} and \ref{lemma:dist1+2h} and Corollaries  \ref{rem:no[ii]} and \ref{cor:2+2h} will be used to compute the diameter of $\text{SG}(n,k)$ when $2k+2 \leq n \leq 4k+3$ in the next subsections.

\subsection{Case $3k-2 \leq n \leq 4k-3$}
Let $n = 2k+r$ with $k > 2$ and $k-2 \leq r \leq 2k-3$. Let us consider the construction of sets $A^*$ and $B^*$ given in Section \ref{sec-sets}. Let remark that in the proof of Lemma \ref{lemma:htoh-1}, we may not need to assign every element in $I'$ to both $A^*$ and $B^*$. Note that following the rules $R2$-$R8$, from each block $[i,j]$ of type $II$, $III$ or $IV$, we have included at least $m([i,j])$ elements in $A^*\cup B^*$ such that $A^*\cap B^*=\emptyset$ and $A^*,B^*\in [n]_2$.  
If we do not assign the $h-s-r$ elements in $I'$, by Lemma \ref{ecuacion1}, we have assigned at least $n-3k+2h+2-(h-s-r)$ elements from blocks to $A^*\cup B^*$, $k-h$ elements from $A$ and $k-h$ elements from $B$. This means that before assigning the $h-s-r$ elements in $I'$, we have assigned $n-k+2-(h-s-r)$ elements to $A^*\cup B^*$. Hence,  if $n-k+2$ is large enough, we may be able to assign the elements in such blocks maintaining $A^*\cap B^*=\emptyset$.

Assume $n\geq 3k-2$. Then, before assigning the elements in $I'$, we have assigned 
at least $n-k+2-(h-s-r)\geq 2k-h+s+r$ to $A^*\cup B^*$, at least $k-h+s+r$ elements to $A^*$, and
at least $k-h+s+r$ elements to $B^*$. Let $0 \leq a \leq h-s-r$ and assume that we have assigned $k-h+s+r+a$ elements to $A^*$.
This means that we assigned at least $2k-h+s+r-(k-h+s+r+a)=k-a$ elements to $B^*$.
If $h-s-r-a>0$, assign that many elements from $I'$ to $A^*$ and the remaining $a$ elements to $B^*$,
otherwise, if  $h-s-r-a = 0$, assign every element in $I'$ to $B^*$. Then $|A^*|\geq k$, $|B^*|\geq k$,
and $A^*\cap B^*=\emptyset$. Let $A'\subset A^*$ and $B'\subset B^*$, such that 
$|A'|=|B'|=k$. Then we have $A\cap A'=A'\cap B'=B'\cap B=\emptyset$, which means 
that $dist(A,B)\leq 3$. As by hypothesis, $n < 4k-2$ then, by Theorem \ref{cor:s2diam2}, we deduce that $\text{dist}(A,B) \geq 3$. 

\begin{theorem}\label{th:diam3}
Let $n = 2k+r$ with $k>2$ and $k-2 \leq r \leq 2k-3$. Then, $\operatorname{D}(\operatorname{SG}(n,k))=3$.
\end{theorem}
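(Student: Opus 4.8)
The plan is to prove the two bounds $D(\operatorname{SG}(n,k)) \geq 3$ and $D(\operatorname{SG}(n,k)) \leq 3$ separately, using the apparatus built in Section \ref{sec-sets}. Note first that the hypothesis $k-2 \leq r \leq 2k-3$ is exactly the range $3k-2 \leq n \leq 4k-3$, so I may use both $n \geq 3k-2$ (the ``slack'' that will drive the upper bound) and $r < 2k-2$ (the obstruction to diameter $2$). The lower bound is immediate: since $r \leq 2k-3 < 2k-2$, Theorem \ref{cor:s2diam2} rules out $D = 2$, and the explicit pair $A = \{1,4,6,\dots,2k\}$, $B = \{1,5,7,\dots,2k+1\}$ exhibited before Theorem \ref{cor:s2diam2} satisfies $\operatorname{dist}(A,B)=3$ because $[n]\setminus(A\cup B)$ carries no $2$-stable $k$-subset; hence $D \geq 3$.

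For the upper bound I would show $\operatorname{dist}(A,B) \leq 3$ for every pair of vertices. If $A=B$ or $A\cap B=\emptyset$ the distance is $0$ or $1$, so I may assume $A\neq B$ and $A\cap B\neq\emptyset$, and also (otherwise we are already done) that $\operatorname{dist}(A,B)\geq 3$; this last assumption is what lets Lemma \ref{ecuacion1} apply. I then run the assignment $R1$--$R8$ to build $A^*,B^*$, whose output already satisfies $A\cap A^*=A^*\cap B^*=B\cap B^*=\emptyset$ and places at least $m([i,j])$ elements of each block into $A^*\cup B^*$. The aim is to enlarge these to $|A^*|,|B^*|\geq k$ while keeping $A^*\cap B^*=\emptyset$; choosing then $A'\subset A^*$ and $B'\subset B^*$ with $|A'|=|B'|=k$ produces the induced $P_4$ given by $A - A' - B' - B$, which forces $\operatorname{dist}(A,B)\leq 3$.

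The crux, and the step I expect to be the main obstacle, is disposing of the singleton type-$I$ blocks collected in $I'$, which $R1$--$R8$ deliberately leave unassigned because each such element is free to go into either $A^*$ or $B^*$. Here the inequality $n\geq 3k-2$ is decisive. Writing $\rho$ for the number of type-$I$ blocks of size at least two and $s$ as in Lemma \ref{lemma:htoh-1}, so that $|I'|=h-s-\rho$, I would combine Lemma \ref{ecuacion1} with the $2(k-h)$ elements coming from $A\setminus B$ and $B\setminus A$ to count that, before touching $I'$, at least $n-k+2-|I'|\geq 2k-|I'|$ elements already lie in $A^*\cup B^*$, with each of $A^*$ and $B^*$ holding at least $k-|I'|$ of them. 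A balancing argument then closes the gap: if $A^*$ currently holds $k-|I'|+a$ elements then $B^*$ holds at least $k-a$, so assigning $|I'|-a$ of the free elements of $I'$ to $A^*$ and the remaining $a$ to $B^*$ brings both sets to size at least $k$ without ever creating an overlap. This yields $\operatorname{dist}(A,B)\leq 3$, and together with the lower bound we conclude $D(\operatorname{SG}(n,k))=3$.
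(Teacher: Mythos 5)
Your proposal is correct and takes essentially the same approach as the paper: the lower bound comes from Theorem \ref{cor:s2diam2} together with the explicit pair $\{1,4,6,\dots,2k\}$, $\{1,5,7,\dots,2k+1\}$, and the upper bound reruns the $R1$--$R8$ construction, invoking Lemma \ref{ecuacion1} and $n\geq 3k-2$ to carry out exactly the same balancing distribution of the unassigned singleton type-$I$ blocks of $I'$ between $A^*$ and $B^*$ before extracting the $P_4$ on $A,A',B',B$. Your only departure is cosmetic: writing $\rho$ for the number of type-$I$ blocks of size at least two avoids the paper's collision between that count and the parameter $r$ in $n=2k+r$.
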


Notice that in Rules $R1-R8$ we have not assigned elements in blocks of type $IV(H)$ of the form $[i,i]$.
\begin{observation}\label{rem:R1-R8}
Let $k> 2$ and $3k-2\leq n\leq 4k-3$. If two vertices $A,B$ are at distance $3$, there exist two vertices $A',B'$ constructed following the rules $R1$-$R8$ such that $\{A,A',B',B\}$ induce a $P_4$ in $\operatorname{SG}(n,k)$. Besides, if $[i,i]$ is a block of type $IV(H)$, $i\notin A'\cup B'$.
\end{observation}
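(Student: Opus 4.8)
The plan is to obtain both assertions directly from the distance-$3$ construction performed just before Theorem~\ref{th:diam3}, specializing it to the given pair $A,B$. Since $\mathrm{dist}(A,B)=3$ we have $A\neq B$ and $A\cap B\neq\emptyset$, hence $h=|A\cap B|$ satisfies $1\le h\le k-1$; and the hypothesis $3k-2\le n\le 4k-3$ is precisely the regime where that construction is valid, since it requires $n\ge 3k-2$ while $n\le 4k-3<4k-2$ keeps $A,B$ out of the diameter-$2$ range of Theorem~\ref{cor:s2diam2}. First I would invoke that construction to produce sets $A^*,B^*\in[n]_2$, built from rules $R1$--$R8$ together with the inclusions $B\setminus A\subseteq A^*$ and $A\setminus B\subseteq B^*$, satisfying $|A^*|,|B^*|\ge k$ and $A\cap A^*=A^*\cap B^*=B\cap B^*=\emptyset$.

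Next I would choose $A'\subseteq A^*$ and $B'\subseteq B^*$ of size $k$, \emph{retaining} all of $B\setminus A$ inside $A'$ and all of $A\setminus B$ inside $B'$. This is possible because $|A^*\setminus(B\setminus A)|=|A^*|-(k-h)\ge h$, so $B\setminus A$ can be completed to a $k$-set within $A^*$, and symmetrically for $B'$. I would then verify that $A-A'-B'-B$ is an induced $P_4$: the three path edges hold since $A\cap A'=\emptyset$, $A'\cap B'\subseteq A^*\cap B^*=\emptyset$ and $B\cap B'=\emptyset$; and the three possible chords are absent because $A\cap B$ has $h\ge 1$ elements, $A\cap B'\supseteq A\setminus B\neq\emptyset$, and $A'\cap B\supseteq B\setminus A\neq\emptyset$, where nonemptiness of $A\setminus B$ and $B\setminus A$ follows from $A\neq B$.

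For the remaining claim I would inspect the effect of rule $R8$ on a block of the form $[i,i]$ of type $IV(H)$. As $i$ is at (clockwise) distance $1$ from $i-1$, one has $Z(i,i)=\{i\}$ and $Y(i,i)=\emptyset$, so $R8$ contributes $Z(i,i)\setminus\{i\}=\emptyset$ to one of $A^*,B^*$ and $Y(i,i)=\emptyset$ to the other; thus $i$ is assigned nowhere. Moreover $i\in[n]\setminus(A\cup B)$, so $i$ belongs to neither $B\setminus A$ nor $A\setminus B$ and is not captured by those inclusions either. Hence $i\notin A^*\cup B^*$, and since $A'\subseteq A^*$ and $B'\subseteq B^*$ we conclude $i\notin A'\cup B'$.

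The construction and the induced-$P_4$ check are essentially bookkeeping; the one point needing care---and the main obstacle---is guaranteeing that $A'$ and $B'$ can simultaneously retain $B\setminus A$ and $A\setminus B$ while meeting the size requirement $|A'|=|B'|=k$. This is secured by the cardinality estimates $|A^*|,|B^*|\ge k$ coming from Lemma~\ref{ecuacion1} and the counting in the proof of Lemma~\ref{lemma:htoh-1}, which is exactly what keeps the intersections $A\cap B'$ and $A'\cap B$ nonempty and thereby rules out a paw in favour of a $P_4$.
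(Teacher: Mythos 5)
Your proposal is correct and follows essentially the paper's own (implicit) argument: invoke the $A^*,B^*$ construction that the paper develops for $n\geq 3k-2$ just before Theorem~\ref{th:diam3}, pass to $k$-element subsets $A'\subseteq A^*$, $B'\subseteq B^*$, and observe that rule $R8$ applied to a singleton block $[i,i]$ of type $IV(H)$ assigns nothing, since $Z(i,i)\setminus\{i\}=Y(i,i)=\emptyset$. The only cosmetic difference is your retention of $B\setminus A$ in $A'$ and $A\setminus B$ in $B'$ to rule out chords; this extra care is harmless but unnecessary, because $\mathrm{dist}(A,B)=3$ already forbids every chord (an edge $AB'$, $A'B$ or $AB$ would produce a path of length at most $2$ between $A$ and $B$).
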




\subsection{Case $2k+2\leq n\leq 3k-3$}
In this section, we show that $\operatorname{D}(\operatorname{SG}(2k+r,k)) \leq k-r+1$ when $2 \leq r \leq k-3$, or equivalently, $\operatorname{D}(\operatorname{SG}(3k-2-m,k))\leq 3+m$ for $1\leq m\leq k-4$.
To do this, given two vertices $A,B \in [n]_2^k$, we use two different operations that yield
sets $\tilde{A}$ and $\tilde{B}$ in $[n+1]^k_2$. We apply the operations successively
until we obtain two vertices $A^p,B^p\in [n+p]^k_2$ with $dist(A^p,B^p)\leq 3$ in $\text{SG}(n+p,k)$.
If $dist(A^p,B^p)=2$ in $\text{SG}(n+p,k)$, we obtain a vertex $Y \in [n]_2^k$ such that $dist(A,Y)+dist(B,Y)\leq m+3$.
If $dist(A^p,B^p)=3$ in $\text{SG}(n+p,k)$, we obtain two vertices $A',B' \in [n]_2^k$ using Rules $R1$-$R8$ such that $A\cap A'=B\cap B'=\emptyset$,
and $dist(A',B')\leq 1+m$.\\

Now we can begin describing the operations that yield sets in $[n+1]^k_2$. The first 
operation will work by adding an element in a component $C\in \mathcal{X}$, with $|C|\geq 3$;
the second operation will work by adding an element to a block $[t,t]$ of type $I$.
Because we are going to be talking about distances in Schrijver graphs with different values
of $n$,  we denote $dist_n(A,B)$ the distance between $A$ and $B$ in $\operatorname{SG}(n,k)$.\\

Let $A,B\in [n]_2^k$ such that $dist_n(A,B)\geq 3$ and $X=A\cup B$. From item $(iii)$ in Observation \ref{observation3}, there exist $C\in\mathcal{X}$ such that $|C|\geq 3$. Consider $C=a_i\ b_j\ a_{i+1}\ldots$ (first case) or $C=b_j\ a_i\ b_{j+1}\ldots$ (second case).
To obtain sets in $[n+1]_2^k$ we will add an extra element between the second and third elements
in $C$, i.e. between $b_j$ and $a_{i+1}$ in the first case, and  between $a_i$ and $b_{j+1}$ in the second case. 
Hence, we assign to $A$ and $B$ (in any case) the following sets in $[n+1]$, $A^+=\{a_1,\ldots,a_i,a_{i+1}+1,\ldots,a_k+1\}$ and $B^+=\{b_1,\ldots,b_{j},b_{j+1}+1,\ldots,b_k+1\}$.
Notice that $|A^+|=|B^+|=k$, as we did not increase the amount of elements. Furthermore,
the sets are $2$-stable, because $A$ and $B$ are $2$-stable ($a_1$ and $a_{k}+1$ cannot be consecutive, because that would imply that $a_1=1$ and $a_k+1=n+1$,
and $a_k=n$). Therefore, $A^+,B^+\in [n+1]_2^k$.

By adding this new element, we formed a new block.
Observe that if $C=a_i\ b_j\ a_{i+1}\ldots$, then $a_{i+1}\notin A^+\cup B^+$, and if $C=b_j\ a_i\ b_{j+1}$, then $b_{j+1}\notin A^+\cup B^+$. Thus, in the first case, $[a_{i+1},a_{i+1}]$ is a block of type $IV(H)$ in $\overline{\mathcal{X}}$ with $X=A^+\cup B^+$. Analogously, in the second case, $[b_{j+1},b_{j+1}]$ is a block of type $IV(H)$ in $\overline{\mathcal{X}}$.

Concerning the inverse operation of operation $+$, for $Y\in [n+1]^k_2$, we define a set $Y^-=\{y^-_1,\dots,y^-_k\}\in [n]^k$ by deleting the element that we added. In other words, considering $u=a_{i+1}$ if we are in the first case
and $u=b_{j+1}$ if we are in the second case, we have that $y^-_r=y_r$ if $y_r<u$ and $y^-_r=y_r-1$ if $y_r\geq u$.\\


The following remark is straightforward from the previous definitions.

\begin{observation}\label{rem:empty}
If $Y\cap A^{+}\cap B^{+}=\emptyset$ then $Y^-\cap A\cap B=\emptyset$.
\end{observation}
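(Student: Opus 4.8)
The plan is to recognize both operations as shift maps and then argue elementwise. Write $u=a_{i+1}$ in the first case and $u=b_{j+1}$ in the second; this is exactly the value deleted by the $-$ operation. Define the insertion map $\phi\colon [n]\to [n+1]$ by $\phi(x)=x$ for $x<u$ and $\phi(x)=x+1$ for $x\geq u$, and the deletion map $\psi\colon [n+1]\to[n]$ by $\psi(y)=y$ for $y<u$ and $\psi(y)=y-1$ for $y\geq u$. Unwinding the formulas for $A^+,B^+$ (using $b_j<u\leq b_{j+1}$ in the first case, and the symmetric inequality in the second) gives $A^+=\phi(A)$ and $B^+=\phi(B)$, while $Y^-=\psi(Y)$. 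Since $\phi$ is injective on $[n]$, we get the key identity $A^+\cap B^+=\phi(A\cap B)$. One should also record that $\psi\circ\phi=\mathrm{id}$ on $[n]$ and $\phi\circ\psi=\mathrm{id}$ on $[n+1]\setminus\{u\}$, whereas $\phi(\psi(u))=u-1\neq u$; the inserted position $u$ is the only place where the two maps fail to be mutually inverse.

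With this in hand I would prove the contrapositive-style statement by contradiction. Suppose some $w\in Y^-\cap A\cap B$, and write $w=\psi(y)$ for some $y\in Y$. The idea is to push $w$ back up through $\phi$: since $w\in A\cap B$, we have $\phi(w)\in\phi(A\cap B)=A^+\cap B^+$, so it suffices to show $\phi(w)\in Y$ in order to contradict $Y\cap A^+\cap B^+=\emptyset$. If $y\neq u$ then $\phi(w)=\phi(\psi(y))=y\in Y$, giving the contradiction immediately. The only remaining possibility is $y=u$, and this is the single edge case to handle with care.

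In that case $w=\psi(u)=u-1$, which is precisely the \emph{middle} element of the chosen component $C\in\mathcal X$: in the first case $C=a_i\,b_j\,a_{i+1}\ldots$ consists of consecutive integers, so $u-1=a_{i+1}-1=b_j$, and 2-stability of $A$ forces $b_j=a_i+1\notin A$, whence $b_j\notin A\cap B$; the second case is symmetric with $u-1=a_i\notin B$. This contradicts $w\in A\cap B$, so the case $y=u$ cannot occur, completing the proof. The main (and essentially only) obstacle is this $y=u$ edge case, where $\phi$ and $\psi$ are not inverse; it is resolved by observing that the newly created empty position $u$ sits next to the middle vertex of $C$, which lies in exactly one of $A,B$ by 2-stability and therefore never belongs to $A\cap B$.
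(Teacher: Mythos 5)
Your proof is correct. The paper offers no explicit argument for this observation (it is declared ``straightforward from the previous definitions''), and your formalization via the insertion/deletion shift maps $\phi,\psi$, with the single edge case $y=u$ handled by noting that $u-1$ is the middle element of the component $C$ and hence lies outside $A\cap B$ by $2$-stability, is exactly the argument the paper leaves implicit.
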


Notice that $Y^-$ is not $2$-stable if and only if $u-1,u+1\in Y$ or if $u-2,u\in Y$.
But if $X=A^+ \cup B^+$, then $\{u-2,u-1\}$ is a connected component of $\mathcal{X}$, and 
$u+1$ is the first element (in clockwise direction) in a connected component in $\mathcal{X}$.
\begin{observation}\label{rem:Y-stab}
Let $Y\in [n+1]^k_2$ and suppose that for every element $v\in Y\cap (A^+\cup B^+)$, $v$ is not the first element of a connected component $C\in \mathcal{X}$. Then $\{u-2,u+1\}\cap Y=\emptyset$ and $Y^-\in[n]^k_2$.
\end{observation}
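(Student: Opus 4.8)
The plan is to split the statement into its two assertions and discharge each using the structural facts recorded immediately before the observation. Recall that after the $+$ operation, with $X=A^+\cup B^+$, the pair $\{u-2,u-1\}$ forms a connected component of $\mathcal{X}$, the element $u+1$ opens a new component, and $u\notin X$ (it is exactly the inserted gap). First I would pin down that \emph{both} $u-2$ and $u+1$ are first elements (in the clockwise sense) of their respective components: $u+1$ is already noted to be one, and $u-2$ is the first element of $\{u-2,u-1\}$ because its counterclockwise neighbour $u-3=a_i-1$ lies outside $X$ (as $a_i$ is the first element of the original component $C$, this is unaffected by the $+$ operation, so $\{u-2,u-1\}$ is a maximal arc starting at $u-2$).

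For the first assertion, $\{u-2,u+1\}\cap Y=\emptyset$, I would argue by contradiction using the hypothesis. Both $u-2$ and $u+1$ belong to $A^+\cup B^+$. If $u-2\in Y$, then $u-2\in Y\cap(A^+\cup B^+)$ is the first element of a component, contradicting the hypothesis; hence $u-2\notin Y$, and the identical argument gives $u+1\notin Y$.

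For the second assertion I would invoke the criterion stated just before the observation: $Y^-$ fails to be $2$-stable precisely when $u-1,u+1\in Y$ or when $u-2,u\in Y$. Since the first part already rules out $u+1\in Y$ and $u-2\in Y$, neither disqualifying configuration can occur, so $Y^-$ is $2$-stable. It then remains only to confirm $|Y^-|=k$ and $Y^-\subseteq[n]$: the contraction $y\mapsto y^-$ can collide only on the pair $u-1,u$, which cannot both lie in the $2$-stable set $Y$, so the map is injective on $Y$, and its image stays within $[1,n]$. Together these give $Y^-\in[n]_2^k$.

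The only genuinely delicate point is the index bookkeeping that identifies $u-2$ and $u+1$ as first elements of components of $\mathcal{X}$ for $X=A^+\cup B^+$; once that is secured, both conclusions follow immediately from the hypothesis and the preceding $2$-stability criterion. I would therefore spend most care reconciling the indices (in the first case $u=a_{i+1}$, so $u-2=a_i$, $u-1=b_j$, and $u+1$ is the shifted $a_{i+1}$) and checking that the predecessor $u-3$ and the gap $u$ really sit outside $X$, which is what makes $\{u-2,u-1\}$ a maximal arc and $u+1$ the start of the next one; the second case $C=b_j\,a_i\,b_{j+1}\ldots$ with $u=b_{j+1}$ is entirely symmetric under swapping the roles of $A$ and $B$.
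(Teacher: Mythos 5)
Your proposal is correct and takes essentially the same route as the paper, which presents this observation as an immediate consequence of the two facts stated just before it: the criterion that $Y^-$ fails to be $2$-stable only when $u-1,u+1\in Y$ or $u-2,u\in Y$, and the identification of $\{u-2,u-1\}$ as a connected component of $A^+\cup B^+$ with $u+1$ the first element of the following component. Your extra bookkeeping---checking that $u-3\notin A^+\cup B^+$ so that $u-2$ is genuinely a first element, and that the contraction $y\mapsto y^-$ is injective on $Y$---merely makes explicit what the paper leaves implicit.
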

Observation \ref{rem:Y-stab} is stated in such a convoluted way
to make easier the proof of the main theorem in this section, 
which uses successive applications of  $+$, together with a second operation
which is defined later in this section.

As Observation \ref{rem:Y-stab} assures that $\{u-2,u+1\}\cap Y=\emptyset$, we can study the relation between $Y^-\cap A$
and $Y\cap A^+$, and similarly with $B$, to obtain the following.
\begin{observation}\label{rem:cap+1}
Let $Y\in [n+1]^k_2$ such that $u-2$ and $u+1$ are not in $Y$. If $Y^-$ is defined as above, then $Y^-\in [n]^k_2$ and $|Y\cap A^{+}|+|Y\cap B^{+}|=|Y^-\cap A|+|Y^-\cap B|$ if $u\notin Y$, or $|Y\cap A^{+}|+|Y\cap B^{+}|+1=|Y^-\cap A|+|Y^-\cap B|$ if $u\in Y$. Furthermore, if no element $v\in Y \cap (A^+\cup B^+)$ is the first element in a connected component $C$ of $A^+\cup B^+$, then no element $v\in Y^-\cap (A\cup B)$ is the first element in a connected component $C$ of $A\cup B$.
\end{observation}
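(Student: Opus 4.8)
The plan is to regard the operations $+$ and $-$ as mutually inverse relabelings of the cycle and to track, element by element, how membership in $A^{+}\cup B^{+}$ corresponds to membership in $A\cup B$. Write $\phi\colon [n]\to[n+1]$ for the insertion map, $\phi(x)=x$ if $x<u$ and $\phi(x)=x+1$ if $x\geq u$, so that $A^{+}=\phi(A)$, $B^{+}=\phi(B)$, the inserted (empty) slot satisfies $u\notin A^{+}\cup B^{+}$, and $\psi(\phi(X'))=X'$ for $X'=A\cup B$. The operation $Y\mapsto Y^{-}$ is exactly the map $\psi$ with $\psi(y)=y$ if $y<u$ and $\psi(y)=y-1$ if $y\geq u$; it coincides with $\phi^{-1}$ on $[n+1]\setminus\{u\}$, while $\psi(u)=u-1$. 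First I would settle $Y^{-}\in[n]_2^{k}$: since $Y$ is $2$-stable, $u-1$ and $u$ are never both in $Y$, so $\psi$ is injective on $Y$ and $|Y^{-}|=k$; and, as recalled just before Observation~\ref{rem:Y-stab}, $Y^{-}$ fails to be $2$-stable exactly when $\{u-1,u+1\}\subseteq Y$ or $\{u-2,u\}\subseteq Y$, both of which are excluded by the hypotheses $u+1\notin Y$ and $u-2\notin Y$.

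For the cardinality identity I would split on whether $u\in Y$. If $u\notin Y$, then $\psi$ restricts to a bijection $Y\to Y^{-}$ carrying $Y\cap A^{+}=Y\cap\phi(A)$ onto $Y^{-}\cap A$ and $Y\cap B^{+}$ onto $Y^{-}\cap B$, so the two sides are equal. If $u\in Y$, then $u$ contributes nothing to $|Y\cap A^{+}|+|Y\cap B^{+}|$ because $u\notin A^{+}\cup B^{+}$, whereas its image $\psi(u)=u-1$ equals $b_j$ in the first case (resp.\ $a_i$ in the second), which belongs to $B\setminus A$ (resp.\ $A\setminus B$) and so contributes $1$ to $|Y^{-}\cap A|+|Y^{-}\cap B|$; every other element of $Y$ matches as in the $u\notin Y$ case, giving the stated $+1$.

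Finally, for the \emph{furthermore} clause I read ``first element of a component'' as the clockwise-first one, i.e.\ a $v$ in the set with $v-1$ outside it. Given $w\in Y^{-}\cap(A\cup B)$, let $y\in Y$ be its unique $\psi$-preimage. If $y\neq u$, then $y=\phi(w)\in A^{+}\cup B^{+}$, so $y\in Y\cap(A^{+}\cup B^{+})$ and, by hypothesis, $y$ is not a first element, i.e.\ $y-1\in A^{+}\cup B^{+}$; in particular $y-1\neq u$ (as $u\notin A^{+}\cup B^{+}$), whence $\psi(y-1)=\psi(y)-1=w-1$ and $w-1=\psi(y-1)\in\psi(A^{+}\cup B^{+})=A\cup B$, showing $w$ is not a first element. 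If $y=u$, then $w=\psi(u)=u-1$ equals $b_j$ (resp.\ $a_i$), whose clockwise predecessor $a_i$ (resp.\ $b_j$) is the very first element of the split component $C$ and lies in $A\cup B$; hence again $w-1\in A\cup B$.

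The step I expect to be the only real obstacle is the bookkeeping at the deleted slot: away from $u$ the maps $\phi,\psi$ are straight shifts and everything is immediate, but the ``predecessor'' relation can fail to transport only across $u$, which is exactly where $y-1\neq u$ (forced by $y-1\in A^{+}\cup B^{+}$, and consistent with $u+1\notin Y$) and the explicit membership $u-2\in\{a_i,b_j\}\subseteq A\cup B$ coming from the component $C$ are used. The remaining point to watch is the cyclic wrap-around at the two ends of the arc, which is routine under the paper's mod-$n$ convention.
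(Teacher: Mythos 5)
Your proof is correct and follows exactly the route the paper intends: Observation~\ref{rem:cap+1} is stated without proof as an immediate consequence of the definitions of the $+$ and $-$ operations, and your element-by-element bookkeeping via the insertion/deletion maps $\phi$ and $\psi$ (including the special role of $\psi(u)=u-1\in\{a_i,b_j\}$ and the exclusion of the two $2$-stability failure patterns $\{u-1,u+1\}$ and $\{u-2,u\}$) is precisely the verification the paper leaves implicit.
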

Notice that if $u-2,u\in A$, and $Y\cap A^+=\emptyset$ then $u-2,u+1\not\in Y$ and $Y^-\cap A=\emptyset$. On the other hand, if $u-1\in A$, $Y\cap A^+=\emptyset$ and
$u\not\in Y$, then $u-1,u\not\in Y$ and $Y^- \cap A=\emptyset$. This yields the following.
\begin{observation}\label{rem:AY+empty}
Let $Y\in [n+1]^k_2$. If $Y\cap A^+=\emptyset$ and $u\not\in Y$, then $Y^-\in[n]_2^k$ and $A\cap Y^- =\emptyset$.
\end{observation}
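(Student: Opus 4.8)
The plan is to verify the two conclusions separately, keeping in mind throughout the two possible configurations of the operation $+$: the \emph{first case} $C=a_i\,b_j\,a_{i+1}\ldots$, where $u=a_{i+1}$ and one checks $a_i=u-2$, $b_j=u-1$; and the \emph{second case} $C=b_j\,a_i\,b_{j+1}\ldots$, where $u=b_{j+1}$ and $a_i=u-1$, $b_j=u-2$. The whole argument rests on the bookkeeping of how the shift at position $u$ relates $A$ in $[n]$ to $A^+$ in $[n+1]$: an element $a\in A$ with $a<u$ is unchanged (so $a\in A^+$), while an element $a\in A$ with $a\ge u$ becomes $a+1$ (so $a+1\in A^+$). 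Both hypotheses $Y\cap A^+=\emptyset$ and $u\notin Y$ will be used.

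First I would show $Y^-\in[n]_2^k$. Since $u\notin Y$, the map $y\mapsto y^-$ cannot collapse two elements (the only possible collision is $u-1,u\in Y$, which is excluded), so $|Y^-|=k$. For $2$-stability, recall the characterization stated just before Observation~\ref{rem:Y-stab}: $Y^-$ fails to be $2$-stable if and only if $\{u-1,u+1\}\subseteq Y$ or $\{u-2,u\}\subseteq Y$. The hypothesis $u\notin Y$ rules out the second option immediately. For the first, I would observe that in the first case $u+1=a_{i+1}+1\in A^+$, while in the second case $u-1=a_i\in A^+$; in either case one of $u-1,u+1$ lies in $A^+$, so $Y\cap A^+=\emptyset$ forces $\{u-1,u+1\}\not\subseteq Y$. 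Hence $Y^-$ is $2$-stable and $Y^-\in[n]_2^k$.

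Next I would prove $A\cap Y^-=\emptyset$ by a uniform direct argument that needs no case split on the configuration. Suppose toward a contradiction that $a\in A\cap Y^-$, say $a=y_r^-$ for some $r$. If $y_r<u$, then $y_r^-=y_r$, so $a=y_r\in Y$ with $a<u$; since $a\in A$ and $a<u$ is fixed by $+$, we get $a\in A^+$, whence $a\in Y\cap A^+$, contradicting the hypothesis. If $y_r\ge u$, then because $u\notin Y$ we in fact have $y_r>u$, so $a=y_r-1\ge u$; since $a\in A$ with $a\ge u$ shifts to $a+1\in A^+$, and $a+1=y_r\in Y$, again $a+1\in Y\cap A^+$, a contradiction. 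Either way the assumption is untenable, so $A\cap Y^-=\emptyset$.

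The only delicate point, and the main bookkeeping obstacle, is the $2$-stability step: one must correctly track which of $u-1,u+1$ belongs to $A^+$ (as opposed to $B^+$), and this is precisely what distinguishes the first from the second configuration of the operation. Everything else follows routinely from the explicit shift formulas for $+$ and $-$; in particular, as noted above, the proof of $A\cap Y^-=\emptyset$ is genuinely case-free once those formulas are in hand.
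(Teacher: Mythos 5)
Your proof is correct and takes essentially the same approach as the paper: the paper's justification is exactly this bookkeeping with the shift formulas, split into the two configurations ($u-2,u\in A$ in the first case, $u-1\in A$ in the second) to show the relevant neighbours of $u$ are excluded from $Y$, combined with the stated criterion for when $Y^-$ fails to be $2$-stable. Your only (cosmetic) departure is that you make the argument for $A\cap Y^-=\emptyset$ uniform in the two cases, whereas the paper folds it into the same case split.
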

Finally, let $Y_1,Y_2\in[n+1]_2^k$
and consider $Y_1^-\cap Y_2^-$. If $\{u-1,u\}\not\subset Y_1\cup Y_2$, then
$y\in Y_1\cap Y_2$ if and only if $y^-\in Y_1^- \cap Y_2^-$ (where $y^-$ is as in the definition
of $Y^-$). Hence, we have the following
\begin{observation}\label{rem:Y1-capY2-}
Let $Y_1,Y_2\in[n+1]_2^k$.
If $u\not\in Y_1\cup Y_2$, then $|Y_1^-\cap Y_2^-|=|Y_1 \cap Y_2|$.
\end{observation}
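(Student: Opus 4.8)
The plan is to show that the deletion map $y \mapsto y^-$ acts as an injection once the single collision point is excluded, so that it preserves intersections. Recall that $y^-_r = y_r$ for $y_r < u$ and $y^-_r = y_r - 1$ for $y_r \geq u$. First I would package this as a map $\phi : [n+1]\setminus\{u\} \to [n]$ defined by $\phi(y)=y$ if $y<u$ and $\phi(y)=y-1$ if $y>u$, and verify that $\phi$ is a bijection: it fixes $\{1,\ldots,u-1\}$ pointwise and shifts $\{u+1,\ldots,n+1\}$ down by one onto $\{u,\ldots,n\}$, and these two images partition $[n]$. In particular $\phi$ is injective.

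Next, the hypothesis $u\notin Y_1\cup Y_2$ means $Y_1,Y_2\subseteq[n+1]\setminus\{u\}$, so every element of $Y_1$ and of $Y_2$ lies in the domain of $\phi$, and by the definition of the deletion we have $Y_i^-=\phi(Y_i)$ for $i\in\{1,2\}$. Since $\phi$ is injective, it commutes with intersection, i.e. $\phi(Y_1\cap Y_2)=\phi(Y_1)\cap\phi(Y_2)$. Taking cardinalities and using injectivity on $Y_1\cap Y_2$ gives $|Y_1^-\cap Y_2^-|=|\phi(Y_1\cap Y_2)|=|Y_1\cap Y_2|$, which is exactly the claim.

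The only point requiring care — and the reason the hypothesis is stated as it is — is the lone collision of the deletion map at the removed position: if $u$ were allowed, both $u-1$ and $u$ would be sent to $u-1$ (since $u\geq u$ forces $u^-=u-1$). This is precisely the degeneracy that could make $|Y_1^-\cap Y_2^-|$ exceed $|Y_1\cap Y_2|$, for instance when $u-1\in Y_1$ and $u\in Y_2$. The assumption $u\notin Y_1\cup Y_2$ rules this out, so no genuine obstacle remains and the argument reduces to the routine injectivity computation above. (I would note in passing that the weaker condition $\{u-1,u\}\not\subseteq Y_1\cup Y_2$ from the preceding discussion already suffices to avoid the collision, but the stated hypothesis is all that is needed here.)
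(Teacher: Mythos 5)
Your proof is correct and follows essentially the same route as the paper: the paper's preceding discussion observes that $y\in Y_1\cap Y_2$ if and only if $y^-\in Y_1^-\cap Y_2^-$ whenever $\{u-1,u\}\not\subset Y_1\cup Y_2$, which is precisely your injectivity argument for the deletion map, with the collision at $\{u-1,u\}$ being the only obstruction in both treatments. Your closing remark that the weaker hypothesis suffices is exactly what the paper's discussion establishes before specializing to $u\notin Y_1\cup Y_2$ in the stated observation.
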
 

\medskip
We are ready to introduce the second operation.
Notice that if $dist_n(A,B)\geq 4$, Corollary \ref{rem:no[ii]} implies that there exists a block $[t,t]$ of type $I$. 
The operation will work by adding an extra element at position $t+1$, thus increasing 
the size of the block.
To be more precise, we define an operation on $A$ and $B$, denoted $\uparrow$, by assigning the following two sets $A^{\uparrow}=\{a^{\uparrow}_1,\ldots,a^{\uparrow}_k\}$ and $B^{\uparrow}=\{b^{\uparrow}_1,\ldots,b^{\uparrow}_k\}$ in $[n+1]_2^k$ as follows:

\[a^{\uparrow}_i=\left\{\begin{array}{ll}
    a_i & \text{if }a_i\leq t-1; \\
    a_i+1 & \text{if }a_i\geq t+1.
\end{array}
\right.\quad\text{and}\quad b^{\uparrow}_i=\left\{\begin{array}{ll}
    b_i & \text{if }b_i\leq t-1; \\
    b_i+1 & \text{if }b_i\geq t+1.
\end{array}
\right.\]

Note that, if $X=A^{\uparrow}\cup B^{\uparrow}$ ($X\subseteq [n+1]$), then $[t,t+1]$ is a block of type $I$ in $\overline{\mathcal{X}}$.

We define now the inverse operation of operation $\uparrow$. Given $Y\in [n+1]^k_2$, we define a set $Y^{\downarrow}=\{y^{\downarrow}_1,\dots,y^{\downarrow}_k\}\in [n]^k$ as follows:

\[y^{\downarrow}_r=\left\{\begin{array}{ll}
    y_r & \text{if }y_r\leq t; \\
    y_r-1 & \text{if }y_r\geq t+1.
\end{array}
\right.\]

As with the operation $+$, we care about when $Y^{\downarrow}$ is $2$-stable, and about
the relation between the intersections of $Y$ with $A^{\uparrow}$ and $B^{\uparrow}$, and
the intersection of $Y^{\downarrow}$ with $A$ and $B$.
Notice that if $Y\cap[A^{\uparrow}\cup B^{\uparrow}]=\emptyset$, $Y^{\downarrow}\in [n]^k_2$. Moreover, we have the following result.

\begin{lemma}\label{lemma:samecap}
Let $A,B\in [n]^k_2$ such that there exists a block $[t,t]$ of type $I$ in $\mathcal X$ with $X=A\cup B$.
Consider $A^{\uparrow}$ and $B^{\uparrow}$ defined as above. Let $Y\in [n+1]^k_2$ such that $Y\cap A^{\uparrow}\cap B^{\uparrow}=\emptyset$ and $Y^{\downarrow}$ defined as above. Then $Y^{\downarrow}\in [n]^k_2$, $Y^{\downarrow}\cap A\cap B=\emptyset$ and $|Y\cap A^{\uparrow}|+|Y\cap B^{\uparrow}|=|Y^{\downarrow}\cap A|+|Y^{\downarrow}\cap B|$.
\end{lemma}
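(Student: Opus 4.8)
The plan is to read both operations as an insertion/contraction pair governed by a single order-injection. Define $\varphi\colon [n]\setminus\{t\}\to[n+1]\setminus\{t,t+1\}$ by $\varphi(x)=x$ for $x\le t-1$ and $\varphi(x)=x+1$ for $x\ge t+1$; since $t\notin A\cup B$, the operation $\uparrow$ is exactly $A^{\uparrow}=\varphi(A)$ and $B^{\uparrow}=\varphi(B)$, and because $\varphi$ is injective, $A^{\uparrow}\cap B^{\uparrow}=\varphi(A\cap B)$. Dually, $\downarrow$ restricted to $[n+1]\setminus\{t,t+1\}$ is precisely $\varphi^{-1}$, while it collapses both $t$ and $t+1$ onto $t$. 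The one structural fact I would extract first is that, since $[t,t+1]$ is a block of type $I$ for $X=A^{\uparrow}\cup B^{\uparrow}$, its two end--neighbours satisfy $\{t-1,t+2\}\subseteq e(H)=A^{\uparrow}\cap B^{\uparrow}$; hence the hypothesis $Y\cap A^{\uparrow}\cap B^{\uparrow}=\emptyset$ forces $t-1\notin Y$ and $t+2\notin Y$. These two exclusions are the engine of the whole argument.

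Next I would prove $Y^{\downarrow}\in[n]^k_2$, which I expect to be the main obstacle. A loss of cardinality can only come from the single collision $\{t,t+1\}\mapsto t$, and this is impossible because $Y$ is $2$-stable, so $t$ and $t+1$ are not both in $Y$; thus $|Y^{\downarrow}|=k$. For $2$-stability, I would observe that contracting the pair $t,t+1$ is the only place where $\downarrow$ can shrink a cyclic gap, so a forbidden adjacency in $Y^{\downarrow}$ can arise in exactly one of two ways: the images of $t-1$ and $t+1$ becoming consecutive (which needs $t-1\in Y$), or the images of $t$ and $t+2$ becoming consecutive (which needs $t+2\in Y$). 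Both are excluded by the previous paragraph, so no two elements of $Y^{\downarrow}$ are cyclically consecutive and $Y^{\downarrow}$ is $2$-stable.

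The remaining two claims I would then obtain cheaply from $\varphi$. For $Y^{\downarrow}\cap A\cap B=\emptyset$, suppose $z$ lies in this set; since $z\in A\cap B$ we have $z\ne t$, so $z$ has a unique $\downarrow$-preimage in $Y$, namely $\varphi(z)$, and $z\in A\cap B$ gives $\varphi(z)\in A^{\uparrow}\cap B^{\uparrow}$, whence $\varphi(z)\in Y\cap A^{\uparrow}\cap B^{\uparrow}$, contradicting the hypothesis. For the counting identity I would prove the stronger termwise statements $|Y\cap A^{\uparrow}|=|Y^{\downarrow}\cap A|$ and $|Y\cap B^{\uparrow}|=|Y^{\downarrow}\cap B|$, which sum to the claim: on $A^{\uparrow}\subseteq[n+1]\setminus\{t,t+1\}$ the map $\downarrow$ acts as the injection $\varphi^{-1}$ and satisfies $\downarrow(A^{\uparrow})=A$, so it sends $Y\cap A^{\uparrow}$ injectively into $Y^{\downarrow}\cap A$; conversely each $z\in Y^{\downarrow}\cap A$ has $z\ne t$ and unique preimage $\varphi(z)\in Y\cap A^{\uparrow}$, giving surjectivity. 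Hence $\downarrow$ restricts to a bijection $Y\cap A^{\uparrow}\to Y^{\downarrow}\cap A$, and symmetrically for $B$, which finishes the proof. Throughout I read all indices modulo $n+1$, so the cyclic wrap-around near $t$ needs no separate treatment.
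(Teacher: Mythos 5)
Your proof is correct and takes essentially the same route as the paper's: both hinge on observing that $2$-stability of $Y$ forbids $\{t,t+1\}\subseteq Y$ and that $Y\cap A^{\uparrow}\cap B^{\uparrow}=\emptyset$ forces $t-1,t+2\notin Y$ (since these are the ends flanking the type-$I$ block), after which the contraction preserves $2$-stability, cardinality, and all relevant intersections. Your formalization via the order-injection $\varphi$ and the termwise bijections $Y\cap A^{\uparrow}\to Y^{\downarrow}\cap A$, $Y\cap B^{\uparrow}\to Y^{\downarrow}\cap B$ is simply a more detailed write-up of the paper's terse argument.
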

\begin{proof}
Consider $[t,t+1]$ the block of type $I$ defined as above. Since $Y\in [n+1]^k_2$, $|Y\cap\{t,t+1\}|\leq1$. In addition, from the fact that $Y\cap A^{\uparrow}\cap B^{\uparrow}=\emptyset$, $t-1$ and $t+2$ are not in $Y$. Thus $t-1$ and $t+1$ are not in $Y^{\downarrow}$. Therefore, $Y^{\downarrow}$ is in $[n]^k_2$, $Y^{\downarrow}\cap A\cap B=\emptyset$ and $|Y\cap A^{\uparrow}|+|Y\cap B^{\uparrow}|=|Y^{\downarrow}\cap A|+|Y^{\downarrow}\cap B|$.
\end{proof}

In particular, the following result follows immediately from Lemma \ref{lemma:samecap} if $Y\cap (A^{\uparrow}\cup B^{\uparrow})=\emptyset$.

\begin{corollary}\label{cor:dist2}
Let $[t,t]$ be a block of type $I$ in $\mathcal X$ with $X=A\cup B$. If $dist_{n+1}(A^{\uparrow},B^{\uparrow})=2$ then $dist_{n}(A,B)=2$.
\end{corollary}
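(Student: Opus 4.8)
The plan is to transport a common neighbor of $A^{\uparrow}$ and $B^{\uparrow}$ back down to a common neighbor of $A$ and $B$ via the inverse operation $\downarrow$, using Lemma \ref{lemma:samecap} to control both $2$-stability and the intersection sizes. The statement is essentially a one-step corollary, so the work is in verifying that the hypotheses line up.

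First I would use the hypothesis $dist_{n+1}(A^{\uparrow},B^{\uparrow})=2$ to fix a vertex $Y\in[n+1]^k_2$ adjacent to both $A^{\uparrow}$ and $B^{\uparrow}$ in $\operatorname{SG}(n+1,k)$. Since adjacency in a Schrijver graph means disjointness, this gives $Y\cap A^{\uparrow}=\emptyset$ and $Y\cap B^{\uparrow}=\emptyset$, hence $Y\cap(A^{\uparrow}\cup B^{\uparrow})=\emptyset$ and in particular $Y\cap A^{\uparrow}\cap B^{\uparrow}=\emptyset$, which is exactly the condition needed to invoke Lemma \ref{lemma:samecap}.

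Next I would apply Lemma \ref{lemma:samecap} to this vertex $Y$. It yields $Y^{\downarrow}\in[n]^k_2$ together with the identity $|Y\cap A^{\uparrow}|+|Y\cap B^{\uparrow}|=|Y^{\downarrow}\cap A|+|Y^{\downarrow}\cap B|$. Because $Y$ is disjoint from both $A^{\uparrow}$ and $B^{\uparrow}$, the left-hand side equals $0$, so $|Y^{\downarrow}\cap A|+|Y^{\downarrow}\cap B|=0$, forcing $Y^{\downarrow}\cap A=\emptyset$ and $Y^{\downarrow}\cap B=\emptyset$. Thus $Y^{\downarrow}$ is a vertex of $\operatorname{SG}(n,k)$ adjacent to both $A$ and $B$, which proves $dist_n(A,B)\leq 2$.

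Finally, to upgrade this to equality I would check that the distance cannot be smaller than $2$. The presence of the block $[t,t]$ of type $I$ means, by the definition of that type, that $t-1,t+1\in e(H)=A\cap B$, so $A\cap B\neq\emptyset$ and $A,B$ cannot be adjacent; moreover the hypothesis $dist_{n+1}(A^{\uparrow},B^{\uparrow})=2$ forces $A^{\uparrow}\neq B^{\uparrow}$ and hence $A\neq B$. Therefore $dist_n(A,B)\geq 2$, and combined with the upper bound we conclude $dist_n(A,B)=2$. I do not expect a genuine obstacle: the only points requiring care are that the hypothesis of Lemma \ref{lemma:samecap} holds (immediate once $Y$ is taken disjoint from $A^{\uparrow}$ and $B^{\uparrow}$) and that the intersection-preserving identity transfers disjointness downward, which is precisely what that lemma supplies.
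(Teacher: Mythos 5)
Your proof is correct and takes exactly the paper's route: the paper derives this corollary as an immediate consequence of Lemma \ref{lemma:samecap} applied to a common neighbor $Y$ with $Y\cap(A^{\uparrow}\cup B^{\uparrow})=\emptyset$, which is precisely your argument. Your additional verification of the lower bound $dist_n(A,B)\geq 2$ (via $t-1,t+1\in A\cap B$ and $A\neq B$) is a detail the paper leaves implicit, and it is handled correctly.
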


Let $Y_1,Y_2\in[n+1]_2^k$
and consider $Y_1^{\downarrow}\cap Y_2^{\downarrow}$. If $\{t,t+1\}\not\subset Y_1\cup Y_2$, then
$y\in Y_1\cap Y_2$ if and only if $y^{\downarrow}\in Y_1^{\downarrow}\cap Y_2^{\downarrow}$
(where $y^{\downarrow}$ is as in the definition of $Y^{\downarrow}$).
If $\{t,t+1\}\subset Y_1\cup Y_2$, then $t\in Y_1^{\downarrow}\cap Y_2^{\downarrow}$.
Hence we have the following.
\begin{observation}\label{rem:Y1downcapY2down}
If $Y_1,Y_2\in[n+1]_2^k$,
then $|Y_1^{\downarrow}\cap Y_2^{\downarrow}|\leq |Y_1\cap Y_2|+1$.
\end{observation}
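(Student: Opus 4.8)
The plan is to read off the claim from the behaviour of the inverse operation $\downarrow$ as a ``collapse'' of the two positions $t$ and $t+1$ to the single position $t$. Concretely, I would introduce the map $\phi\colon [n+1]\to[n]$ defined by $\phi(y)=y$ for $y\le t$ and $\phi(y)=y-1$ for $y\ge t+1$, so that $Y^{\downarrow}=\phi(Y)$ as a set. The two facts I would record first are: (i) $\phi$ is injective on $[n+1]\setminus\{t,t+1\}$, and both $t$ and $t+1$ are sent to $t$; and (ii) since each $Y_i$ is $2$-stable and $t,t+1$ are consecutive, we have $|Y_i\cap\{t,t+1\}|\le 1$, so $\phi$ restricts to an injection on each $Y_i$ and $Y_i^{\downarrow}$ is genuinely a $k$-set. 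Property (i) is the heart of the matter: the only way $\phi$ can identify two distinct points is by merging $t$ and $t+1$, so the intersection $Y_1^{\downarrow}\cap Y_2^{\downarrow}$ can gain at most one element that was not already present, namely the image $t$.

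Next I would run the case split already signalled in the paragraph preceding the statement. If $\{t,t+1\}\not\subseteq Y_1\cup Y_2$, then at most one of $t,t+1$ lies in $Y_1\cup Y_2$, and I claim $y\mapsto y^{\downarrow}$ is a bijection from $Y_1\cap Y_2$ onto $Y_1^{\downarrow}\cap Y_2^{\downarrow}$: for $z\ne t$ the preimage $\phi^{-1}(z)$ is a single point $w\notin\{t,t+1\}$, and $z\in Y_1^{\downarrow}\cap Y_2^{\downarrow}$ iff $w\in Y_1\cap Y_2$, while $t\in Y_1^{\downarrow}\cap Y_2^{\downarrow}$ forces (as $t+1\notin Y_1\cup Y_2$, say) $t\in Y_1\cap Y_2$. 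This gives $|Y_1^{\downarrow}\cap Y_2^{\downarrow}|=|Y_1\cap Y_2|$, which is well within the bound. In the complementary case $\{t,t+1\}\subseteq Y_1\cup Y_2$, the $2$-stability constraint forces one of $t,t+1$ into $Y_1$ and the other into $Y_2$, so $Y_1\cap Y_2\cap\{t,t+1\}=\emptyset$ while $t\in Y_1^{\downarrow}\cap Y_2^{\downarrow}$; here the bijection for $z\ne t$ still accounts for all of $Y_1\cap Y_2$, and the single extra element $t$ yields exactly $|Y_1^{\downarrow}\cap Y_2^{\downarrow}|=|Y_1\cap Y_2|+1$. In both cases the desired inequality holds.

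I expect the only delicate point to be verifying that no \emph{other} spurious coincidences are manufactured by the collapse, i.e. that for every $z\ne t$ the preimage under $\phi$ is unique and that $2$-stability prevents an element of $\{t,t+1\}$ from being double-counted inside a single $Y_i$. Once that is pinned down, the bound is forced purely by the fact that $\phi$ is at most $2$-to-$1$ and fails injectivity only at the pair $\{t,t+1\}$. It is worth noting for the sequel that this is precisely the bound needed to control how $|A\cap B|$ can grow under the inverse of $\uparrow$, complementing Observation~\ref{rem:Y1-capY2-} for the inverse of $+$, so that the successive applications used in the main theorem of this section only increase intersections in a controlled way.
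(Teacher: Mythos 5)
Your proposal is correct and takes essentially the same approach as the paper, whose justification is the paragraph immediately preceding the observation: the identical case split on whether $\{t,t+1\}\subseteq Y_1\cup Y_2$, with the remark that away from this pair the map $y\mapsto y^{\downarrow}$ puts $Y_1\cap Y_2$ in bijection with the new intersection, and that only the merged element $t$ can be gained. Your write-up simply formalizes this via the collapse map $\phi$ and spells out the injectivity and $2$-stability details that the paper leaves implicit.
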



Starting from $A^0 = A$ and $B^0 = B$ and by applying repeatedly  the operations $+$ and $\uparrow$, we are able to construct two vertices $A^p, B^p \in [n+p]_2^k$, with $p \leq m$, such that $\text{dist}_{n+p}(A^p,B^p) \leq 3$. Then, by applying repeatedly the operations $-$ and $\downarrow$, we obtain the following result. 

\begin{theorem}\label{th:boundm+3}
Let $n=3k-2-m$ with $1\leq m\leq k-4$. Then, $\operatorname{D}(\operatorname{SG}(n,k))\leq m+3$.
\end{theorem}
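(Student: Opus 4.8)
The plan is to prove Theorem~\ref{th:boundm+3} by induction on $m$, using the two lifting operations $+$ and $\uparrow$ to transport the pair $(A,B)$ from $\operatorname{SG}(n,k)$ up to $\operatorname{SG}(n+p,k)$ for some $p \leq m$, where the ground set is large enough that the distance is at most $3$, and then pulling the witnessing path back down with the inverse operations $-$ and $\downarrow$. Fix $A,B \in [n]_2^k$ and write $d = \text{dist}_n(A,B)$; I may assume $d \geq 4$, since otherwise there is nothing to prove. I would set $A^0 = A$, $B^0 = B$ and build a sequence of pairs $(A^i,B^i) \in [n+i]_2^k$ as follows: at each step, if the current pair is already at distance $\leq 3$ in $\operatorname{SG}(n+i,k)$ I stop; otherwise, by Corollary~\ref{rem:no[ii]} there is a block $[t,t]$ of type $I$ in $\mathcal{X}$ for the current pair, and I apply operation $\uparrow$ to enlarge that block, landing in $[n+i+1]_2^k$. (Operation $+$ is available as an alternative enlargement using a component $C \in \mathcal{X}$ with $|C| \geq 3$, whose existence is guaranteed by Observation~\ref{observation3}.3 whenever the distance exceeds $2$.) The key quantitative claim is that this process terminates after $p \leq m$ steps, i.e. that once we reach $\operatorname{SG}(3k-2,k) = \operatorname{SG}(n+m,k)$ the distance has dropped to at most $3$; this is exactly the regime $r \geq k-2$ covered by Theorem~\ref{th:diam3}, which gives $\operatorname{D}(\operatorname{SG}(3k-2,k)) = 3$, so the process must halt no later than step $m$.

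Once the lifted pair $(A^p,B^p)$ satisfies $\text{dist}_{n+p}(A^p,B^p) \leq 3$, I split into two cases according to the exact value of that distance. If $\text{dist}_{n+p}(A^p,B^p) = 2$, there is a common neighbour $Y \in [n+p]_2^k$ with $Y \cap (A^p \cup B^p) = \emptyset$; I would apply the inverse operations ($-$ or $\downarrow$, whichever matches the lifting used at each level) successively to $Y$, obtaining a vertex $Y' \in [n]_2^k$. Using Observations~\ref{rem:empty}, \ref{rem:cap+1}, \ref{rem:AY+empty} and Lemma~\ref{lemma:samecap}, each descent step preserves emptiness of the relevant triple intersections and controls how $|Y \cap A^{(i)}| + |Y \cap B^{(i)}|$ changes, so I can bound $\text{dist}_n(A,Y') + \text{dist}_n(Y',B)$; combined with Lemma~\ref{lemma:dist1+2h} applied to the intersection sizes $|A \cap Y'|$ and $|Y' \cap B|$ this yields $\text{dist}_n(A,B) \leq m+3$. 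If instead $\text{dist}_{n+p}(A^p,B^p) = 3$, I would invoke the construction of Rules $R1$--$R8$ at the top level $n+p$ to produce $A'^{(p)}, B'^{(p)}$ with $A^p \cap A'^{(p)} = B^p \cap B'^{(p)} = \emptyset$, then descend them to $A',B' \in [n]_2^k$ with $A \cap A' = B \cap B' = \emptyset$; the descent of the intersection $A'^{(p)} \cap B'^{(p)}$ is controlled by Observations~\ref{rem:Y1-capY2-} and \ref{rem:Y1downcapY2down}, which can only increase the intersection by one per $\downarrow$-step, so after $p \leq m$ descents $|A' \cap B'| \leq m-2$ or so, and Lemma~\ref{lemma:dist1+2h} (or rather its iterated form, Corollary~\ref{cor:2+2h}) converts this into the bound $\text{dist}_n(A,B) \leq 1 + m$.

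The main obstacle, and the place where I expect the real bookkeeping to live, is verifying that the hypotheses of the descent observations hold at every level of the inverse process. Observation~\ref{rem:Y-stab} and Lemma~\ref{lemma:samecap} require that the lifted witness $Y$ (or $A'^{(p)}, B'^{(p)}$) avoid the specific forbidden positions $\{u-2,u+1\}$ or $\{t,t+1\}$ created by each lift, and that no element of $Y \cap (A^{(i)} \cup B^{(i)})$ be the first element of a component. These conditions are precisely why Observation~\ref{rem:Y-stab} was phrased in its ``convoluted way'': the first-element-avoidance property is an invariant that must be shown to propagate through the entire chain of descents, and one must check that the new block introduced by each lift is of type $IV(H)$ (for $+$) or type $I$ (for $\uparrow$) so that the witness construction never places an element at a first-of-component position. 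I would therefore carry the invariant ``no chosen element is the first element of a component of the current $A^{(i)} \cup B^{(i)}$'' as an explicit inductive hypothesis down the descent, using the final clause of Observation~\ref{rem:cap+1} to propagate it, and only at the very end read off the distance bounds. The accounting of exactly how many $+$ versus $\uparrow$ steps were used (and hence the precise additive constant, $m+3$ versus $1+m$) is the delicate part, but it is bounded above by the total number of lifts $p \leq m$, which suffices for the stated inequality.
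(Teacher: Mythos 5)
Your overall skeleton---lift $(A,B)$ with $+$ and $\uparrow$ until the distance drops to at most $3$ (which Theorem~\ref{th:diam3} guarantees happens after at most $m$ lifts), then pull a witnessing configuration back down with $-$ and $\downarrow$---is the paper's. But there is a genuine gap exactly where you defer the work: you leave the schedule of operations unspecified (your default is $\uparrow$, with $+$ mentioned only as an available alternative), and you assert that the accounting of how many of each were used ``is bounded above by the total number of lifts $p\leq m$, which suffices for the stated inequality.'' It does not suffice. The two descent operations have asymmetric costs: a $\downarrow$ step can increase $|Y_1\cap Y_2|$ by one (Observation~\ref{rem:Y1downcapY2down}), and each unit of intersection costs \emph{two} units of distance, since Lemma~\ref{lemma:dist1+2h} only gives $\operatorname{dist}(Y_1^0,Y_2^0)\leq 1+2\,|Y_1^0\cap Y_2^0|$. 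So under a pure-$\uparrow$ schedule (where, note, the distance-$2$ case at the top cannot occur at all: chaining Corollary~\ref{cor:dist2} downward would force $\operatorname{dist}_n(A,B)=2$), the only case is distance $3$, the descent yields merely $|Y_1^0\cap Y_2^0|\leq p$, and you get $\operatorname{dist}_n(A,B)\leq 2+(1+2p)\leq 2m+3$, not $m+3$. Your own arithmetic in that case ($|A'\cap B'|\leq m-2$ ``or so'' yielding $\operatorname{dist}_n(A,B)\leq 1+m$) silently drops this factor of $2$.

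The missing idea is the paper's strict alternation: $A^{\ell}=(A^{\ell-1})^{+}$ for $\ell$ odd and $A^{\ell}=(A^{\ell-1})^{\uparrow}$ for $\ell$ even, with $p$ minimal such that the lifted distance is at most $3$. Starting with $+$ caps the number of $\uparrow$ lifts (hence of costly $\downarrow$ descents) at $p/2$, while the $-$ descents are free for the intersection count: the element $u$ inserted by $+$ forms a singleton block of type $IV(H)$, which the vertices built by rules $R1$--$R8$ avoid (Observation~\ref{rem:R1-R8}), so Observation~\ref{rem:Y1-capY2-} applies at every $-$ step. This gives $|Y_1^0\cap Y_2^0|\leq p/2$, hence $\operatorname{dist}_n(Y_1^0,Y_2^0)\leq 1+p$ and $\operatorname{dist}_n(A,B)\leq p+3\leq m+3$ in the distance-$3$ case. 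In the distance-$2$ case, minimality of $p$ together with Corollary~\ref{cor:dist2} forces the \emph{last} lift to be $+$, so $p$ is odd and, alternating Observation~\ref{rem:cap+1} (a $-$ step adds at most $1$ to $|Y\cap A|+|Y\cap B|$) with Lemma~\ref{lemma:samecap} (a $\downarrow$ step preserves that sum), the descended common neighbour satisfies $|Y^0\cap A|+|Y^0\cap B|\leq (p+1)/2$, and Corollary~\ref{cor:2+2h} gives $2+2(p+1)/2=p+3$. In short, the $+/\uparrow$ alternation is not deferrable bookkeeping; it is the mechanism that halves the number of costly steps and makes the additive constant come out to $m+3$ rather than roughly $2m+3$.
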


\begin{proof}
Let $A,B\in [n]_2^k$ and assume that $dist_n(A,B)\geq4$.
Corollary \ref{rem:no[ii]} and Observation \ref{observation3}.3 imply that both $+$ and $\uparrow$
can be applied. 
Let $A^0=A$, $B^0=B$, and for $1\leq \ell \leq p$ let
\[
A^\ell=\left\{\begin{array}{ll}
    \left(A^{\ell-1}\right)^+ & \text{if $\ell$ is odd}; \\
    \left(A^{\ell-1}\right)^{\uparrow} & \text{if $\ell$ is even}; \\
\end{array}\right.\quad\text{and}\quad B^\ell=\left\{\begin{array}{ll}
    \left(B^{\ell-1}\right)^+ & \text{if $\ell$ is odd}; \\
    \left(B^{\ell-1}\right)^{\uparrow} & \text{if $\ell$ is even}, \\
\end{array}\right.\]
where $p$ is the smallest integer such that $dist_{n+p}(A^p,B^p)\leq 3$.
Theorem \ref{th:diam3} implies that $p\leq m$.

Let $X^\ell=A^\ell \cup B^\ell$, and $\mathcal{X}^\ell$ be the family of connected 
components of $X^\ell$. 
We divide the proof in two cases: $dist_{n+p}(A^p,B^p)=2$ and $dist_{n+p}(A^p,B^p)=3$.

If $dist_{n+p}(A^p,B^p)=2$, there exists $Y\in [n+p]^k_2$ such that $Y\cap (A^p\cup B^p)=\emptyset$. Let $Y^p=Y$, and for $0\leq \ell \leq p-1$ let 
\[
Y^\ell=\left\{\begin{array}{ll}
    \left(Y^{\ell+1}\right)^- & \text{if $\ell$ is even}; \\
    \left(Y^{\ell+1}\right)^{\downarrow} & \text{if $\ell$ is odd}. \\
\end{array}\right.\]

From Corollary \ref{cor:dist2}, it follows that the last operation applied to $A$ and $B$ is $+$. Thus, $p$ is odd and the sets $A^p$ and $B^p$ are obtained after applying $\frac{p+1}{2}$ operations $+$ and $\frac{p-1}{2}$ operations $\uparrow$. Furthermore, Observations \ref{rem:empty},
\ref{rem:Y-stab} and \ref{rem:cap+1} imply $Y^{p-1}\cap A^{p-1}\cap B^{p-1}=\emptyset$, $Y^{p-1}\in [n+p-1]^k_2$ and $|Y^{p-1}\cap A^{p-1}|+|Y^{p-1}\cap B^{p-1}|\leq 1$. Furthermore, if 
$v\in Y^{p-1}\cap (A^{p-1}\cup B^{p-1})$, then $v$ is not the first
element in a connected component
of $X^{p-1}$.

Now, the sets $A^{p-1},B^{p-1}$ are obtained from operation $\uparrow$. Since $Y^{p-1}\cap A^{p-1}\cap B^{p-1}=\emptyset$, from Lemma \ref{lemma:samecap}, $Y^{p-2}\in [n+p-2]^k_2$ and $|Y^{p-2}\cap A^{p-2}|+|Y^{p-2}\cap B^{p-2}|=|Y^{p-1}\cap A^{p-1}|+|Y^{p-1}\cap B^{p-1}|\leq1$. Furthermore,
if $v\in Y^{p-2}\cap (A^{p-2}\cup B^{p-2})$, then $v$ is not the first
element in a connected component
of $X^{p-2}$.

Applying this reasoning repeatedly, we get that
 for $q\geq 1$, $Y^{p-2q-1}\in [n+p-2q-1]^k_2$ and $|Y^{p-2q-1}\cap A^{p-2q-1}|+|Y^{p-2q-1}\cap B^{p-2q-1})|\leq q+1$, and if 
$v\in Y^{p-2q-1}\cap (A^{p-2q-1}\cup B^{p-2q-1})$, then $v$ is not the first
element in a connected component
of $X^{p-2q-1}$.
Similarly, Lemma \ref{lemma:samecap} implies $Y^{p-2q-2}\in [n+p-2q-2]^k_2$, $|Y^{p-2q-2}\cap A^{p-2q-2}|+|Y^{p-2q-2}\cap B^{p-2q-2}|=|Y^{p-2q-1}\cap A^{p-2q-1}|+|Y^{p-2q-1}\cap B^{p-1}|\leq q+1$ and
if $v\in Y^{p-2}\cap (A^{p-2}\cup B^{p-2})$, then $v$ is not the first
element in a connected component
of $X^{p-2q-2}$.

Notice that letting $q=(p-1)/2$ we have $p-2q-1=0$. Then, $|Y^{0}\cap A^{0}|+|Y^{0}\cap B^{0}|\leq 1+(p-1)/2=(p+1)/2$. Then, Corollary \ref{cor:2+2h} implies $dist_n(A,B)\leq 2+2(p+1)/2=p+3\leq m+3$.

\bigskip

Assume now that $dist_{n+p}(A^p,B^p)=3$. From Observation \ref{rem:R1-R8} there exist $ (A^p)',  (B^p)'\in [n+p]^k_2$ constructed following the rules {\textbf{R1-R8}} such that $A^p\cap (A^p)'= (A^p)'\cap (B^p)'=B^p\cap (B^p)'=\emptyset$.

Let $Y_1^p=(A^p)'$ and $Y_2^p=(B^p)'$, and for every $0\leq \ell\leq p-1$, $i\in \{1,2\}$, let 
\[
Y_i^\ell=\left\{\begin{array}{ll}
    \left(Y_i^{\ell+1}\right)^- & \text{if $\ell$ is even}; \\
    \left(Y_i^{\ell+1}\right)^{\downarrow} & \text{if $\ell$ is odd}. \\
\end{array}\right.\]

Let $1\leq q \leq p$. Note that for every element $u$ added through the $+$ operation, $[u,u]$ is a block of type $IV(H)$ and from Observation \ref{rem:R1-R8}, $u\notin (A^p)'\cup\tilde (B^p)'$. Then Observation \ref{rem:AY+empty} and Lemma \ref{lemma:samecap} imply 
that $Y_1^{p-q}\cap A^{p-q}=\emptyset$, $Y_2^{p-q}\cap B^{p-q}=\emptyset$, and $Y_1^{p-q},Y_2^{p-q}\in[n+p-q]_2^k$. This implies that $Y_1^0,Y_2^0\in [n]_2^k$, $A\cap Y_1^0=B\cap Y_2^0=\emptyset$.  

Consider $Y_1^q\cap Y_2^q$. If $q$ is even, then $Y_1^q=\left(Y_1^{q+1}\right) ^-$,
$Y_2^q=\left(Y_2^{q+1}\right) ^-$, and the element added in the corresponding $+$
operation is not in $Y_1^{q+1}\cup Y_2^{q+1}$. Hence, Remark \ref{rem:Y1-capY2-} implies $|Y_1^q\cap Y_2^q|\leq |Y_1^{q+1}\cap Y_2^{q+1}|$. On the other hand, if $q$ is odd, Remark \ref{rem:Y1downcapY2down}
implies $|Y_1^q\cap Y_2^q|=|Y_1^{q+1}\cap Y_2^{q+1}|+1$.

Therefore, $|Y_1^0\cap Y_2^0|$ is bounded by the amount of $\uparrow$ operations applied.
But, as the first operation applied is $+$, the number of $\uparrow$ operations is at most
$p/2$. Thus, $|Y_1^0\cap Y_2^0|\leq p/2$ and Lemma \ref{lemma:dist1+2h} implies
$dist_n(Y_1^0,Y_2^0)\leq 1+2p/2=1+p$. As $A\cap Y_1^0=Y_2^0\cap B=\emptyset$, this means that $dist_n(A,B)\leq 1+p+2=p+3\leq m+3$ and the result follows.
\end{proof}


\subsubsection{A lower bound for $2k+2\leq n\leq 3k-3$}\label{sec:lowerbound}
In this section we will prove that for $n = 2k+r$ with $2\leq r\leq k-3$, the diameter of $\operatorname{SG}(n,k)$ verifies

$$\operatorname{D}(\operatorname{SG}(n,k))\geq 4.$$

\noindent For this, we provide two vertices $A^n_k,B^n_k\in [n]_2^k$ such that $dist_n(A^n_k,B_k^n)\geq4$ as follows.

Let $n=2k+r$ with $2\leq r\leq k-3$ and $t=k-3-r$. Let $A^n_k=\{1,3,5\}\cup\left(\cup_{i=0}^{t}\{7+2i\}\right)\cup\left(\cup_{j=1}^{r-1}\{7+2t+3j\}\right)$ and $B^n_k=\{1,3,6\}\cup\left(\cup_{i=0}^{t}\{8+2i\}\right)\cup\left(\cup_{j=1}^{r-1}\{8+2t+3j\}\right)$.
Note that $A^n_k\cap B^n_k=\{1,3\}$, every block in $\overline{\mathcal X}$ has cardinality $1$ and $|\overline{\mathcal X}|=k-1$. Then, $dist_n(A^n_k,B^n_k)\geq3$. Let $A',B'\in [n]_2^k$ such that $A'$ is a neighbor of $A^n_k$ and $B'$ is a neighbor of $B^n_k$ in $\operatorname{SG}(n,k)$, i.e. $A'\cap A^n_k=B'\cap B^n_k=\emptyset$.

Let $A^*=\cup_{i=0}^{t}\{7+2i-1\}$ and $A^{**}=\cup_{j=1}^{r-1}\{7+2t+3j-2,7+2t+3j-1\}$. Notice that $\overline{A^n_k}=\{2,4,n-1,n\}\cup A^*\cup A^{**}$. Analogously, if $B^*=\cup_{i=0}^{t}\{8+2i-1\}$ and $B^{**}=\cup_{j=1}^{r-1}\{8+2t+3j-2,8+2t+3j-1\}$, $\overline{B^n_k}=\{2,4,5,n\}\cup B^*\cup B^{**}$. Then,
\begin{itemize}
\item $A'\subseteq\overline{A^n_k}=\{2,4,n-1,n\}\cup A^*\cup A^{**}$,
\item $B'\subseteq\overline{B^n_k}=\{2,4,5,n\}\cup B^*\cup B^{**}$.
\end{itemize}

Since $A'$ is a $2$-stable set, $\left|A'\cap\left(A^*\cup A^{**}\right)\right|\leq t+1+r-1=k-3$. In the same way, $\left|B'\cap\left(B^*\cup B^{**}\right)\right|\leq k-3$.

Therefore, $\left|A'\cap\{2,4,n-1,n\}\right|\geq3$ and $\left|B'\cap\{2,4,5,n\}\right|\geq3$. These inequalities imply that $A'\cap B'\neq\emptyset$ and then $A'$ and $B'$ are not adjacent in $\operatorname{SG}(n,k)$.

Thus, $dist_n(A^n_k,B^n_k)\geq4$ if $2k+2\leq n\leq3k-3$.\\

\begin{corollary}
\label{cor:3k-3}
Let $n = 2k+r$ with $2\leq r\leq k-3$. Then $\operatorname{D}(\operatorname{SG}(n,k))\geq4$. In particular, $\operatorname{D}(\operatorname{SG}(3k-3,k))=4$, for $k \geq 5$.
\end{corollary}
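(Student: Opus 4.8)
The plan is to assemble Corollary \ref{cor:3k-3} from the two bounds that the surrounding text has already isolated, treating it as a bookkeeping combination of a general lower bound and the relevant upper bounds. The statement has two parts: the universal lower bound $\operatorname{D}(\operatorname{SG}(n,k)) \geq 4$ for all $n=2k+r$ with $2 \leq r \leq k-3$, and the exact value $\operatorname{D}(\operatorname{SG}(3k-3,k)) = 4$ for $k \geq 5$. The lower bound is precisely what the preceding \S\ref{sec:lowerbound} establishes: for every such $n$ the explicit pair $A^n_k, B^n_k$ is shown to satisfy $\operatorname{dist}_n(A^n_k, B^n_k) \geq 4$, and since the diameter is the maximum distance over all vertex pairs, this immediately gives $\operatorname{D}(\operatorname{SG}(n,k)) \geq 4$. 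So the first half of the corollary requires nothing beyond quoting the computation just completed.

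For the "in particular" claim, the strategy is to specialize to $r = k-3$, i.e.\ $n = 3k-3$, and match the lower bound against the upper bound delivered by Theorem \ref{th:boundm+3}. First I would observe that $n = 3k-3$ corresponds to $m$ satisfying $3k-2-m = 3k-3$, so $m=1$; the hypothesis $1 \leq m \leq k-4$ then reads $1 \leq 1 \leq k-4$, which holds exactly when $k \geq 5$. Theorem \ref{th:boundm+3} therefore yields $\operatorname{D}(\operatorname{SG}(3k-3,k)) \leq m+3 = 4$. Combining this with the lower bound $\operatorname{D}(\operatorname{SG}(3k-3,k)) \geq 4$ obtained by setting $r = k-3$ in the first part forces equality, $\operatorname{D}(\operatorname{SG}(3k-3,k)) = 4$.

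Since both ingredients are fully proved upstream, there is no genuine obstacle here — the only things to verify carefully are the index translations, namely that $r=k-3$ is admissible in the range $2 \leq r \leq k-3$ (true, as it is the right endpoint, and it also requires $k-3 \geq 2$, i.e.\ $k \geq 5$, consistent with the stated hypothesis), and that the same $n=3k-3$ lands at $m=1$ in the parametrization of Theorem \ref{th:boundm+3}. If I were to flag one subtlety, it is that both bounds silently demand $k \geq 5$ for the lower-bound construction to make sense (one needs $r \geq 2$, hence $k \geq 5$ when $r = k-3$) and for the upper-bound hypothesis $m \leq k-4$ to be nonvacuous; making the hypothesis $k \geq 5$ explicit in the statement is exactly what reconciles the two ranges. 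Thus the proof is a two-line synthesis: invoke the lower bound from \S\ref{sec:lowerbound}, invoke Theorem \ref{th:boundm+3} at $m=1$, and conclude by squeezing.
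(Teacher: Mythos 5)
Your proof is correct and follows exactly the paper's own argument: the lower bound is quoted from the construction of $A^n_k, B^n_k$ in the preceding subsection, and the equality at $n=3k-3$ comes from combining it with Theorem \ref{th:boundm+3} at $m=1$ (where $1 \leq m \leq k-4$ forces $k \geq 5$). Your explicit verification of the index translations and of where the hypothesis $k \geq 5$ enters is a welcome clarification of what the paper leaves implicit.
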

\begin{proof}
The lower follows from previous reasoning. From this lower bound and Theorem \ref{th:boundm+3} we have $\operatorname{D}(\operatorname{SG}(3k-3,k))=4$, for $k \geq 5$.
\end{proof}

\section{Case $n=2k+2$}
In order to study the diameter of $\operatorname{SG}(2k+2,k)$, we start by giving a full description
of the graph.

Given a vertex $A\in \operatorname{SG}(2k+2,k)$, let $\overline{\mathcal A}$ denote the family of connected components of the graph induced by $[n]\setminus A$ in $C_n$.
Notice that $\overline{\mathcal{A}}$ must have $k$ connected components. This implies that
it can have either one connected component of order $3$ and the rest of order $1$, or two connected
components of order $2$ and the rest of order $1$. Furthermore, in the later case,
the shortest path in $C_n$ between the components of order $2$ of $\overline{\mathcal{A}}$ (i.e. the minimum length of the two intervals on $C_n$ separating these two components) contains  $1\leq i \leq \lfloor k/2
\rfloor$ elements of $A$ and $i-1$ elements not in $A$. In this case we say that the components 
are separated by $i$ elements of $A$.
Thus, we define the following partition,
\begin{align*}
\mathcal{B}_3&=\{A\in \operatorname{SG}(2k+2,k)\,|\, \overline{\mathcal{A}} \text{ contains a component of order $3$}\}\\ 
\mathcal{B}_{2,i}&=\{A\in \operatorname{SG}(2k+2,k)\,|\, \overline{\mathcal{A}} \text{ contains two components of order $2$ separated by $i$ elements of $A$}\}.\\
\end{align*}
Notice that any vertex $A\in\mathcal{B}_3$ is characterized by the position of the first element $v$
of the component of order $3$ in $\overline{\mathcal{A}}$. We denote such a vertex as $A=A_{0,v}$. Thus, $|\mathcal{B}_3|=2k+2$.
Similarly, any vertex $A\in \mathcal{B}_{2,i}$ is characterized by the firsts elements, in the clockwise direction of $C_n$, $v$ and $v+2i+1$ of the connected components of $\overline{\mathcal{A}}$
of order $2$. Notice that if  $k$ is even  and $i=k$,
there are $k+1$ such pairs, and that otherwise there are $2k+2$ such pairs. Hence we have 
\begin{align*}
|\mathcal{B}_3|=&2k+2,\\
|\mathcal{B}_{2,i}|=&\begin{cases}
k+1&\text{if $k$ is even and $i=k/2$,}\\
2k+2&\text{otherwise.}
\end{cases}
\end{align*}

We continue by studying the neighborhood of each vertex.
Let $A_0^v=\{v-1,v+3,v+5,\ldots,v+2k-1\}\in \mathcal{B}_3$.
If $B$ is a vertex adjacent to $A_0^v$, then either $B$ contains exactly one element
in $\{v,v+1,v+2\}$ and every element in $\{v+4,v+6,\ldots, v+2k-2,v+2k\}$, or
$B$ contains both $v$ and $v+2$, and $k-2$ elements in $\{v+4,v+6,\ldots, v+2k-2,v+2k\}$.
Thus, the degree of $A_0^v$ is $3+\binom{k-1}{k-2}=k+2$.
If $B$ is contained in $\{v,v+2,v+4,v+6,\ldots, v+2k-2,v+2k\}$, then $B$ is in $\mathcal{B}_3$. More precisely,
if $v+2i$ is not in $B$ for $1\leq i \leq k+1$, then $B=A_0^{v+2i-1}$.  
On the other hand, if $C=\{v+1, v+4,v+6,\ldots, v+2k-2,v+2k\}$ is the other vertex adjacent to $A_0^v$, then $C\in \mathcal{B}_{2,1}$
because the connected components of order $2$ in $\overline{\mathcal{C}}$ are $\{v+2k+1,v\}$ and $\{v+2,v+3\}$.
As this is the only neighbor of $A_0^v$ in $\mathcal{B}_{2,1}$, we denote $A_1^v=\{v,v+2,v+4,v+6,\ldots, v+2k-2,v+2k\}$.
 Thus
\[
N(A_0^v)=\{A_0^{v+2i-1}\,|\,1\leq i \leq k+1\}\cup\{A_1^v\}.
\]

Consider now $A=\{v-1,v+2,v+4,\ldots, v+2i,v+2i+3,v+2i+5,\ldots,v+2k-1\}\in \mathcal{B}_{2,i}$, with $1\leq i \leq \lfloor k/2 \rfloor$. The fact that $\overline{\mathcal{A}}$ has $k$ connected components of
order at most $2$ implies that any vertex $B$ adjacent to $A$ must intersect $\overline{\mathcal{A}}$ in every connected component. Thus that are $4$ such vertices, depending on which element of the
connected components of order $2$ they contain. This means that $|N(A)|=4$.
Furthermore, these neighbors are 
\begin{align*}
B_1&=\{v-2,v+1,v+3,\ldots, v+2i-1,v+2i+2,v+2i+4,\ldots,v+2k-2\}\in\mathcal{B}_{2,i}\\
B_2&=\{v,v+3,v+5,\ldots, v+2i+1,v+2i+4,v+2i+6,\ldots,v+2k\}\in\mathcal{B}_{2,i}\\
B_3&=\{v,v+3,v+5,\ldots, v+2i-1,v+2i+2,v+2i+4,v+2i+6,\ldots,v+2k-2\}\\
B_4&=\{v-2,v+1,v+3,\ldots, v+2i+1,v+2i+4,v+2i+6,\ldots,v+2k\}\\
\end{align*}
Notice that $B_3\in \mathcal{B}_3$ if $i=1$, and $B_3\in \mathcal{B}_{2,i}$ otherwise.
Further, notice that $B_4\in \mathcal{B}_{2,i+1}$ if $i\neq \lfloor k/2\rfloor$. 
This means that given $1\leq i < \lfloor k/2 \rfloor$, every vertex in $\mathcal{B}_{2,i}$ has 
exactly one neighbor in $\mathcal{B}_{2,i+1}$. Thus,
for $1<i\leq \lfloor k/2 \rfloor$, we inductively denote vertex $A_i^v$ 
as the only neighbor of $A_{i-1}^v$ in $\mathcal{B}_{2,i}$.
Let $A_i^u=A=\{v-1,v+2,v+4,\ldots, v+2i,v+2i+3,v+2i+5,\ldots,v+2k-1\}$.
Notice that $B_1=A_i^{u-1}$ and $B_2=A_i^{u+1}$. Further, notice that
when $k$ is even and $i=(k/2)-1$, we have
\begin{align*}
A_{k/2}^u=&B_4\\
=&\{v-2,v+1,v+3,\ldots, v+2i+1,v+2i+4,v+2i+6,\ldots,v+2k-2\}\\
=&\{v-2,v+1,v+3,\ldots, v+k-2+1,v+k-2+4,v+k-2+6,\ldots,v+2k-2\}\\
=&\{v-2,v+1,v+3,\ldots, v+k-1,v+k+2,v+k+4,\ldots,v+2k-2\}\\
=&\{v+k-1,v+k+2,v+k+4,\ldots,v+2k-2,v-2,v+1,v+3,\ldots, v+k-3\}\\
=&\{v-2+k+1,v+1+k+1,v+3+k+1,\ldots,v+k-3+k+1,v+k-1+k+1,\\
&v+k+2+k+1,v+k+4+k+1,\ldots, v+2k-2+k+1\}\\
=&A_{k/2}^{u+k+1}.
\end{align*}
This means that $A_{k/2}^u$ plays the role of $B_4$ for both $A_{(k/2)-1}^u$ and $A_{(k/2)-1}^{u+k+1}$.
It also means that if $k$ is even, $i=k/2$ and $A=A_{k/2}^u$, then $B_4=A_{(k/2)-1}^{u+k+1}$.

We have only left to discuss $B_4$ when $k$ is odd and $i=(k-1)/2$.
Letting $A_{(k-1)/2}^u=A=\{v-1,v+2,v+4,\ldots, v+k-1,v+k+2,v+k+4,\ldots,v+2k-1\}$, 
we have
\begin{align*}
B_4=&\{v-2,v+1,v+3,\ldots, v+2i+1,v+2i+4,v+2i+6,\ldots,v+2k-2\}\\
=&\{v-2,v+1,v+3,\ldots, v+k,v+k+3,v+k+5,\ldots,v+2k-2\}\\
=&\{v+k,v+k+3,v+k+5,\ldots,v+2k-2,v-2,v+1,v+3,\ldots , v+k-2\}\\
=&\{v-1+k+1,v+2+k+1,v+4+k+1,\ldots,v+k-3+k+1,v+k-1+k+1,
\\
&v+k+2+k+1,v+k+4+k+1,\ldots , v+2k-1+k+1\}\\
=&A_{(k-1)/2}^{u+k+1}.
\end{align*}

We can now give a full description of $\operatorname{SG}(2k+2,k)$.
\begin{theorem}
The Schrijver graph $\operatorname{SG}(2k+2,k)$ has vertex set
$V=V_1 \cup V_2$, where 
\begin{align*}
V_1=&\{A_i^v\,|\, 0\leq i \leq \lfloor k/2 \rfloor  -1, 0 \leq v \leq 2k+1\},\\
V_2=&\{A_{\lfloor k/2\rfloor}^v\,|\,  0 \leq v \leq 2k+1 \text{ if $k$ is odd and } 0\leq v \leq k \text{ if $k$ is even}\},\\
\end{align*}
and edge set $E=E_1\cup E_2 \cup E_3$, where
\begin{align*}
E_1=&\big\{ \{A_0^v,A_0^{v+2\ell +1}\}\,|\,  0 \leq v \leq 2k+1, 0\leq \ell \leq k\big\},\\
E_2=&\big\{ \{A_i^v,A_i^{v+1}\}\,|\, 1\leq i \leq \lfloor k/2 \rfloor, 0 \leq v \leq 2k+1 \big\},\\
E_3=&\big\{ \{A_i^v,A_{i+1}^v\}\,|\, 1\leq i \leq \lfloor k/2 \rfloor -1, 0 \leq v \leq 2k+1\big\},\\
E_4=&\big\{ \{A_{\lfloor k/2 \rfloor}^v,A_{\lfloor (k-1)/2\rfloor}^{v+k+1}\}\,|\, 0 \leq v \leq 2k+1 \text{ if $k$ is odd and } 0\leq v \leq k \text{ if $k$ is even} \big\}.\\
\end{align*}
\end{theorem}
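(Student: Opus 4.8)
The plan is to read the theorem off from the structural analysis already carried out: it is a consolidation asserting that the parametrized sets $A_i^v$ enumerate $V(\operatorname{SG}(2k+2,k))$ up to the one stated identification, and that the neighbourhoods computed class by class are exactly the edge families $E_1,\dots,E_4$. I would split the argument into a vertex part and an edge part, keeping the parity of $k$ in view throughout, since it governs the behaviour at the top level $i=\lfloor k/2\rfloor$.

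For the vertex set I would begin from the count $|[n]\setminus A|=(2k+2)-k=k+2$. Since $A$ is $2$-stable, no two of its $k$ elements are consecutive on $C_n$, so the $k$ elements of $A$ split the cycle into exactly $k$ gaps, each of length at least $1$, whose lengths sum to $k+2$. Distributing the surplus $2$ over $k$ gaps of length $\geq 1$ leaves only two possibilities: a single gap of length $3$ with the rest of length $1$, or two gaps of length $2$ with the rest of length $1$. This is precisely the dichotomy defining $\mathcal{B}_3$ and $\bigcup_i\mathcal{B}_{2,i}$, so these classes partition $V$; combined with $|\mathcal{B}_3|=2k+2$ and the stated values of $|\mathcal{B}_{2,i}|$, this yields $V=V_1\cup V_2$, provided one checks that $v\mapsto A_i^v$ is a genuine labeling of each class. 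For $\mathcal{B}_3$ and for $1\le i\le\lfloor k/2\rfloor-1$ injectivity in $v$ is immediate from the distinguished position of the order-$3$ (resp.\ pair of order-$2$) complement component(s); at the top level the displayed identity $A_{k/2}^u=A_{k/2}^{u+k+1}$ for even $k$ shows the label is periodic with period $k+1$, which is exactly why $V_2$ restricts to $0\le v\le k$ in that case and keeps the full range for odd $k$.

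For the edge set I would exploit that the neighbourhood of every vertex has been determined completely, so that enumerating and classifying all neighbours gives soundness and completeness simultaneously. The computation $N(A_0^v)=\{A_0^{v+2\ell+1}:0\le\ell\le k\}\cup\{A_1^v\}$ supplies all intra-$\mathcal{B}_3$ edges, forming $E_1$, together with the edges $\{A_0^v,A_1^v\}$ joining level $0$ to level $1$. For a vertex $A_i^v$ with $1\le i<\lfloor k/2\rfloor$ the four neighbours $B_1,B_2,B_3,B_4$ fall into two at the same level $i$, namely $B_1=A_i^{v-1}$ and $B_2=A_i^{v+1}$, which generate $E_2$, plus one at level $i-1$ and one at level $i+1$; the edges to consecutive levels, indexed by their lower endpoint, form $E_3$ (the edges joining levels $0$ and $1$ being the index-$0$ members). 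Finally, at $i=\lfloor k/2\rfloor$ the fourth neighbour wraps, giving $A_{\lfloor k/2\rfloor}^v\sim A_{\lfloor(k-1)/2\rfloor}^{v+k+1}$, which is exactly $E_4$. Since each edge listed in $E_1\cup E_2\cup E_3\cup E_4$ has been verified to be a genuine adjacency and every adjacency of every vertex has been accounted for, these four families give all of $E$.

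I expect the main obstacle to be the uniform treatment of the top level $i=\lfloor k/2\rfloor$, where the fourth neighbour behaves differently according to the parity of $k$. For even $k$ it forces the identification $A_{k/2}^u=A_{k/2}^{u+k+1}$, shrinking $\mathcal{B}_{2,k/2}$ to $k+1$ vertices and sending the wrap edge down to level $\tfrac{k}{2}-1$, whereas for odd $k$ no vertices are identified and the wrap edge stays within the top level, shifting the label by $k+1$. Verifying that in both cases each top-level vertex still has degree exactly four, that its two adjacent-level edges ($B_3$ and $B_4$) are distinct, and that no boundary edge is either omitted or double counted across the two parities is the delicate point; the remaining verifications reduce to routine substitution into the explicit sets already displayed.
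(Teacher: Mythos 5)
Your proposal is correct and takes essentially the same approach as the paper: the paper's proof of this theorem is literally the single line ``the result follows from the preceding discussion,'' and your argument---classifying vertices by distributing the surplus $2$ among the $k$ gaps of the complement, enumerating $N(A_0^v)$ and the four neighbours $B_1,\dots,B_4$ of $A_i^v$, and handling the top level via the identification $A_{k/2}^u=A_{k/2}^{u+k+1}$ for even $k$ versus the wrap edge for odd $k$---is precisely a reconstruction of that preceding discussion.
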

\begin{proof}
The result follows from the preceding discussion.
\end{proof}
In order words, $\operatorname{SG}(2k+2,k)$ is the graph obtained as follows. Take the cartesian product $C_{2k+2}\square P_{\lfloor k/2 \rfloor +1}$. First, add edges between vertices that correspond to one end of $P_{\lfloor k/2 \rfloor +1}$ and are
at odd distance in $C_{2k+2}$. Next, take pairs of vertices that correspond to the other end of $P_{\lfloor k/2 \rfloor +1}$ and are at distance $k+1$. If $k$ is odd, add an edge between every pair of such vertices. If $k$ is even, identify each pair of those vertices. The resulting graph is $\operatorname{SG}(2k+2,k)$.

We can now give find the diameter of $\operatorname{SG}(2k+2,k)$. 
We begin by giving the diameter of some induced subgraphs.
\begin{proposition}\label{prop:diamsubgrafos}
\begin{enumerate}
\item The diameter of the subgraph of $\operatorname{SG}(2k+2,k)$
induced by $\mathcal{B}_3$ is $2$.
\item If $k$ is odd, the diameter of the subgraph of $\operatorname{SG}(2k+2,k)$
induced by $\mathcal{B}_{2,(k-1)/2}$
is $(k+1)/2$.
\item If $k$ is even, the diameter of the subgraph of $\operatorname{SG}(2k+2,k)$
induced by $\mathcal{B}_{2,k/2}$ is $(k/2)$.
\end{enumerate}
\end{proposition}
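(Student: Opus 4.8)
The plan is to prove each of the three parts by identifying the relevant induced subgraph with a well-understood graph, reading off its edges directly from the structural description of $\operatorname{SG}(2k+2,k)$ (the vertex classes $V_1,V_2$ and edge classes $E_1,E_2,E_3,E_4$ established just above), and then invoking the known diameter of that graph. The only real content is bookkeeping which global edges survive in each induced subgraph.

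For part~(1) I would first note that the vertices of $\mathcal{B}_3$ are exactly the $A_0^v$ with $0\le v\le 2k+1$, which I index by $\mathbb{Z}_{2k+2}$. Only edges of $E_1$ can have both endpoints in $\mathcal{B}_3$ (the classes $E_2,E_3,E_4$ involve layers $A_i$ with $i\ge 1$), and $E_1$ joins $A_0^v$ to $A_0^w$ precisely when $w-v$ is odd modulo $2k+2$. Hence the induced subgraph is the complete bipartite graph $K_{k+1,k+1}$, with the two sides given by the even- and odd-indexed vertices; since each side has $k+1\ge 2$ elements, two same-side vertices are nonadjacent but have a common neighbour, so the diameter is $2$.

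For parts~(2) and~(3) I would analyse the top layer $\mathcal{B}_{2,\lfloor k/2\rfloor}$, where the parity of $k$ changes the picture through the identification $A_{k/2}^{u}=A_{k/2}^{u+k+1}$ recorded above. In both cases the edges of $E_2$ restrict to the relation $A^v\sim A^{v+1}$, giving a cycle; the decisive difference is the location of the $E_4$-endpoints. When $k$ is odd, $\lfloor k/2\rfloor=\lfloor (k-1)/2\rfloor=(k-1)/2$, so each $E_4$-edge $\{A_{\lfloor k/2\rfloor}^v,A_{\lfloor (k-1)/2\rfloor}^{v+k+1}\}$ has \emph{both} endpoints in $\mathcal{B}_{2,(k-1)/2}$ and becomes the antipodal chord $\{v,v+k+1\}$ of the cycle $C_{2k+2}$ produced by $E_2$. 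Thus the induced subgraph is the Möbius ladder $M_{k+1}$ on $2(k+1)$ vertices, whose diameter is $\lceil (k+1)/2\rceil=(k+1)/2$ because $k+1$ is even. When $k$ is even, $\lfloor (k-1)/2\rfloor=(k/2)-1$, so every $E_4$-edge has exactly one endpoint in $\mathcal{B}_{2,k/2}$ and one in $\mathcal{B}_{2,(k/2)-1}$, hence is absent from the induced subgraph; after the identification $A_{k/2}^v=A_{k/2}^{v+k+1}$ the $E_2$-edges form a single cycle $C_{k+1}$ on the $k+1$ vertices $A_{k/2}^0,\dots,A_{k/2}^{k}$, whose diameter is $\lfloor (k+1)/2\rfloor=k/2$ since $k+1$ is odd.

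The diameter values for $K_{k+1,k+1}$ and $C_{k+1}$ are standard; for the Möbius ladder I would justify $\lceil (k+1)/2\rceil$ by observing that a shortest path uses at most one chord, whence $\operatorname{dist}(A^0,A^j)=\min\bigl(j,\,(k+2)-j\bigr)$ up to cyclic symmetry, and the maximum of this over $0\le j\le k+1$ equals $(k+1)/2$ when $k+1$ is even. The main obstacle is therefore not the three diameter computations but the careful tracking of which of $E_1$--$E_4$ survive in each induced subgraph, and in particular correctly handling the collapse $A_{k/2}^v=A_{k/2}^{v+k+1}$ in the even case, so that the $E_2$-edges are counted as a cycle on $k+1$ (rather than $2k+2$) vertices and the $E_4$-edges are seen to leave the class.
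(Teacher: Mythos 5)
Your proposal is correct and takes essentially the same approach as the paper: the paper states Proposition~\ref{prop:diamsubgrafos} without an explicit proof, as an immediate consequence of the structure theorem for $\operatorname{SG}(2k+2,k)$, and your argument is precisely that consequence made explicit. Your bookkeeping of which edges of $E_1,\dots,E_4$ survive in each induced subgraph, the resulting identifications with $K_{k+1,k+1}$, the M\"obius ladder on $2(k+1)$ vertices, and $C_{k+1}$ (the last after the identification $A_{k/2}^v=A_{k/2}^{v+k+1}$), and the corresponding diameter values $2$, $(k+1)/2$, $k/2$ are all accurate.
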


Consider now two vertices $A_i^v,A_j^u$, with $i\leq j$. Using item $1.$ of Proposition \ref{prop:diamsubgrafos} yields
\begin{align*}
\text{dist}(A_i^v,A_j^u)\leq & \text{dist}(A_i^v,A_0^v)+\text{dist}(A_0^v,A_0^u)+\text{dist}(A_0^u,A_j^u)\\
\leq  & i + 2 +j.
\end{align*}
If $k$ is odd, item $2.$ yields 
\begin{align*}
\text{dist}(A_i^v,A_j^u)\leq & \text{dist}(A_i^v,A_{(k-1)/2}^v)+\text{dist}(A_{(k-1)/2}^v,A_{(k-1)/2}^u)+\text{dist}(A_{(k-1)/2}^u,A_j^u)\\
\leq & \frac{k-1}{2}-i+\frac{k+1}{2}+\frac{k-1}{2}-j\\
=&\frac{3k-1}{2}-i-j\\
=&\left\lfloor \frac{3k}{2}\right\rfloor-i-j.
\end{align*}
If $k$ is even, item $3.$ yields
\begin{align*}
\text{dist}(A_i^v,A_j^u)\leq & \text{dist}(A_i^v,A_{k/2}^v)+\text{dist}(A_{k/2}^v,A_{k/2}^u)+\text{dist}(A_{k/2}^u,A_j^u)\\
\leq & \frac{k}{2}-i+\frac{k}{2}+\frac{k}{2}-j\\
=&\frac{3k}{2}-i-j\\
=&\left\lfloor\frac{3k}{2}\right\rfloor-i-j.
\end{align*}
Thus, we have
\[
\operatorname{D}(\operatorname{SG}(2k+2,k))\leq \max_{0\leq i,j\leq \lfloor k/2\rfloor} \min \left\lbrace i+j+2,\left\lfloor\frac{3k}{2}\right\rfloor-i-j\right\rbrace.
\]
As $i+j+2$ and $\left\lfloor\frac{3k}{2}\right\rfloor-i-j$ are respectively an increasing linear function
and a decreasing linear function of $(i+j)$, which meet at
\begin{align*}
i+j+2=&\left\lfloor\frac{3k}{2}\right\rfloor-i-j\\
2(i+j)=&\left\lfloor\frac{3k}{2}\right\rfloor -2\\
i+j=&\frac{1}{2}\left\lfloor\frac{3k}{2}\right\rfloor -1.
\end{align*}
If $\lfloor 3k/2 \rfloor$ is even, then $(1/2)\lfloor 3k/2\rfloor=\lfloor 3k/4\rfloor$, and
\begin{align*}
\operatorname{D}(\operatorname{SG}(2k+2,k))\leq& \max_{0\leq i,j\leq \lfloor k/2\rfloor} \min \left\lbrace i+j+2,\left\lfloor\frac{3k}{2}\right\rfloor-i-j\right\rbrace \\
=&\left\lfloor\frac{3k}{4}\right\rfloor -1+2\\
=&\left\lfloor\frac{3k}{4}\right\rfloor +1.
\end{align*}

If $\lfloor 3k/2 \rfloor$ is odd, then $(1/2)\lfloor 3k/2 \rfloor=\lfloor 3k/4\rfloor +(1/2)$ is not an integer. 
This means that, when $\lfloor 3k/2 \rfloor$ is odd, $\max_{0\leq i,j\leq \lfloor k/2\rfloor} \min \left\lbrace i+j+2,\left\lfloor\frac{3k}{2}\right\rfloor-i-j\right\rbrace$ is either $\lfloor 3k/2\rfloor-i-j$ with $i+j=\lceil 3k/4\rceil -1$ or $i+j+2$ with $i+j=\lfloor 3k/4 \rfloor -1$. But 
\begin{align*}
\left\lfloor \frac{3k}{2}\right\rfloor - \left(\left\lceil\frac{3k}{4}\right\rceil -1\right)=&
\left\lfloor \frac{3k}{2}\right\rfloor - \left\lceil\frac{3k}{4}\right\rceil +1\\
\leq&\left\lfloor\frac{3k}{4}\right\rfloor +1,
\end{align*}
and
\begin{align*}
\left\lfloor\frac{3k}{4}\right\rfloor -1 +2=\left\lfloor\frac{3k}{4}\right\rfloor +1.
\end{align*}

Thus, in any case,
\begin{align*}
\operatorname{D}(\operatorname{SG}(2k+2,k))\leq& \max_{0\leq i,j\leq \lfloor k/2\rfloor} \min \left\lbrace i+j+2,\left\lfloor\frac{3k}{2}\right\rfloor-i-j\right\rbrace \\
=&\left\lfloor\frac{3k}{4}\right\rfloor +1.
\end{align*}

When $k\geq 2$ is even we need to consider two more upper bounds. Consider vertices $A_i^v$ and $A_j^u$ and notice that $A_0^v$ is either 
adjacent to $A_0^u$ or to $A_0^{u+k+1}$. Thus, let 
\[
\ell=\begin{cases}
0 & \text{ if $u-v$ is odd}\\
k+1 & \text{ if $u-v$ is even}
\end{cases}
\]
and notice that $A_0^v$ is adjacent to $A_0^{u+ \ell}$ and that $A_{k/2}^u=A_{k/2}^{u+\ell}$. Thus,
\begin{align*}
\text{dist}(A_i^v,A_j^u)\leq & \text{dist}(A_i^v,A_0^v)+\text{dist}(A_{0}^v,A_{0}^{u+\ell})+\text{dist}(A_{0}^{u+\ell},A_{k/2}^u)+\text{dist}(A_{k/2}^u,A_j^u)\\
\leq & i+1+\frac{k}{2}+\frac{k}{2}-j\\
=& k+i+1-j.
\end{align*}
Notice that, if $j-i=k-\lfloor 3k/4\rfloor -1$, then 
\begin{align*}
k+i+1-j\leq& k+1-\left(k-\left\lfloor \frac{3k}{4}\right\rfloor -1\right)\\
=\left\lfloor \frac{3k}{4}\right\rfloor.
\end{align*}

Finally, without loss of generality,, $0\leq j-i\leq k-\lfloor 3k/4\rfloor -2$ and  $u-v\leq k+1$. Then, we have
\begin{align*}
\text{dist}(A_i^v,A_j^u)\leq & \text{dist}(A_i^v,A_j^v)+\text{dist}(A_j^v,A_j^u)\\
\leq & j-i+u-v.
\end{align*}
Notice that, if $v$ and $u$ are chosen so that $\text{dist}(A_{k/2}^u,A_{k/2}^v)=k/2$, then
$u-v\leq (k/2)+1$. Thus, we get
\begin{align*}
j-i+u-v&\leq k-\left\lfloor \frac{3k}{4}\right\rfloor -2 + \frac{k}{2}+1\\
&=\left\lceil \frac{k}{4}\right\rceil +\frac{k}{2}-1\\
&\leq \left\lfloor \frac{k}{4}\right\rfloor +1 +\frac{k}{2} -1\\
&=\left\lfloor \frac{3k}{4}\right\rfloor.
\end{align*}
Thus, if the previous paths proposed have length at least $\left\lfloor \frac{3k}{4}\right\rfloor +1$,
this last path has length $\left\lfloor \frac{3k}{4}\right\rfloor$.
Therefore, we have
\[
\operatorname{D}(\operatorname{SG}(2k+2,k))\leq \begin{cases}
\left\lfloor \frac{3k}{4}\right\rfloor & \text{ if $k\geq 2$ is even}\\
\left\lfloor \frac{3k}{4}\right\rfloor +1&  \text{otherwise.}
\end{cases}
\]

In order to give two vertices at such distance, let 
\[
v=\begin{cases} 
\frac{k}{2} & \text{ if $k\equiv 0 \pmod 4$}\\
\frac{k+1}{2}+1 & \text{ if $k\equiv 1 \pmod 4$}\\
\frac{k}{2} +1 & \text{ if $k\equiv 2 \pmod 4$}\\
\frac{k+1}{2} & \text{ if $k\equiv 3 \pmod 4$}\\
\end{cases}.
\]
Notice that $\text{dist}(A_0^0,A_0^v)=\operatorname{D}(\mathcal{B}_{3})=2$, as $v$ is even, and that $\text{dist}(A_{\lfloor k/2 \rfloor}^0,A_{\lfloor k/2 \rfloor}^v)=\operatorname{D}(\mathcal{B}_{2,\lfloor k/2\rfloor })=\lfloor (k+1)/2 \rfloor$.
Take vertices $B_1=A_{\lfloor k/2 \rfloor}^0$ and $B_2=A_{\lfloor 3k/4 \rfloor -1- \lfloor k/2 \rfloor}^v$. 
Notice that if $\lfloor 3k/4 \rfloor -1- \lfloor k/2 \rfloor\geq 0$, then $k\geq 3$. 
It is easy to check that any path that goes from $B_1$ to $B_2$
and does not use vertices from $\mathcal{B}_3$ has length at least
\begin{align*}
\lfloor k/2 \rfloor - \left( \lfloor 3k/4 \rfloor - 1 -\lfloor k/2 \rfloor \right) + \lfloor (k+1)/2 \rfloor =& 
2 \lfloor k/2 \rfloor +1- \lfloor 3k/4 \rfloor + \lfloor (k+1)/2 \rfloor \\
=&\lfloor 3k/2 \rfloor - \lfloor 3k/4 \rfloor +1\\
\geq & \left\lfloor\frac{3k}{4}\right\rfloor +1.
\end{align*}
If $k$ is odd it is also easy to check that the shortest path using vertices from $\mathcal{B}_3$ has length
\begin{align*}
\lfloor k/2 \rfloor + \lfloor 3k/4 \rfloor -1- \lfloor k/2 \rfloor +2=&\left\lfloor\frac{3k}{4}\right\rfloor +1.
\end{align*}
On the other hand, if $k\geq 3$ is even, then a shortest path using vertices from $\mathcal{B}_3$ is given by
\[
A_{k/2}^0,A_{(k/2)-1}^{k+1},\ldots, A_1 ^{k+1},A_0^{k+1},A_0^v, A_1^v,\ldots, A_{\lfloor 3k/4 \rfloor -1- (k/2)}^v,
\]
which has length
\[
\frac{k}{2}+1+\left\lfloor \frac{3k}{4} \right\rfloor -1- \frac{k}{2}=\left\lfloor \frac{3k}{4} \right\rfloor.
\]
Hence, we have the following.
\begin{theorem}
\label{th:2k+2}
Let $k\geq 3$. The diameter of $\operatorname{SG}(2k+2,k)$ is $\left\lfloor\frac{3k}{4}\right\rfloor$ if $k$ is even
and $\left\lfloor \frac{3k}{4}\right\rfloor +1$ if $k$ is odd.
\end{theorem}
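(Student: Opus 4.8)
The plan is to combine the upper bound derived in the discussion preceding the theorem with a matching lower bound witnessed by the two vertices $B_1,B_2$ exhibited above. The structural theorem is what makes this manageable: it lets me work entirely with the ``cylinder'' coordinates $A_i^v$ (row index $i$ running along $P_{\lfloor k/2\rfloor+1}$, column index $v$ running around $C_{2k+2}$) rather than with the underlying $k$-subsets of $[2k+2]$, so the whole problem reduces to a metric computation on this decorated cylinder.

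For the upper bound I would show that any two vertices $A_i^v$ and $A_j^u$ (say $i\le j$) are joined by one of three routes. First, descend both endpoints to the $\mathcal{B}_3$ layer and cross it: length $\le i+j+2$, using $\operatorname{D}(\mathcal{B}_3)=2$ from Proposition \ref{prop:diamsubgrafos}. Second, ascend both to the far layer $\mathcal{B}_{2,\lfloor k/2\rfloor}$ and cross it: length $\le \lfloor 3k/2\rfloor - i - j$, using the far-layer diameter $\lfloor (k+1)/2\rfloor$. Third, when $k$ is even, exploit the identification $A_{k/2}^u=A_{k/2}^{u+k+1}$ to obtain the two extra shortcuts computed above. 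Taking the minimum of the first two routes and maximizing over $(i,j)$, the crossover at $i+j=\tfrac12\lfloor 3k/2\rfloor-1$ yields $\lfloor 3k/4\rfloor+1$; for even $k$ the folded shortcuts trim this to $\lfloor 3k/4\rfloor$.

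For the lower bound I would take $B_1=A_{\lfloor k/2\rfloor}^0$ and $B_2=A_{\lfloor 3k/4\rfloor-1-\lfloor k/2\rfloor}^v$ with $v$ chosen according to $k\bmod 4$ as above, so that $\text{dist}(A_0^0,A_0^v)=2$ and $\text{dist}(A_{\lfloor k/2\rfloor}^0,A_{\lfloor k/2\rfloor}^v)$ is maximal inside the far layer. The heart of the matter is to show every $B_1$--$B_2$ path is long, which I would handle by a dichotomy. A path avoiding $\mathcal{B}_3$ is confined to the bulk, where advancing one step in the column coordinate costs $1$ and the two endpoints are far apart in both the row coordinate (via the far-layer diameter) and the column coordinate; a path using $\mathcal{B}_3$ must first climb down from row $\lfloor k/2\rfloor$ and later climb back up, paying the full row cost twice plus the traversal of $\mathcal{B}_3$. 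Both tallies come out to $\lfloor 3k/4\rfloor$ for even $k$ and $\lfloor 3k/4\rfloor+1$ for odd $k$, meeting the upper bound.

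The main obstacle is the lower bound: I must rule out \emph{every} shortcut, which is delicate precisely because the two nonstandard edge sets $E_1$ (the odd-distance edges inside $\mathcal{B}_3$) and $E_4$ (the distance-$(k+1)$ edges, i.e.\ the folding at the far end) break the clean product metric of $C_{2k+2}\,\square\,P_{\lfloor k/2\rfloor+1}$. Controlling how these edges can be used---in particular showing they cannot be combined to beat both route tallies at once---is where the parity-dependent choice of $v$ and the case analysis on whether a path visits $\mathcal{B}_3$ are needed; everything else is the bookkeeping already carried out in the paragraphs preceding the statement, so the proof of the theorem itself is essentially the assembly of these two matching bounds.
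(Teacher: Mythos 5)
Your proposal is correct and follows essentially the same route as the paper: the paper's proof of Theorem~\ref{th:2k+2} is precisely the assembly of the preceding discussion, namely the cylinder description of $\operatorname{SG}(2k+2,k)$, the upper bound obtained by minimizing the $\mathcal{B}_3$-crossing route $i+j+2$ against the far-layer route $\left\lfloor\frac{3k}{2}\right\rfloor-i-j$ (with the two fold-shortcuts via $A_{k/2}^u=A_{k/2}^{u+k+1}$ trimming the bound to $\left\lfloor\frac{3k}{4}\right\rfloor$ when $k$ is even), and the lower bound witnessed by $B_1=A_{\lfloor k/2\rfloor}^0$ and $B_2=A_{\lfloor 3k/4\rfloor-1-\lfloor k/2\rfloor}^v$ with the same dichotomy on whether a path visits $\mathcal{B}_3$. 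Your identification of the delicate point (ruling out shortcuts through $E_1$ and $E_4$ in the lower bound) is apt, since this is exactly the step the paper dispatches with ``it is easy to check.''
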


\section{Conclusions}
In this article we found the exact value of $D\left(\operatorname{SG}(2k+r,k)\right)$ when $r\leq 2$ and when $r\geq k-3$.
We give both an upper and a lower bound for the remaining cases, but evidence suggest they can be improved.
Finding a lower bound for $3\leq r \leq k-4$ that is a function of $k$ would be quite interesting.

Based on Theorem \ref{th:2k+2}
one would think that for other small values of $r$ the bound from Theorem \ref{th:boundm+3} is not tight.
Furthermore, the evidence suggest that $D\left(\operatorname{SG}(2k+r,k)\right)$ is a non-increasing function of $r$.
\begin{conjecture}\label{conj:mono}
If $r\geq 1$, then $D\left(\operatorname{SG}(2k+r,k)\right)\geq D\left(\operatorname{SG}(2k+r+1,k)\right)$.
\end{conjecture}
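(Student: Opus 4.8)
The plan is to split the inequality $D\!\left(\operatorname{SG}(2k+r,k)\right)\ge D\!\left(\operatorname{SG}(2k+r+1,k)\right)$ according to where the two quantities are already pinned down. Write $D(r)$ for $D\!\left(\operatorname{SG}(2k+r,k)\right)$ with $k$ fixed. Theorems \ref{cor:s2diam2}, \ref{th:diam3} and Corollary \ref{cor:3k-3} give exact values $D(r)=2$ for $r\ge 2k-2$, $D(r)=3$ for $k-2\le r\le 2k-3$, and $D(k-3)=4$, while Observation \ref{rem:diamd2} and Theorem \ref{th:2k+2} give $D(1)=k$ and $D(2)=\lfloor 3k/4\rfloor+(k\bmod 2)$. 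On these ranges and across their common boundaries the values $2,3,4$ are already non-increasing as $r$ decreases, so every transition $r\to r+1$ with $r\ge k-3$ holds at once. The transition $r=k-4\to k-3$ is handled because the lower bound of Section \ref{sec:lowerbound} forces $D(k-4)\ge 4=D(k-3)$, and the transition $r=1\to 2$ reduces to the elementary inequality $k\ge\lfloor 3k/4\rfloor+(k\bmod 2)$, which is routine. Thus the first step I would carry out is this bookkeeping, reducing the conjecture to the transitions $r\to r+1$ with $2\le r\le k-5$ (an empty set for $k\le 6$, so the conjecture is already fully established there).

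Second, I would isolate precisely why the surviving transitions resist the numerical bounds. For $3\le r\le k-4$ one only has $D(r)\in[4,\,k-r+1]$ and $D(r+1)\in[4,\,k-r]$ from Theorem \ref{th:boundm+3} and Corollary \ref{cor:3k-3}, so comparing the lower bound of $D(r)$ with the upper bound of $D(r+1)$ merely yields the condition $4\ge k-r$, which fails in the interior of the range. The step $r=2\to 3$ is of the same flavour: $D(2)=\lfloor 3k/4\rfloor+(k\bmod 2)$ is known exactly, but only $D(3)\le k-2$ is available, and $k-2$ exceeds $\lfloor 3k/4\rfloor$ for large $k$. Hence, exactly as the concluding remark anticipates, over this range the conjecture is essentially equivalent to showing that the upper bound of Theorem \ref{th:boundm+3} is far from tight.

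Third, and substantively, I would attack these transitions directly through the cycle-expansion machinery used in the proof of Theorem \ref{th:boundm+3}, rather than through the numerics. The aim is to show that passing from $\operatorname{SG}(n,k)$ to $\operatorname{SG}(n+1,k)$ can only shorten distances: given $A,B\in[n+1]_2^k$ realizing $D\!\left(\operatorname{SG}(n+1,k)\right)$, produce $A^{-},B^{-}\in[n]_2^k$ by applying an inverse operation ($-$ or $\downarrow$) at a suitable common free position of $A\cup B$, together with a lift of any $\operatorname{SG}(n,k)$-geodesic from $A^{-}$ to $B^{-}$ back to a walk of the same length from $A$ to $B$ in $\operatorname{SG}(n+1,k)$. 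Observations \ref{rem:empty}, \ref{rem:cap+1}, Lemma \ref{lemma:samecap} and Observations \ref{rem:Y1-capY2-}, \ref{rem:Y1downcapY2down} already control how pairwise intersections change under one application of $-$ and $\downarrow$, which is exactly the data needed to preserve adjacency (disjointness of consecutive vertices) along the lift. If such a length-preserving lift exists, then $d_{n+1}(A,B)\le d_n(A^{-},B^{-})\le D\!\left(\operatorname{SG}(n,k)\right)$, and maximising over $A,B$ gives $D\!\left(\operatorname{SG}(n+1,k)\right)\le D\!\left(\operatorname{SG}(n,k)\right)$, closing the induction on $r$.

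The hard part will be the geodesic lift. A cardinality count gives $\left|V(\operatorname{SG}(n+1,k))\right|>\left|V(\operatorname{SG}(n,k))\right|$, so there is no surjective homomorphism $\operatorname{SG}(n,k)\to\operatorname{SG}(n+1,k)$; the natural insertion map $\sigma$ that shifts coordinates past a fixed empty slot is an injective homomorphism, hence contracts distances between ``old'' vertices, but tells us nothing about the genuinely new vertices of $\operatorname{SG}(n+1,k)$, so the ``more connected'' intuition cannot be packaged as a single morphism and the comparison must be made pairwise. The obstruction is that the operations $+$ and $\uparrow$ are tailored to the union $A\cup B$ of the specific endpoints and need not respect the intermediate vertices of an arbitrary geodesic: an edge $\{P_i,P_{i+1}\}$ of a geodesic in $\operatorname{SG}(n,k)$ may fail to remain an edge once the deleted coordinate is reintroduced. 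I therefore expect the crux to be either (i) choosing the contraction position uniformly enough that disjointness is automatic for every consecutive pair on the path, or (ii) replacing the verbatim lift by a detour that reinserts the coordinate at a bounded, telescoping cost mirroring the intersection bookkeeping of Theorem \ref{th:boundm+3}; forcing that cost to contribute nothing to the total length is the main technical risk.
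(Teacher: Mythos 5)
This statement is Conjecture~\ref{conj:mono}: the paper does \emph{not} prove it, but poses it as an open problem, precisely because the authors' machinery cannot establish monotonicity of the diameter in $n$. So there is no ``paper proof'' to compare against, and the only question is whether your proposal closes the conjecture. It does not. Your first two steps are correct but routine bookkeeping: the known exact values and bounds (Observation~\ref{rem:diamd2}, Theorems~\ref{cor:s2diam2}, \ref{th:diam3}, \ref{th:2k+2}, \ref{th:boundm+3}, Corollary~\ref{cor:3k-3}) dispose of all transitions except those inside the range where only $4\leq D\leq k-r+1$ is known, exactly as the paper's concluding discussion already observes. All the content is in your third step, and there the plan has a concrete hole that you flag but do not fill.

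Your reduction needs, for an \emph{arbitrary} pair $A,B\in[n+1]_2^k$ (you cannot choose which pairs realize the diameter, since the diameter is a maximum over all pairs), a free position $u\notin A\cup B$ whose deletion leaves both $A^-$ and $B^-$ $2$-stable. Since $u\notin A\cup B$, by Observation~\ref{rem:Y-stab} this is equivalent to: $u-1,u+1$ are not both in $A$ and not both in $B$, i.e.\ some block of $\overline{\mathcal X}$ has size at least $2$ or some singleton block has type $IV(H)$. (Granting such a $u$, your stated worry about lifting the geodesic is actually unfounded: the insertion map at slot $u$ is a homomorphism $\operatorname{SG}(n,k)\to\operatorname{SG}(n+1,k)$ carrying $A^-$ to $A$ and $B^-$ to $B$, so any $\operatorname{SG}(n,k)$-path lifts verbatim; the obstruction is upstream, at the contraction.) But such a $u$ need not exist. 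Take $k=9$, $n+1=23=2k+5$, which lies in your surviving range, and
\[
A=\{1,3,5,7,9,11,13,17,21\},\qquad B=\{1,3,6,10,14,16,18,20,22\}.
\]
Here all seven blocks are singletons of types $I$, $II(A)$, $III(B)$, $IV(A)$, $IV(B)$, so \emph{every} free position has both cycle-neighbours in $A$ or both in $B$, and no admissible contraction exists: your argument never starts. This particular pair happens to be at distance $3$ (take $A'=\{2,4,6,8,10,12,14,16,18\}$ and $B'=\{5,7,9,11,13,15,17,19,21\}$), so it does not threaten the conjecture itself---but nothing in your proposal detects that, and proving that \emph{all} contraction-immune pairs have distance at most $D(\operatorname{SG}(n,k))$ is a structural problem of essentially the same difficulty as the conjecture. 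Your fallback (ii), a detour whose cost telescopes to zero, is exactly what the paper's own machinery cannot deliver: each $\uparrow/\downarrow$ step may cost one unit of intersection (Observation~\ref{rem:Y1downcapY2down}), which is why Theorem~\ref{th:boundm+3} only yields $D\leq m+3$ and why monotonicity is left as a conjecture. As written, your proposal is a valid reduction to the hard range plus an unproven main step, not a proof.
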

If Conjecture \ref{conj:mono} is true, then $D\left(\operatorname{SG}(2k+r,k)\right)\leq \lfloor 3k/4\rfloor -(k\hspace*{-0.2cm}\mod 2)$ if $r\geq 2$, which is an improvement on the bound in Theorem \ref{teo-ppal} when $r\leq \lceil 3k/4 \rceil$.
This would also imply that there are more intervals where the diameter remains constant, as it happens when $k-2\leq r \leq 2k-3$ and when $r\geq 2k-2$.
Finding said intervals would be interesting.

Our results allow us to compute (exactly) the diameter of $\operatorname{SG}(n,k)$ for every $2\leq k\le6$ and $n\geq2k+1$. We summarize it in Table \ref{table:diam}. 
\begin{table}[h]
    \centering
    \begin{tabular}{|c|c|c|c|c|c|c|}
\hline
    $D\left(\operatorname{SG}(n,k)\right)=$ & $2$ & $3$ & $4$ & $5$ & $6$ & $7$\\ \hline \hline
 \multirow{2}{3em}{$k=2$} & $n\ge5$ & - & - & - & - & -\\
    & \color{blue}{Obs. \ref{rem:diamd2}, Th. \ref{cor:s2diam2}} &  &  &  &  &\\ \hline

 \multirow{2}{3em}{$k=3$} & $n\ge10$ & $7\le n\le9$ & -& -& - &-\\
    & \color{blue}{Th. \ref{cor:s2diam2}} & \color{blue}{Obs. \ref{rem:diamd2}, Th. \ref{th:diam3}} &  & &  & \\ \hline

\multirow{2}{3em}{$k=4$} & $n\ge14$ & $10\le n\le13$ & $n=9$& -&- &- \\
    & \color{blue}{Th. \ref{cor:s2diam2}} & \color{blue}{Th. \ref{th:diam3}} & \color{blue}{Obs. \ref{rem:diamd2}} & & & \\ \hline

\multirow{2}{3em}{$k=5$} &  $n\ge18$ &  $13\le n\le17$ & $n=12$ & $n=11$ & - & -\\
    &  \color{blue}{Th. \ref{cor:s2diam2}} & \color{blue}{Th. \ref{th:diam3}} & \color{blue}{Cor. \ref{cor:3k-3}} & \color{blue}{Obs. \ref{rem:diamd2}} & & \\ \hline

\multirow{2}{3em}{$k=6$} & $n\ge22$ & $16\le n\le21$ & $n=14,15$ & - & $n=13$ & - \\
    & \color{blue}{Th. \ref{cor:s2diam2}} & \color{blue}{Th. \ref{th:diam3}} & \color{blue}{Cor. \ref{cor:3k-3}, Th. \ref{th:2k+2}} & & \color{blue}{Obs. \ref{rem:diamd2}} & \\ \hline

\multirow{2}{3em}{$k=7$} & $n\ge26$ & $19\le n\le25$ & $n=18$ & $n=17$ & $n=16$ & $n=15$ \\
    & \color{blue}{Th. \ref{cor:s2diam2}} & \color{blue}{Th. \ref{th:diam3}} & \color{blue}{Cor. \ref{cor:3k-3}} & \color{blue}{(*)} & \color{blue}{Th. \ref{th:2k+2}} & \color{blue}{Obs. \ref{rem:diamd2}}\\ \hline
\end{tabular}   
    \caption{Diameter of $\operatorname{SG}(n,k)$ for $2\le k\le7$.\\  {\color{blue}{ (*) }}Computationally computed using SageMath.}
    \label{table:diam}
\end{table}
Notice that Theorem \ref{th:boundm+3} would suggest that $D\left(\operatorname{SG}(n,k)\right)-D\left(\operatorname{SG}(n+1,k)\right)\in \{0,1\}$, which is the case for $k\leq 5$. But Theorem \ref{th:2k+2} lets us compute $D\left(\operatorname{SG}(2k+1,k)\right)-D\left(\operatorname{SG}(2k+2,k)\right)$. When $k\geq 6$ we get
\begin{align*}
D\left(\operatorname{SG}(2k+1,k)\right)-D\left(\operatorname{SG}(2k+2,k)\right)=&k-\left(\left\lfloor\frac{3k}{4}\right\rfloor + (k\hspace*{-0.3cm}\mod 2)\right)\\
=&\left \lceil \frac{k}{4}\right\rceil -(k\hspace*{-0.3cm}\mod 2),\\
\geq& 2.
\end{align*}
This raises the question about whether another such gap could appear as $r$ moves through the interval $[2..k-3]$.
Based on our computations, we propose the following conjecture.
\begin{conjecture}
If $2\leq r \leq k-2$, then $D\left(\operatorname{SG}(2k+r,k)\right)-D\left(\operatorname{SG}(2k+r+1,k)\right)\in \{0,1\}$.
\end{conjecture}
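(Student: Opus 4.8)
Writing $n=2k+r$, the statement is equivalent to two one-sided inequalities, namely $D(\operatorname{SG}(n+1,k))\le D(\operatorname{SG}(n,k))$ (\textbf{monotonicity}) and $D(\operatorname{SG}(n,k))\le D(\operatorname{SG}(n+1,k))+1$ (\textbf{bounded jump}), for every $n$ with $2k+2\le n\le 3k-2$. The plan is to prove these two inequalities separately, reusing the operations $+,\uparrow$ and their inverses $-,\downarrow$ introduced for Theorem \ref{th:boundm+3}. First I would dispatch the endpoint cases: for $r\in\{k-3,k-2\}$ one has $D(\operatorname{SG}(2k+r,k))\in\{3,4\}$ by Theorem \ref{th:diam3} and Corollary \ref{cor:3k-3}, so the steps at $r=k-3$ and $r=k-2$ already have difference in $\{0,1\}$; the substance therefore lies in the range $2\le r\le k-4$, where Corollary \ref{cor:3k-3} guarantees $D\ge 4$, and hence, by Observation \ref{observation3}.3 and Corollary \ref{rem:no[ii]}, the operations apply to any diametral pair.

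For the bounded jump I would establish a \emph{single-operation distortion} lemma: $\text{dist}_n(A,B)\le \text{dist}_{n+1}(A^+,B^+)+1$ for a diametral pair $A,B$ of $\operatorname{SG}(n,k)$. Since $\text{dist}_n(A,B)\ge 4>2$, Observation \ref{observation3}.3 provides a component $C\in\mathcal X$ with $|C|\ge 3$ on which to perform one $+$ operation, producing $A^+,B^+\in[n+1]^k_2$; the lemma then yields $D(\operatorname{SG}(n,k))=\text{dist}_n(A,B)\le \text{dist}_{n+1}(A^+,B^+)+1\le D(\operatorname{SG}(n+1,k))+1$. To prove the lemma I would take a shortest $A^+$--$B^+$ path $Y_0,\dots,Y_d$ in $\operatorname{SG}(n+1,k)$ and pull it back with the $-$ operation, setting $Z_i=Y_i^-$, so that $Z_0=A$ and $Z_d=B$. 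Observations \ref{rem:empty}, \ref{rem:Y-stab} and \ref{rem:cap+1} keep each $Z_i$ a valid $2$-stable $k$-set, and by Observation \ref{rem:Y1-capY2-} consecutive $Z_i,Z_{i+1}$ remain disjoint as long as the inserted position $u$ (the unique vertex of the new type $IV(H)$ block $[u,u]$) is not used; the only places where the pulled-back walk can fail to be a path are the steps at which $u$ participates. The crux is to show these defects cost at most one extra edge in total: because $[u,u]$ is a single block, a shortest $A^+$--$B^+$ path should be choosable (or locally reroutable at unit cost) so that $u$ is touched essentially once.

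The monotonicity inequality $D(\operatorname{SG}(n+1,k))\le D(\operatorname{SG}(n,k))$ is precisely Conjecture \ref{conj:mono} restricted to this range, and it is the main obstacle. Homomorphism arguments cannot reach it: the ``insert an empty position'' maps $\sigma_u$ (for which $\sigma_u(A)=A^+$) are injective homomorphisms $\operatorname{SG}(n,k)\to\operatorname{SG}(n+1,k)$, so they only give $\text{dist}_{n+1}\le\text{dist}_n$ on their image and compare the diameters in the wrong direction; and since $|V(\operatorname{SG}(n,k))|<|V(\operatorname{SG}(n+1,k))|$ there is no surjective map, let alone homomorphism, from the smaller vertex set onto the larger one. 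A direct proof thus seems to require lower bounds on $D(\operatorname{SG}(2k+r,k))$ for intermediate $r$ that grow with $k$ and match the upper bounds of Theorem \ref{th:boundm+3} closely enough to interleave consecutive values. Indeed, combined with the exact value $D(\operatorname{SG}(2k+2,k))=\lfloor 3k/4\rfloor+(k\bmod 2)$ of Theorem \ref{th:2k+2}, the conjecture already forces $D(\operatorname{SG}(2k+3,k))\ge\lfloor 3k/4\rfloor+(k\bmod 2)-1$, far above the bound $4$ of Corollary \ref{cor:3k-3}. The plan for such a lower bound would be to generalise the extremal pair of Section \ref{sec:lowerbound}: for each $r$ choose $A,B$ whose union leaves $\overline{\mathcal X}$ made of many short odd blocks, and then track, along an arbitrary short walk, how the counts of type $I$ singleton blocks and type $IV(H)$ blocks accumulate (as quantified by Lemma \ref{ecuacion1}, Lemma \ref{lemma:dist1+2h} and Observation \ref{rem:Y1downcapY2down}), forcing any common-neighbour chain to be long.

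I expect the bounded-jump half to be within reach of the existing tools, the single-operation distortion lemma and its case analysis around the inserted position $u$ being the only real technical work. The monotonicity half is the genuine difficulty: absent a lower bound for $3\le r\le k-4$ that is a function of $k$ (the open problem flagged in the Conclusions), I see no short route, and in fact a complete proof of this half would simultaneously establish Conjecture \ref{conj:mono} throughout $2\le r\le k-2$.
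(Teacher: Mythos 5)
This statement is not proved in the paper at all: it is posed as an open conjecture in the Conclusions, supported only by the computations reported in Table \ref{table:diam} (note the entry marked as ``computationally computed using SageMath''). So there is no paper proof to compare against, and your proposal, by its own admission, is not a proof either --- it is a research plan. Your decomposition into monotonicity, $D(\operatorname{SG}(n+1,k))\le D(\operatorname{SG}(n,k))$, and bounded jump, $D(\operatorname{SG}(n,k))\le D(\operatorname{SG}(n+1,k))+1$, is the correct reformulation, and your diagnosis of the monotonicity half agrees with the paper's own framing: it is exactly Conjecture \ref{conj:mono} restricted to $2\le r\le k-2$, which the authors leave open precisely because no lower bound growing with $k$ is known for $3\le r\le k-4$. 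Conceding that half concedes the statement.

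The bounded-jump half is also not established by your sketch. The ``single-operation distortion lemma'' $\text{dist}_n(A,B)\le \text{dist}_{n+1}(A^+,B^+)+1$ does not follow from the paper's toolkit in the way you use it: Observations \ref{rem:empty}, \ref{rem:Y-stab}, \ref{rem:cap+1} and \ref{rem:Y1-capY2-} carry hypotheses tied to the specific pair $A^+,B^+$ (e.g.\ $Y\cap A^{+}\cap B^{+}=\emptyset$, or that no element of $Y\cap(A^+\cup B^+)$ is the first element of a component of $A^+\cup B^+$), and the intermediate vertices $Y_i$ of an arbitrary shortest $A^+$--$B^+$ path need not satisfy any of them. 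Consequently $Y_i^-$ may fail to be $2$-stable (whenever $u-1,u+1\in Y_i$ or $u-2,u\in Y_i$), and consecutive pull-backs may intersect at many steps, not ``essentially once''; your claim that all such defects can be repaired at a total cost of one edge is exactly the missing argument, and nothing in the paper supplies it. Indeed, the proof of Theorem \ref{th:boundm+3} deliberately avoids pulling back arbitrary paths: it waits until the lifted pair reaches distance at most $3$, builds the specific neighbors $A',B'$ via rules $R1$--$R8$ (which avoid the inserted positions by Observation \ref{rem:R1-R8}), and then pulls back only those, paying via Lemma \ref{lemma:dist1+2h} and Observation \ref{rem:Y1downcapY2down} --- a scheme that yields $p+3$ overall but no per-step distortion bound of $1$. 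So both halves of your plan have genuine gaps, and the statement remains open.
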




\end{document}